\newtheorem{theorem}{Theorem}[section]
\newtheorem{lem}[theorem]{Lemma}
\newtheorem{rem}[theorem]{Remark}
\newtheorem{cor}[theorem]{Corollary}
\newtheorem{Def}[theorem]{Definition}
\newtheorem{Ex}[theorem]{Example}
\newtheorem{Assumption}[theorem]{Assumption}
\let\originalleft\left
\let\originalright\right
\renewcommand{\left}{\mathopen{}\mathclose\bgroup\originalleft}
\renewcommand{\right}{\aftergroup\egroup\originalright}
\newcommand{\Addresses}{{
		\footnote{

			\noindent	 \textsuperscript{1,3}Department of Applied Sciences and Engineering, Indian Institute of Technology Roorkee-IIT Roorkee, Roorkee, 247667, India.

			\noindent  \textit{e-mail\textsuperscript{1}:} \texttt{sumit@as.iitr.ac.in.}
			
			\noindent  \textit{e-mail\textsuperscript{3}:} \texttt{jay.dabas@gmail.com.}

			\noindent \textsuperscript{2}Department of Mathematics, Indian Institute of Technology Roorkee-IIT Roorkee, Roorkee, 247667, India.\par\nopagebreak
			\noindent  \textit{e-mail\textsuperscript{2}:} \texttt{manilfma@iitr.ac.in, maniltmohan@gmail.com.}

			\noindent \textsuperscript{*}Corresponding author.

			\textit{Key words:} approximate controllability, impulsive functional differential inclusion, evolution family, mild solution, fixed point theorem.
			
			Mathematics Subject Classification (2010): 34K06, 34A12, 37L05, 93B05.

}}}
\begin{document}
	\title[Non-autonomous evolution inclusions in Banach spaces]{Existence and approximate controllability of non-autonomous functional impulsive evolution inclusions in Banach spaces\Addresses}
	\author [S. Arora, M. T. Mohan and J. Dabas]{S. Arora\textsuperscript{1}, Manil T. Mohan\textsuperscript{2}  and J. dabas\textsuperscript{3*}}
	\maketitle{}
	\begin{abstract}
		In this paper, we are concerned with the approximate controllability results for a class of impulsive functional differential control systems involving time dependent operators in Banach spaces. First, we show the existence of a mild solution for non-autonomous functional impulsive evolution inclusions  in  separable reflexive Banach spaces with the help of the evolution family and a generalization of the Leray-Schauder fixed point theorem for multi-valued maps. In order to establish sufficient conditions for the approximate controllability of our problem, we first consider a linear-quadratic regulator problem and obtain the optimal control in the feedback form, which contains  the resolvent operator consisting of duality mapping.  With the help of this optimal control, we prove the  approximate controllability of the linear system and hence derive sufficient conditions for the approximate controllability of our problem.  Moreover, in this paper, we rectify several shortcomings of the related works available in the literature, namely, proper identification of resolvent operator in Banach spaces, characterization of phase space in the presence of impulsive effects and lack of compactness of the operator $h(\cdot)\mapsto \int_{0}^{\cdot}\mathrm{U}(\cdot,s)h(s)\mathrm{d} s : \mathrm{L}^{1}([0,T];\mathbb{Y}) \rightarrow \mathrm{C}([0,T];\mathbb{Y}),$ where $\mathbb{Y}$ is a Banach space and $\mathrm{U}(\cdot,\cdot)$ is the evolution family, etc. Finally, we provide a concrete example to illustrate the efficiency of our results.
	\end{abstract}
	\section{Introduction}\label{intro}\setcounter{equation}{0}
	There are many evolutionary processes subject to abrupt changes in state occur at certain time instant, that abrupt changes can be well approximated as being in the form of impulses. These processes are characterized by impulsive differential equations, see for example \cite{GA,AM,TA}, etc. On the other hand, there are various chemical, physical or biological processes that are mathematically described by partial functional differential equations with infinite delay, for example, heat conduction in materials with fading memory, reaction-diffusion logistic models with infinite delay, and neural networks etc (cf. \cite{XL,LA,JW} and references therein). In fact, the theory of impulsive functional differential equations is seeking great attention by many researchers and has emerged as an important area of investigation in recent years. Recently, several authors studied the existence results for the impulsive functional differential equations involving finite and infinite delays, see for instance, \cite{SA,BK,XL, Eh,RY}, etc and references therein.
	
	Controllability is one of the fundamental concepts in mathematical control theory and engineering. The theory of controllability for the infinite dimensional control systems has been studied extensively, see for example, \cite{VB,BJK,P,EZ}, etc and references therein. In the  infinite dimensional setting, two basic concepts of controllability, namely exact controllability and approximate controllability are to be distinguished and have wide range of applications. In general, the concept of exact controllability seldom holds for the infinite dimensional control systems (cf. \cite{M,TRR,TR,EZ}). Therefore, it is essential to explore a weaker notion of controllability, known as approximate controllability. 
	The approximate controllability refers that the system can steer into an arbitrary small neighborhood of the final state. In the past, study of the approximate controllability for deterministic or stochastic evolution control systems with impulses and delays via fixed point theorems have produced excellent results, and the details can be found in various articles, see for example, \cite{SA,AB,RKG,M}, etc.
	
	There are many works reported on the approximate controllability of differential inclusions in Hilbert spaces, see for instance, \cite{AGK,ZL,VO,KR} etc. Grudzka et. al. \cite{AGK}, established sufficient conditions for the approximate controllability of the impulsive functional differential inclusions in Hilbert spaces. In \cite{ZL}, Liu et.al. examined the approximate controllability for control problems driven by a class of nonlinear evolution hemivariational inequalities in Hilbert spaces. Rykaczewski in \cite{KR}, investigated the approximate controllability of differential inclusions in Hilbert spaces via resolvent operator condition and fixed point theorem. 
	
	Recently, a few works \cite{AD,VV,QM,MY}, etc claimed the approximate controllability of  differential inclusions in Banach spaces via resolvent operator condition. But the resolvent operator defined in these works is valid only if the state space is a Hilbert space (see the discussions after the resolvent operator definition \eqref{opt:4.2}).   Moreover, some works examined the existence and controllability problems of the impulsive functional differential equations and inclusions with infinite delay, see for instance \cite{YKC,Eh,Er,VV,RY}, etc. The uniform norm is used to characterize the phase space in the case of non-impulsive systems or inclusions (cf. \cite{FUX,Ga}, etc).  The norm or seminorm considered for the phase space in the works \cite{YKC,VV}, etc (see application sections of \cite{Eh,Er,RY}, etc also) involves uniform norm (which is same as the non-impulsive case). However, the choice of such a norm or seminorm for the phase space in the impulsive case is not appropriate, for counter examples and more details, we refer \cite{GU}. Also the lower bound axiom appearing in the characterization of the phase space  holds true for  uniform norm, but it may not hold in the case of norms involving integrals (see \eqref{22} and \eqref{23} below, cf. \cite{VOj}). But in the work \cite{JRg} (see Chapter 3, Section 1, \cite{JRg}), the authors claim that it holds true for  norms or seminorms involving integrals, which may not be true in general. Furthermore, some of the works reported on differential inclusions, see for instance \cite{NIM,VV} etc,  claim that the operator \begin{align}\label{11}(\mathrm{G}h)(\cdot):= \int_{0}^{\cdot}\mathrm{U}(\cdot,s)h(s)\mathrm{d} s : \mathrm{L}^{1}([0,T];\mathbb{Y}) \rightarrow \mathrm{C}([0,T];\mathbb{Y}),\end{align} where $\mathbb{Y}$ is a Banach space,  is compact. This claim is not true in general (cf. Example 3.4, Chapter 3, \cite{LY}). 
	
	Motivated from the above discussions, in this work, we consider non-autonomous impulsive differential inclusions with infinite delay in Banach spaces. We first prove the existence of a mild solution of the controlled evolutionary inclusion problem \eqref{eqn:1.1} below and then establish sufficient conditions for the  approximate controllability of the same inclusion problem. Moreover, we have rectified the shortcomings observed in the works \cite{AD,VV,QM,MY}, etc and relaxed some assumptions on the nonlinear multi-valued map also. Our paper suitably modifies the phase space characterization  in the case of impulsive differential equations and inclusions with infinite delay by introducing the integral norm given in \eqref{norm} below, instead of uniform norm (also by avoiding the lower bound axiom). Furthermore, we also discuss the compactness of the operator defined in 	\eqref{11}, through Lemma \ref{lm3.2} for integrably bounded sequences in $\mathrm{L}^{1}([0,T];\mathbb{Y}) $.
	
	Let $\mathbb{X}$ be a separable reflexive Banach space  (having a strictly convex dual $\mathbb{X}^*$) and $\mathbb{U}$ be a separable Hilbert space. Let us consider the  following differential  inclusion:
	\begin{equation}\label{eqn:1.1}
	\left\{
	\begin{aligned}
	x'(t)&\in\mathrm{A}(t)x(t)+\mathrm{B}u(t)+\mathrm{F}(t,x_t), \ t\in J=[0,T], \\&\qquad\qquad  t\neq \tau_{k},\ k=1,\dots,m, \\
	\Delta x |_{t=\tau_{k}} &=I_{k}(x(\tau_{k})), \ k=1,\dots,m, \\
	x_{0}&=\phi \in \mathcal{B},
	\end{aligned}
	\right.
	\end{equation}
	where $ \{\mathrm{A}(t): t\in J\} $ is a family of linear operators (not necessarily bounded) on $\mathbb{X}$,   $\mathrm{B}: \mathbb{U} \rightarrow \mathbb{X}$ is a bounded linear operator, and the control function $u\in \mathrm{L}^{2}(J;\mathbb{U}).$ The impulses $ I_{k}: \mathbb{X} \rightarrow \mathbb{X}$ and $ \Delta x|_{t=\tau_{k}}=x(\tau_{k}^{+})-x(\tau_{k}^{-})$ with $0 = \tau_{0}<\tau_{1}< \dots \tau_{m}<\tau_{m+1}=T ,\;$ for $k=1,\ldots,m.$  The function $x_{t}:(-\infty, 0]\rightarrow\mathbb{X}$, $x_{t}(\theta)=x(t+\theta),$ belong to some phase space $\mathcal{B}$, which we will be specified in the next section. The nonlinear part $\mathrm{F}:J\times\mathcal{B}\to2^{\mathbb{X}}$ is a multi-valued function.
	
	The rest of the manuscript is structured as follows: Section \ref{pre} provides some fundamental definitions and preliminary results, which is useful in the later sections. In section \ref{sec3}, first we discuss the properties of the multi-valued  Nemytskii operator defined in \eqref{32} (Lemma \ref{lm3.1} and Lemma \ref{thm:3.1}). Next we prove an important result regarding  integrably bounded sequences (Lemma \ref{lm3.2} and Corollary \ref{cor:3.1}). In the same section, we prove one of our main results, which is about the existence of a mild solution to our problem \eqref{eqn:1.1} (Theorem \ref{thm3.2}). Section \ref{sec4} is devoted to the approximate controllability of the  non-autonomous impulsive differential inclusion with infinite delay given in \eqref{eqn:1.1}. In order to do this,  we first formulate the linear-quadratic regulator problem, obtain the existence of an optimal control (Theorem \ref{optimal}), and then we find the optimal control in the the feedback form (Lemma \ref{lem3.1}). Further, we provide Theorem \ref{thm4.2} to obtain the approximate controllability of the  single-valued non-autonomous linear control system \eqref{linear}. In the same section, we prove our main result on the approximate controllability of the differential  inclusion \eqref{eqn:1.1} and obtain sufficient conditions by using the resolvent operator condition. Finally, in section \ref{sec5}, we discuss a concrete example to demonstrate the application of the developed theory.  
	\section{Preliminaries}\label{pre}\setcounter{equation}{0}
As discussed in the introduction, $\mathbb{X}$ is a separable reflexive Banach space provided with the norm $\left\|\cdot\right\|_{\mathbb{X}},\ \mathbb{X}^*$ denotes its topological dual under the norm $\left\|\cdot\right\|_{\mathbb{X}^*}$ and the duality pairing between $\mathbb{X}$ and $\mathbb{X}^*$ is denoted by $\langle\cdot, \cdot\rangle$. Let $\mathbb{U}$ be a separable Hilbert space (identified with its own dual) equipped with the norm $\left\|\cdot\right\|_{\mathbb{U}}$ and the inner product $(\cdot, \cdot )$. The notation $\mathcal{L}(\mathbb{U},\mathbb{X})$ stands for the space of all bounded linear operators from $\mathbb{U}$ into $\mathbb{X}$ endowed with the operator norm $\left\|\cdot\right\|_{\mathcal{L}(\mathbb{U},\mathbb{X})}$. The space of all bounded linear operators defined on $\mathbb{X}$  is denoted by  $\mathcal{L}(\mathbb{X})$ with  the operator norm $\left\|\cdot\right\|_{\mathcal{L}(\mathbb{X})}$. By $\mathrm{PC}(J;\mathbb{X})$, we mean the space of all functions $x:J \rightarrow \mathbb{X}$ such that $x(\cdot)$ is continuous for $t\in J\backslash \{\tau_1,\ldots,\tau_m\}$ along with  $ x(\tau_{k}^{-})=x(\tau_k)$ and $x(\tau_{k}^{+})$ exists for all $k=1,\ldots,m,$ equipped with the norm $\left\|x\right\|_{\mathrm{PC}}= \sup\limits_{s\in J}\left\|x(s)\right\|_{\mathbb{X}}$.

	\subsection{Multi-valued map}
		Let us recall some preliminaries concerning multi-valued maps. For more detail we refer to \cite{Kd,Zd}. Let us given two topological vector spaces $Y$ and $Z$, a multi-valued map (or multifunction) $\Phi:Y\to 2^Z \backslash\{\varnothing\}$ (which is also denoted as $\Phi:Y\multimap Z$) is closed (convex) valued if $\Phi(y)$ is closed (convex) for all $y\in Y$. The multi-valued map $\Phi$ is upper semicontinuous (or u.s.c for short) if and only if $\Phi^{-1}(V):=\{y\in Y: \Phi(y)\subset V\}$ is an open subset of $Y$, for every open set $V\subset Z$. The multi-valued map $\Phi$ is  upper hemicontinuous (or u.h.c for short) if and only if $\Phi^{-1}(V)$ is a weakly open subset of $Y$ (that is, it is open in $Y_{w}$, more precisely, the space $Y$ equipped with weak topology), for every open set $V\subset Z$. If the multi-valued map $\Phi$ maps every bounded subset of $Y$ into a relatively compact set, then $\Phi$ is completely continuous. Moreover, a subset $V\subset Y$ is said to be weakly compact if it is compact in weak topology. The set $V$ is said to be relatively weakly compact if the closure of ${V}$ (in weak sense) is weakly compact.
	\begin{rem}\label{rem2.1}
			If a multi-valued map $\Phi:Y\multimap Z$ is completely continuous with nonempty compact values, then $\Phi$ is u.s.c. if and only if $\Phi$ has a closed graph, see \cite{Kd}.
	\end{rem}
	
		A closed valued multi-function $\Phi$  from an interval $[a, b]$ (equipped with Lebesgue $\sigma$-field) to a Banach space  $\mathbb{Y}$ is measurable, if the function given by $[a, b]\ni t\to d(x, \Phi(t)) := \inf \{\|x-z\|_{\mathbb{Y}}:z\in \Phi(t)\} $ is measurable for all $x\in\mathbb{Y}$ . A multi-valued mapping $\Phi: [a, b] \multimap\mathbb{Y}$ is integrably bounded, if there exists a function $\sigma\in\mathrm{L}^1([a,b];[0, +\infty))$ such that $$\left\|\Phi(t)\right\|_{\mathbb{Y}}:=\sup_{z\in\Phi(t)}\left\|z\right\|_{\mathbb{Y}}\le\sigma(t),\ \text{for a.e.}\ t\in[a,b].$$ A family $\mathcal{W}\subset\mathrm{L}^1([a,b];\mathbb{Y})$ is integrably bounded if $w:[a,b]\multimap\mathbb{Y}$, given by $w(t)=\{z(t):z\in\mathcal{W}\}$ is integrably bounded.
		
		By $S^1_{\Phi}$, we mean the set of all integrable selectors of the multi-valued map $\Phi:[a,b]\multimap\mathbb{Y}$, that is, $S^1_{\Phi}=\left\{f\in\mathrm{L}^1([a,b];\mathbb{Y}): f(t)\in\Phi(t), \ \text{a.e.}\right\}$. The set $S^1_{\Phi}$ may be empty, but if $\Phi$ is measurable and integrably bounded, then $S^1_{\Phi}\ne\varnothing$ (see, Section 2, Chapter 2, \cite{SH}). 
	We will use the following fixed point theorem to prove the existence results.
	\begin{theorem}(\cite{AG})\label{thm:2.1} Let $\mathbb{Y}$ be a Banach space, $\mathcal{K}\subset\mathbb{Y}$ be  nonempty, closed and convex with $0\in\mathcal{K}$, and  $\Phi:\mathcal{K}\multimap\mathcal{K}$ is  u.s.c with compact and convex valued, which maps each bounded set into a relatively compact set, then one of the following statements is true.
		\begin{itemize}
			\item [(i)] The set $\mathcal{D}:=\{x\in\mathcal{K}\ \vert\ x\in\kappa\Phi(x),\ \kappa\in(0,1)\}$ is unbounded.
			\item [(ii)] $\Phi$ has a fixed point, that is, there exists $x\in\mathcal{K}$ such that $x\in\Phi(x)$.
		\end{itemize}	
	\end{theorem}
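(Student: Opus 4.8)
The plan is to argue by the alternative: I assume that (i) fails, that is, that $\mathcal{D}$ is bounded, and then produce a fixed point so that (ii) holds. Since $\mathcal{D}$ is bounded, I can fix $R>0$ with $\mathcal{D}\subset B(0,R)$, the \emph{open} ball of radius $R$ (the openness will be crucial at the very end). The idea is to confine everything to a compact convex set on which a genuine multi-valued Schauder-type theorem applies, and then to use a radial retraction to transfer the resulting fixed point back to $\Phi$ itself.

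First I would introduce the bounded, closed, convex set $\mathcal{K}_R:=\mathcal{K}\cap\overline{B(0,R)}$ (note $0\in\mathcal{K}_R$) and the radial retraction $r:\mathbb{Y}\to\overline{B(0,R)}$ given by $r(x)=x$ for $\|x\|_{\mathbb{Y}}\le R$ and $r(x)=Rx/\|x\|_{\mathbb{Y}}$ for $\|x\|_{\mathbb{Y}}>R$. Because $\mathcal{K}$ is convex and contains $0$, for $\|x\|_{\mathbb{Y}}>R$ the point $r(x)=(R/\|x\|_{\mathbb{Y}})x+(1-R/\|x\|_{\mathbb{Y}})\cdot 0$ is a convex combination of $x\in\mathcal{K}$ and $0\in\mathcal{K}$, so $r$ maps $\mathcal{K}$ continuously into $\mathcal{K}_R$ and fixes $\mathcal{K}_R$ pointwise. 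Since $\Phi$ sends bounded sets to relatively compact sets, $\Phi(\mathcal{K}_R)$ is relatively compact, and by Mazur's theorem the set $\mathcal{C}:=\overline{\mathrm{conv}}\,(\Phi(\mathcal{K}_R)\cup\{0\})$ is compact and convex; moreover $\mathcal{C}\subset\mathcal{K}$, since $\mathcal{K}$ is closed and convex with $0\in\mathcal{K}$ and $\Phi(\mathcal{K}_R)\subset\mathcal{K}$.

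Next I would define $\Psi:\mathcal{C}\multimap\mathcal{C}$ by $\Psi(x):=\Phi(r(x))$. This is indeed a self-map: for $x\in\mathcal{C}\subset\mathcal{K}$ we have $r(x)\in\mathcal{K}_R$, hence $\Phi(r(x))\subset\Phi(\mathcal{K}_R)\subset\mathcal{C}$. As a composition of the continuous single-valued map $r$ with the u.s.c. multi-valued map $\Phi$, the map $\Psi$ is u.s.c. (indeed $\Psi^{-1}(V)=r^{-1}(\Phi^{-1}(V))$ is open whenever $V$ is open), and it inherits nonempty compact convex values from $\Phi$. Thus $\Psi$ satisfies the hypotheses of the multi-valued Schauder (Bohnenblust--Karlin / Kakutani--Fan--Glicksberg) fixed point theorem on the compact convex set $\mathcal{C}$, which yields an $x^*\in\mathcal{C}$ with $x^*\in\Psi(x^*)=\Phi(r(x^*))$.

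The final and most delicate step is to upgrade this into a fixed point of $\Phi$. I claim $\|x^*\|_{\mathbb{Y}}\le R$. If not, set $y^*:=r(x^*)=(R/\|x^*\|_{\mathbb{Y}})x^*$, so that $\|y^*\|_{\mathbb{Y}}=R$ and $y^*=\lambda x^*$ with $\lambda:=R/\|x^*\|_{\mathbb{Y}}\in(0,1)$. From $x^*\in\Phi(y^*)$ I get $y^*=\lambda x^*\in\lambda\Phi(y^*)$ with $y^*\in\mathcal{K}$, so $y^*\in\mathcal{D}$; but $\|y^*\|_{\mathbb{Y}}=R$ contradicts $\mathcal{D}\subset B(0,R)$ being the open ball. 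Hence $\|x^*\|_{\mathbb{Y}}\le R$, so $r(x^*)=x^*$ and therefore $x^*\in\Phi(x^*)$, establishing (ii). The main obstacle is the non-compactness of $\mathcal{K}_R$ in the infinite-dimensional setting: a fixed point theorem is not directly available on $\mathcal{K}_R$, and the whole argument hinges on using the complete continuity of $\Phi$ together with Mazur's theorem to manufacture the compact convex set $\mathcal{C}$, and then on the choice of the \emph{open} ball $B(0,R)$ to force the boundary contradiction.
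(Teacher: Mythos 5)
The paper does not prove this statement at all---it is quoted as a known result from Granas--Dugundji \cite{AG}---so there is no internal proof to compare against. Your argument is a correct, self-contained proof and is essentially the classical one for the multi-valued Leray--Schauder alternative: all the key points check out. In particular, $\mathcal{C}=\overline{\mathrm{conv}}\left(\Phi(\mathcal{K}_R)\cup\{0\}\right)$ is compact by Mazur's theorem and lies in $\mathcal{K}$; $\Psi=\Phi\circ r$ is a u.s.c.\ self-map of $\mathcal{C}$ with nonempty compact convex values (your identity $\Psi^{-1}(V)=r^{-1}(\Phi^{-1}(V))$ matches the paper's ``small preimage'' definition of u.s.c.), so Bohnenblust--Karlin applies; and the boundary argument is clean: if $\|x^*\|_{\mathbb{Y}}>R$ then $y^*=r(x^*)$ satisfies $y^*\in\lambda\Phi(y^*)$ with $\lambda=R/\|x^*\|_{\mathbb{Y}}\in(0,1)$ and $y^*\in\mathcal{K}$, hence $y^*\in\mathcal{D}$ with $\|y^*\|_{\mathbb{Y}}=R$, contradicting the strict containment $\mathcal{D}\subset B(0,R)$, which is available because $\mathcal{D}$ is assumed bounded. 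No gaps.
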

	\subsection{Evolution family} 
	The theory of evolution family is an important tool to study the existence of solution and approximate controllability of non-autonomous systems. Let us first provide the definition of evolution family. 
	\begin{Def}[\cite{P}]{\label{def2.1}} A family of bounded linear operators $\left\{\mathrm{U}(t,s):0\leq s\leq t\leq T\right\}$ is said to be an \emph{evolution family}, if   
		\begin{enumerate}
			\item [(1)] $\mathrm{U}(s,s)=\mathrm{I}$, $\mathrm{U}(t,r)\mathrm{U}(r,s)=\mathrm{U}(t,s),$ for $0\leq s\leq r \leq t \leq T$.
			\item [(2)] The mapping $(t,s)\mapsto \mathrm{U}(t,s)$ is strongly continuous, for $0\leq s\leq t\leq T$.
		\end{enumerate}
	\end{Def}
	To construct an evolution family, let us impose the following assumptions on the family of linear operators $\left\{\mathrm{A}(t): t\in J \right\}$  (see, Chapter 5, \cite{P}). 
	\begin{Assumption}\label{ass2.1}
		\begin{enumerate}
			\item [\textit{(R1)}] The linear operator $\mathrm{A}(t)$ is closed for each $t\in J$ and the domain $\mathrm{D}(\mathrm{A}(t))=\mathrm{D}(\mathrm{A})$ is dense in $\mathbb{X}$ and  independent of $t$. 
			\item [\textit{(R2)}] The resolvent operator $\mathrm{R}(\lambda,\mathrm{A}(t))$ for $t\in J $ exists for all $\lambda$ with $\text{Re }\lambda \geq 0$ and there exists $K> 0$ such that 
			\begin{align*}
			\left\|\mathrm{R}(\lambda,\mathrm{A}(t))\right\|_{\mathcal{L}(\mathbb{X})}\leq \frac{K}{\left|\lambda\right|+1}.
			\end{align*}
			\item [\textit{(R3)}] There exist constants $N>0$ and $0<\delta \leq 1$ such that 
			\begin{align*}
			\left\|(\mathrm{A}(t)-\mathrm{A}(s))\mathrm{A}^{-1}(\tau)\right\|_{\mathcal{L}(\mathbb{X})}\leq N\left|t-s\right|^{\delta}, \ \text{for all}\ t,s,\tau \in J.
			\end{align*}
			\item [\textit{(R4)}] The operator $\mathrm{R}(\lambda,\mathrm{A}(t)), t\in J$ is compact for some $\lambda \in \rho(\mathrm{A}(t))$, where $\rho(\mathrm{A}(t))$ is the resolvent set of $\mathrm{A}(t)$. 
		\end{enumerate}
	\end{Assumption}
	\begin{lem}[Theorem 6.1, Chapter 5, \cite{P}]\label{lem2.1}
		Suppose that  (R1)-(R3) hold true. Then there exists a \emph{unique evolution family} $\mathrm{U}(t,s)$ on $0\leq s\leq t\leq T$ satisfying:
		\begin{enumerate}
			\item [(1)] There exists a constant $C\geq1$ such that $\left\|\mathrm{U}(t,s)\right\|_{\mathcal{L}(\mathbb{X})}\leq C$, for $0\leq s\leq t\leq T$.
			\item [(2)] The operator $\mathrm{U}(t,s):\mathbb{X}\rightarrow \mathrm{D}(\mathrm{A})$  for $0\leq s\leq t\leq T$ and the mapping $t\mapsto \mathrm{U}(t,s)$ is strongly differentiable in $\mathbb{X}$. The derivative $\frac{\partial}{\partial t}\mathrm{U}(t,s)\in\mathcal{L}(\mathbb{X})$ and it is strongly continuous on $0\leq s\leq t\leq T$. Moreover,
			\begin{align*}
			\frac{\partial}{\partial t}\mathrm{U}(t,s)-\mathrm{A}(t)\mathrm{U}(t,s)=0, \ \text{ for } \ 0\leq s< t\leq T,
			\end{align*}
			\begin{align*}
			\left\|\frac{\partial}{\partial t}\mathrm{U}(t,s)\right\|_{\mathcal{L}(\mathbb{X})}=\left\|\mathrm{A}(t)\mathrm{U}(t,s)\right\|_{\mathcal{L}(\mathbb{X})}\leq \frac{C}{t-s},
			\end{align*}
			and
			\begin{align*}
			\left\|\mathrm{A}(t)\mathrm{U}(t,s)\mathrm{A}(s)^{-1}\right\|_{\mathcal{L}(\mathbb{X})}\leq C, \ \text{ for }\ 0\leq s\leq t\leq T.
			\end{align*}
			\item [(3)] For each $t\in J$ and every $v\in \mathrm{D}(\mathrm{A})$, $\mathrm{U}(t,s)v$ is differentiable with respect to $s$ on $0\leq s\leq t\leq T$ and
			\begin{align*}
			\frac{\partial}{\partial s}\mathrm{U}(t,s)v=-\mathrm{U}(t,s)\mathrm{A}(s)v.
			\end{align*}
		\end{enumerate}
	\end{lem}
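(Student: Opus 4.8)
The statement is the classical construction of a parabolic evolution family under Kato--Tanabe type conditions, so the plan is to build $\mathrm{U}(t,s)$ by the parametrix (frozen-coefficient) method and successive approximations. First I would observe that hypotheses \textit{(R1)}--\textit{(R2)} make each $\mathrm{A}(t)$ sectorial: the resolvent bound $\|\mathrm{R}(\lambda,\mathrm{A}(t))\|_{\mathcal{L}(\mathbb{X})}\le K/(|\lambda|+1)$ on $\mathrm{Re}\,\lambda\ge 0$ guarantees that each $\mathrm{A}(t)$ generates an analytic semigroup $\{e^{\sigma\mathrm{A}(t)}\}_{\sigma\ge 0}$, uniformly in $t\in J$, together with the smoothing estimate $\|\mathrm{A}(t)e^{\sigma\mathrm{A}(t)}\|_{\mathcal{L}(\mathbb{X})}\le C\sigma^{-1}$. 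I would then take as a zeroth approximation the frozen-coefficient propagator $\mathrm{U}_0(t,s):=e^{(t-s)\mathrm{A}(s)}$, which satisfies $\mathrm{U}_0(s,s)=\mathrm{I}$ and solves $\partial_t\mathrm{U}_0(t,s)=\mathrm{A}(s)\mathrm{U}_0(t,s)$, but with the frozen generator $\mathrm{A}(s)$ rather than $\mathrm{A}(t)$.

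Next I would seek the true propagator in the perturbed form
\[
\mathrm{U}(t,s)=\mathrm{U}_0(t,s)+\int_s^t \mathrm{U}_0(t,\tau)\,\Phi(\tau,s)\,\mathrm{d}\tau,
\]
and, imposing $\partial_t\mathrm{U}(t,s)=\mathrm{A}(t)\mathrm{U}(t,s)$, derive a Volterra integral equation for the correction kernel,
\[
\Phi(t,s)=\Phi_1(t,s)+\int_s^t\Phi_1(t,\tau)\,\Phi(\tau,s)\,\mathrm{d}\tau,\qquad \Phi_1(t,s):=\big(\mathrm{A}(t)-\mathrm{A}(s)\big)\mathrm{U}_0(t,s).
\]
The crucial point is that assumption \textit{(R3)} makes $\Phi_1$ a weakly singular kernel: writing $\Phi_1(t,s)=(\mathrm{A}(t)-\mathrm{A}(s))\mathrm{A}(s)^{-1}\,\mathrm{A}(s)e^{(t-s)\mathrm{A}(s)}$ and combining the H\"older bound $\|(\mathrm{A}(t)-\mathrm{A}(s))\mathrm{A}(s)^{-1}\|_{\mathcal{L}(\mathbb{X})}\le N|t-s|^{\delta}$ with $\|\mathrm{A}(s)e^{(t-s)\mathrm{A}(s)}\|_{\mathcal{L}(\mathbb{X})}\le C(t-s)^{-1}$ yields $\|\Phi_1(t,s)\|_{\mathcal{L}(\mathbb{X})}\le C'(t-s)^{\delta-1}$. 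I would then solve the Volterra equation by the Neumann iteration $\Phi=\sum_{n\ge 1}\Phi_n$ with $\Phi_{n+1}(t,s)=\int_s^t\Phi_1(t,\tau)\Phi_n(\tau,s)\,\mathrm{d}\tau$; iterating the $(t-s)^{\delta-1}$ bound through the Beta-function identity $\int_s^t(t-\tau)^{\delta-1}(\tau-s)^{n\delta-1}\mathrm{d}\tau=(t-s)^{(n+1)\delta-1}B(\delta,n\delta)$ produces factors $\Gamma(\delta)^n/\Gamma(n\delta)$ that decay fast enough for the series to converge in $\mathcal{L}(\mathbb{X})$ uniformly on $J$.

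Finally I would check that the resulting $\mathrm{U}(t,s)$ has all the asserted properties. The uniform bound (1) follows from the analytic-semigroup bound for $\mathrm{U}_0$ together with the integrable estimate on $\Phi$. For (2), I would differentiate the representation in $t$ and use $\partial_t\mathrm{U}_0(t,\tau)=\mathrm{A}(t)\mathrm{U}_0(t,\tau)$ together with the cancellation built into the Volterra equation to recover $\partial_t\mathrm{U}(t,s)=\mathrm{A}(t)\mathrm{U}(t,s)$ and the estimates $\|\mathrm{A}(t)\mathrm{U}(t,s)\|_{\mathcal{L}(\mathbb{X})}\le C(t-s)^{-1}$ and $\|\mathrm{A}(t)\mathrm{U}(t,s)\mathrm{A}(s)^{-1}\|_{\mathcal{L}(\mathbb{X})}\le C$. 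Property (3), differentiability in $s$ on $\mathrm{D}(\mathrm{A})$, is obtained by the analogous adjoint-type computation, while the evolution identities of Definition \ref{def2.1} and the uniqueness of the family follow from a Gronwall argument applied to the difference of any two propagators solving the same Cauchy problem. I expect the genuine obstacle to be the singular differentiation in step (2): controlling the limit $\tau\to t^-$ in $\int_s^t\mathrm{A}(t)\mathrm{U}_0(t,\tau)\Phi(\tau,s)\,\mathrm{d}\tau$, which is precisely where the H\"older exponent $\delta>0$ is indispensable and where the frozen splitting $\mathrm{A}(t)\mathrm{U}_0(t,\tau)=\mathrm{A}(t)e^{(t-\tau)\mathrm{A}(\tau)}$ must be regularized before passing to the limit.
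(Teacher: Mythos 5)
Your proposal is correct and reproduces essentially the argument behind the cited result: the paper offers no proof of this lemma, importing it directly from Theorem 6.1, Chapter 5 of Pazy, whose proof is exactly the parametrix construction you describe (frozen-coefficient propagator, Volterra equation for the correction kernel, weak singularity of order $(t-s)^{\delta-1}$ from the H\"older condition \textit{(R3)}, and summation of the Neumann series via the Beta-function identity). Your identification of the delicate point --- regularizing $\mathrm{A}(t)\mathrm{U}_0(t,\tau)$ before passing to the limit $\tau\to t^-$ --- matches where the cited proof expends its main effort.
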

	\begin{lem}[Proposition 2.1, \cite{WF}]\label{lem2.2}
		Suppose $\left\{\mathrm{A}(t):t\in J\right\}$ satisfies the Assumptions (R1)-(R4). Let $\left\{\mathrm{U}(t,s):0\leq s\leq t\leq T\right\}$ be the linear evolution family generated by $\left\{\mathrm{A}(t):t\in J\right\}$, then $\left\{\mathrm{U}(t,s):0\leq s\leq t\leq T\right\}$ is \emph{a compact operator}, whenever $t-s> 0$.
	\end{lem}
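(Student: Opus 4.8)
The plan is to exploit the smoothing property of the evolution family recorded in Lemma \ref{lem2.1}(2): for $t-s>0$, the operator $\mathrm{U}(t,s)$ maps all of $\mathbb{X}$ into $\mathrm{D}(\mathrm{A})$ and $\mathrm{A}(t)\mathrm{U}(t,s)\in\mathcal{L}(\mathbb{X})$ with $\|\mathrm{A}(t)\mathrm{U}(t,s)\|_{\mathcal{L}(\mathbb{X})}\le C/(t-s)$. The idea is to factor $\mathrm{U}(t,s)$ as the composition of a compact operator with this bounded operator, so that compactness follows from the standard fact that the product of a compact operator and a bounded operator is again compact.

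First I would use (R2) to observe that $0\in\rho(\mathrm{A}(t))$ for every $t\in J$ (since $\mathrm{Re}\,0=0\ge 0$), whence $\mathrm{A}(t)^{-1}=-\mathrm{R}(0,\mathrm{A}(t))\in\mathcal{L}(\mathbb{X})$ exists with $\|\mathrm{A}(t)^{-1}\|_{\mathcal{L}(\mathbb{X})}\le K$. Next I would upgrade the compactness hypothesis (R4): it gives compactness of $\mathrm{R}(\mu,\mathrm{A}(t))$ merely for \emph{some} $\mu\in\rho(\mathrm{A}(t))$, but the resolvent identity
\begin{align*}
\mathrm{R}(\lambda,\mathrm{A}(t))=\mathrm{R}(\mu,\mathrm{A}(t))+(\mu-\lambda)\mathrm{R}(\lambda,\mathrm{A}(t))\mathrm{R}(\mu,\mathrm{A}(t))
\end{align*}
exhibits $\mathrm{R}(\lambda,\mathrm{A}(t))$ as a sum of a compact operator and the product of a bounded operator with a compact one; since such combinations are compact, $\mathrm{R}(\lambda,\mathrm{A}(t))$ is compact for every $\lambda\in\rho(\mathrm{A}(t))$. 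Taking $\lambda=0$ shows that $\mathrm{A}(t)^{-1}$ is a compact operator on $\mathbb{X}$.

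Finally, I would combine these facts. For each $x\in\mathbb{X}$ and $t-s>0$ we have $\mathrm{U}(t,s)x\in\mathrm{D}(\mathrm{A})$, so the identity $\mathrm{U}(t,s)x=\mathrm{A}(t)^{-1}\big(\mathrm{A}(t)\mathrm{U}(t,s)x\big)$ holds, yielding the operator factorization $\mathrm{U}(t,s)=\mathrm{A}(t)^{-1}\circ\big[\mathrm{A}(t)\mathrm{U}(t,s)\big]$ on all of $\mathbb{X}$. Here $\mathrm{A}(t)\mathrm{U}(t,s)\in\mathcal{L}(\mathbb{X})$ by Lemma \ref{lem2.1}(2) and $\mathrm{A}(t)^{-1}$ is compact by the previous step, so the composition is compact, proving that $\mathrm{U}(t,s)$ is compact whenever $t-s>0$.

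I expect the only genuinely delicate point to be the justification that compactness of the resolvent at a single $\mu$ propagates to the whole resolvent set (the resolvent-identity step) and, relatedly, verifying that $\lambda=0$ indeed lies in $\rho(\mathrm{A}(t))$ so that the factorization uses a legitimately defined and compact $\mathrm{A}(t)^{-1}$; the factorization itself and the compact-times-bounded argument are routine. The Hölder condition (R3) plays no direct role in the compactness reasoning: it enters only through Lemma \ref{lem2.1}, guaranteeing the existence of the evolution family and the bound on $\mathrm{A}(t)\mathrm{U}(t,s)$ that the argument consumes.
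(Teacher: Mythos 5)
Your argument is correct and is the standard proof of this fact: the paper itself offers no proof, merely citing Proposition~2.1 of \cite{WF}, and the argument there rests on essentially the same factorization $\mathrm{U}(t,s)=\mathrm{A}(t)^{-1}\big[\mathrm{A}(t)\mathrm{U}(t,s)\big]$ with $\mathrm{A}(t)^{-1}$ compact and $\mathrm{A}(t)\mathrm{U}(t,s)$ bounded for $t-s>0$. The two points you flag as delicate — that compactness of the resolvent at one point of $\rho(\mathrm{A}(t))$ propagates to all of it via the resolvent identity, and that $0\in\rho(\mathrm{A}(t))$ by (R2) so that $\mathrm{A}(t)^{-1}$ is a well-defined bounded (hence compact) operator — are both handled correctly.
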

	\subsection{Phase space}
	We now provide an axiomatic definition for the phase space $\mathcal{B},$ introduced by Hale and Kato in \cite{HY} and suitably modified to deal with the impulsive functional differential inclusion (cf. \cite{VOj}).  The space $\mathcal{B}$ is a linear space of all functions from $(-\infty, 0]$ into $\mathbb{X}$ endowed with the seminorm $\left\|\cdot\right\|_{\mathcal{B}}$ and satisfying the following axioms:
	\begin{enumerate}
		\item [(A1)] If $x: (-\infty, T]\rightarrow \mathbb{X},\;,$  such that $x_{0}\in \mathcal{B}$ and $x|_{J}\in \mathrm{PC}(J;\mathbb{X})$. Then, the following conditions hold:
		\begin{itemize}
			\item [(i)] $x_{t}\in\mathcal{B}$ for $t\in J$.
			\item [(ii)] $\left\|x_{t}\right\|_{\mathcal{B}}\leq \Lambda(t)\sup\{\left\|x(s)\right\|_{\mathbb{X}}: 0
			\leq s\leq t\}+\Upsilon(t)\left\|x_{0}\right\|_{\mathcal{B}},$  for $t\in J$, where $\Lambda, \Upsilon:[0, \infty)\rightarrow [0, \infty)$ are independent of $y$, the function $\Lambda(\cdot)$ is strictly positive and continuous, $\Upsilon(\cdot)$ is locally bounded.
		\end{itemize}
		\item [(A2)] The space $\mathcal{B}$ is complete. 
	\end{enumerate}  
	Let us consider the following examples of phase space.
	
\begin{Ex}\label{ex}For any $r>0$, we define a set 
	\begin{align*}
	\mathcal{P}:=\left\{\psi:[-r, 0]\to\mathbb{X}:\psi \ \text{is bounded and measurable }\right\}
	\end{align*} 
	equipped with the norm 
	\begin{align}
	\label{norm}\left\|\psi\right\|_{[-r, 0]}=\int_{-r}^{0}\left\|\psi(\theta)\right\|_{\mathbb{X}}\mathrm{d}\theta, \ \text{for all}\ \psi\in\mathcal{P}.
	\end{align}
	We now define a phase space $\mathcal{B}=\mathcal{B}_g$ as
	\begin{align*}
	\mathcal{B}_g:=\left\{\psi:(-\infty, 0]\to\mathbb{X}: \psi\vert_{[-r, 0]}\in\mathcal{P}\ \text{and}\ \int_{-\infty}^{0}g(s)\left\|\psi\right\|_{[s, 0]}\mathrm{d}s<+\infty \right\},
	\end{align*}
	endowed with the norm \begin{align}\label{22}\left\|\psi\right\|_{\mathcal{B}_g}=\int_{-\infty}^{0}g(s)\left\|\psi\right\|_{[s, 0]}\mathrm{d}s, \ \text{for all}\ \psi\in\mathcal{B}_g,\end{align}
	where $g:(-\infty, 0]\to(0, \infty)$ is a continuous function such that $\int_{-\infty}^{0}g(s)\mathrm{d}s=l<+\infty$. It is easy to verify that $(\mathcal{B}_g, \left\|\cdot\right\|_{\mathcal{B}_g}) $ is a Banach space.
\end{Ex}

\begin{Ex}
	For any $r>0$, let us define a set 
	\begin{align*}
	\mathcal{P'}:&=\{\psi:[-r, 0]\to\mathbb{X}:\psi \ \text{is piecewise continuous with a finite number of} \\&\qquad \text{discontinuity points}\ \{t_*\}\ \text{on}\ [-r,0)\ \ \mbox{such that}\ \psi(t_*^-) \ \mbox{and}\ \psi(t_*^+)\ \mbox{exist}.\}  
	\end{align*} 
	equipped with the norm defined in \eqref{norm}. We consider a phase space $\mathcal{B}=\mathcal{B}_g$ as
	\begin{align*}
	\mathcal{B}_g:=\left\{\psi:(-\infty, 0]\to\mathbb{X}: \psi\vert_{[-r, 0]}\in\mathcal{P'}\ \text{and}\ \int_{-\infty}^{0}g(s)\left\|\psi\right\|_{\mathbb{X}}\mathrm{d}s<+\infty \right\},
	\end{align*}
	endowed with the norm \begin{align}\label{23}\left\|\psi\right\|_{\mathcal{B}_g}=\int_{-\infty}^{0}g(s)\left\|\psi(s)\right\|_{\mathbb{X}}\mathrm{d}s, \ \text{for all}\ \psi\in\mathcal{B}_g,\end{align}
	where the function $g$ is same as previous example.	
\end{Ex} 	
Note that the uniform norm defined on the set $\mathcal{P}$ in the  definition of phase space given in \cite{YKC} is replaced by the integral norm given in \ref{norm}. The need for introducing such a norm in the phase space $\mathcal{B}_g$ is discussed later in Remark \ref{rem3.1}. 

We now define a set
\begin{align*} 
\mathrm{PC}_{\mathcal{B}}&:=\{x:(-\infty, T] \rightarrow \mathbb{X} : x_0=\phi\in\mathcal{B}\ \mbox{and}\ x\vert_{J}\in \mathrm{PC}(J;\mathbb{X})\}, \end{align*} with the norm 
$$\left\|x\right\|_{\mathrm{PC}_{\mathcal{B}}}=\left\|\phi\right\|_{\mathcal{B}}+\sup\limits_{t\in J} \left\|x(t)\right\|_{\mathbb{X}}.$$
Let us now verify the Assumption (A1) for the Example \ref{ex}.
	\begin{lem}\label{lm2.3}
		Suppose $x\in \mathrm{PC}_{\mathcal{B}_g}$, then for each $t\in J,\ x_t\in\mathcal{B}_g$. Moreover,
		\begin{align*}
		\left\|x_t\right\|_{\mathcal{B}_g}\leq\left\|\phi\right\|_{\mathcal{B}_g}+lt\sup_{0\le\theta\le t}\left\|x(\theta)\right\|_{\mathbb{X}},
		\end{align*}
		where $l=\int_{-\infty}^{0}g(s)\mathrm{d}s$. 
	\end{lem}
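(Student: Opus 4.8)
The plan is to verify directly the two defining conditions of $\mathcal{B}_g$ for the translate $x_t$ and then estimate its seminorm by splitting both the outer and the inner integrals at the point where the history $\phi$ takes over from the solution $x|_J$. First I would record the explicit form of the translate: for $\theta\le 0$ one has $x_t(\theta)=x(t+\theta)$, which equals $\phi(t+\theta)$ when $t+\theta<0$ (that is, $\theta<-t$) and is a value of $x|_{J}$ when $t+\theta\ge 0$. With this, membership $x_t|_{[-r,0]}\in\mathcal{P}$ follows because on $[-r,0]$ the function $\theta\mapsto x(t+\theta)$ is pieced together from (part of) the bounded measurable history $\phi$ and from the piecewise continuous $x|_{[0,t]}$; note that $t\ge 0$ keeps the relevant arguments $t+\theta$ of the history inside $[-r,0)$, so both pieces are bounded and measurable. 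The remaining integrability requirement in the definition of $\mathcal{B}_g$ will drop out of the norm estimate below, so the substantive work is the inequality.

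For the estimate, write $M_t:=\sup_{0\le\theta\le t}\left\|x(\theta)\right\|_{\mathbb{X}}$ and split the outer integral in $\left\|x_t\right\|_{\mathcal{B}_g}=\int_{-\infty}^{0}g(s)\left\|x_t\right\|_{[s,0]}\mathrm{d}s$ at $s=-t$. On $[-t,0]$ every $\theta\in[s,0]$ satisfies $t+\theta\ge 0$, hence $\left\|x_t\right\|_{[s,0]}\le(-s)M_t$; on $(-\infty,-t)$ I would split the inner integral $\int_{s}^{0}\left\|x(t+\theta)\right\|_{\mathbb{X}}\mathrm{d}\theta$ at $\theta=-t$, the change of variable $u=t+\theta$ turning the history part into $\left\|\phi\right\|_{[t+s,0]}$ while the solution part is bounded by $tM_t$. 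Collecting the $M_t$-terms and using $-s\le t$ on $[-t,0]$ gives $M_t\big[\int_{-t}^{0}(-s)g(s)\mathrm{d}s+t\int_{-\infty}^{-t}g(s)\mathrm{d}s\big]\le tM_t\int_{-\infty}^{0}g(s)\mathrm{d}s=ltM_t$, which is exactly the desired second term.

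The one point that needs care — and where a naive argument via the change of variable $s\mapsto s+t$ would wrongly demand monotonicity of $g$ — is the history contribution $\int_{-\infty}^{-t}g(s)\left\|\phi\right\|_{[t+s,0]}\mathrm{d}s$. I would avoid any extra hypothesis on $g$ by the elementary observation that for $s\le -t$ and $t\ge 0$ one has $s\le t+s\le 0$, so $[t+s,0]\subseteq[s,0]$; since $\left\|\phi(\cdot)\right\|_{\mathbb{X}}\ge 0$, this yields $\left\|\phi\right\|_{[t+s,0]}\le\left\|\phi\right\|_{[s,0]}$. Consequently the history contribution is at most $\int_{-\infty}^{-t}g(s)\left\|\phi\right\|_{[s,0]}\mathrm{d}s\le\int_{-\infty}^{0}g(s)\left\|\phi\right\|_{[s,0]}\mathrm{d}s=\left\|\phi\right\|_{\mathcal{B}_g}$. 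Adding the two contributions gives $\left\|x_t\right\|_{\mathcal{B}_g}\le\left\|\phi\right\|_{\mathcal{B}_g}+ltM_t$, and finiteness of the right-hand side (from $\phi\in\mathcal{B}_g$ together with the boundedness of $x$ on $J$) simultaneously confirms the pending integrability condition, so $x_t\in\mathcal{B}_g$ as claimed. The main obstacle is thus conceptual rather than computational: recognizing that the interval inclusion, not a monotonicity assumption on $g$, is what controls the history term.
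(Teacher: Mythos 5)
Your proposal is correct and follows essentially the same route as the paper's proof: split the outer integral at $s=-t$, split the inner integral at the point where the history $\phi$ hands over to $x|_{[0,t]}$, bound the history contribution via the interval inclusion $[t+s,0]\subseteq[s,0]$ (which is exactly the step $\int_{s+t}^{0}\le\int_{s}^{0}$ in the paper), and bound the remaining terms by $lt\sup_{0\le\theta\le t}\|x(\theta)\|_{\mathbb{X}}$. The only cosmetic difference is that you bound the $[-t,0]$ piece by $(-s)M_t$ before relaxing to $tM_t$, whereas the paper bounds it by $tM_t$ directly.
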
 
	\begin{proof}
			For any $r>0$ and $t\in[0,r]$, it is easy to verify that the function $x_t$ is bounded and measurable on $[-r,0]$. Moreover, for $t\in J$, we estimate 
			\begin{align}
			\left\|x_t\right\|_{\mathcal{B}_g}&=\int_{-\infty}^{0}g(s)\left\|x_t\right\|_{[s,0]}\mathrm{d}s=\int_{-\infty}^{-t}g(s)\left\|x_t\right\|_{[s,0]}\mathrm{d}s+\int_{-t}^{0}g(s)\left\|x_t\right\|_{[s,0]}\mathrm{d}s\nonumber\\&=\int_{-\infty}^{-t}g(s)\left(\int_{s+t}^{t}\left\|x(\theta)\right\|_{\mathbb{X}}\mathrm{d}\theta\right)\mathrm{d}s+\int_{-t}^{0}g(s)\left(\int_{s+t}^{t}\left\|x(\theta)\right\|_{\mathbb{X}}\mathrm{d}\theta\right)\mathrm{d}s\nonumber\\&\le\int_{-\infty}^{-t}g(s)\left(\int_{s+t}^{0}\left\|x(\theta)\right\|_{\mathbb{X}}\mathrm{d}\theta\right)\mathrm{d}s+\int_{-\infty}^{-t}g(s)\left(\int_{0}^{t}\left\|x(\theta)\right\|_{\mathbb{X}}\mathrm{d}\theta\right)\mathrm{d}s\nonumber\\&\quad+\int_{-t}^{0}g(s)\left(\int_{0}^{t}\left\|x(\theta)\right\|_{\mathbb{X}}\mathrm{d}\theta\right)\mathrm{d}s\nonumber\\&=\int_{-\infty}^{-t}g(s)\left(\int_{s+t}^{0}\left\|x(\theta)\right\|_{\mathbb{X}}\mathrm{d}\theta\right)\mathrm{d}s+\int_{-\infty}^{0}g(s)\left(\int_{0}^{t}\left\|x(\theta)\right\|_{\mathbb{X}}\mathrm{d}\theta\right)\mathrm{d}s\nonumber\\&\le\int_{-\infty}^{0}g(s)\left(\int_{s}^{0}\left\|x(\theta)\right\|_{\mathbb{X}}\mathrm{d}\theta\right)\mathrm{d}s+\int_{-\infty}^{0}g(s)\mathrm{d}s\left(\int_{0}^{t}\left\|x(\theta)\right\|_{\mathbb{X}}\mathrm{d}\theta\right)\nonumber\\&=\left\|x_0\right\|_{\mathcal{B}_g}+l\int_{0}^{t}\left\|x(\theta)\right\|_{\mathbb{X}}\mathrm{d}\theta\nonumber\\&\le\left\|\phi\right\|_{\mathcal{B}_g}+lt\sup_{0\le\theta\le t}\left\|x(\theta)\right\|_{\mathbb{X}}\nonumber.
			\end{align}
			Since $\phi\in\mathcal{B}_g$, it is immediate that $x_t\in\mathcal{B}_g$. 

	\end{proof}
	\section{Existence of mild solution}\label{sec3}\setcounter{equation}{0} 
	In this section, we examine the existence of a mild solution for the problem \eqref{eqn:1.1}. First, we provide a definition of the mild solution for the differential inclusion \eqref{eqn:1.1}. 
	\begin{Def}
	For a given $u\in\mathrm{L}^2(J;\mathbb{U})$, a function $x:(-\infty, T]\to\mathbb{X}$ is called a \emph{mild solution} of the differential inclusion \eqref{eqn:1.1}, if there exists a function  $f\in\mathrm{L}^1(J;\mathbb{X})$ such that $f\in \mathrm{F}(t, x_t)$, the function $x(\cdot)$ satisfying  $ x(t)=\phi(t) , \text{ on } t\in (-\infty,0]$ (where $\phi \in \mathcal{B}$), $\Delta x|_{t=\tau_{k}}=I_{k}(x(\tau_{k})), \ k=1,\dots,m,$ and the restriction of $ x(\cdot) $ on the intervals $ J_k$, $k=0,\dots,m$ is continuous and is given by
	\begin{align*}
	x(t)&= \mathrm{U}(t,0)\phi(0) +\int_{0}^{t}\mathrm{U}(t,s)[\mathrm{B}u(s)+f(s)]\mathrm{d}s+\sum_{0<\tau_{k}<t}\mathrm{U}(t,\tau_{k})I_{k}(x(\tau_{k})),
	\end{align*}
	where $J_{0}=[0, \tau_{1}]$, $J_{k}=(\tau_{k}$, $\tau_{k+1}]$, $k=1,\dots,m. $
	\end{Def}
	We impose the following assumptions to prove the existence of a mild solution of the differential inclusion \eqref{eqn:1.1}:
	\begin{Assumption}\label{as3.1}  
		
		\begin{enumerate}
			\item[\textit{(H1)}] The operator $\mathrm{B}: \mathbb{U}\rightarrow\mathbb{X}$ is bounded with $\left\|\mathrm{B}\right\|_{\mathcal{L}(\mathbb{U},\mathbb{X})}= M_B$.
			\item [\textit{(H2)}]  The multi-valued map $F$ is weakly compact and convex valued.
			\item [\textit{(H3)}] For a.e. $t\in J$, the multi-valued map $\mathrm{F}(t, \cdot):\mathcal{B}\multimap\mathbb{X}$ is u.h.c and for every $\psi\in\mathcal{B},$ the function $\mathrm{F}(\cdot, \psi):J\multimap\mathbb{X}$ has a measurable selection.  
			\item [\textit{(H4)}] For each $r>0,$ there exist a function $\gamma\in\mathrm{L}^1(J;[0, +\infty))$ such that  $$  \left\|\mathrm{F}(t, \psi)\right\|_{\mathbb{X}}=\sup_{z\in \mathrm{F}(t, \psi)}\left\|z\right\|_{\mathbb{X}}\leq\gamma(t),\ \text{for a.e.}\ t\in J,\ \psi\in\mathcal{B}.$$ 
			\item [\textit{(H5)}] The impulses $ I_{k}: \mathbb{X} \rightarrow \mathbb{X}$,\ $k=1,\dots,m$, are continuous and satisfy
			$$  \left\|I_{k}(x) \right\|_{\mathbb{X}} \leq d_{k},\ \text{for all} \ x \in \mathbb{X}, \ k=1,\dots,m.$$
		\end{enumerate}
	\end{Assumption}
	We now introduce the Nemytskii operator $\mathrm{N}_{\mathrm{F}}:\mathrm{PC}(J;\mathbb{X})\multimap\mathrm{L}^1(J;\mathbb{X})$ as
	\begin{align}\label{32}
	\mathrm{N}_{\mathrm{F}}(x)=\left\{f\in\mathrm{L}^1(J;\mathbb{X}):f(t)\in \mathrm{F}(t, \tilde{x}_t),\ \text{for a.e.}\ t\in J \right\},
	\end{align}
	where $ \tilde{x}:(-\infty,T]\to\mathbb{X}$ is such that  $\tilde{x}(t)=\phi(t),\ t\in(-\infty, 0]$ and $\tilde{x}(t)=x(t)$ on $J$. It is easy to verify that the operator $\mathrm{N}_\mathrm{F}(x)$ is convex for every $x\in \mathrm{PC}(J;\mathbb{X})$ as the multi-valued map $\mathrm{F}$ is convex valued.  
	\begin{rem}\label{rem3.1}
		In general, the Nemytskii operator need not be nonempty valued. It appears to the authors that without introducing a suitable norm on the phase space $\mathcal{B}$, this operator often has no measurable selection, for instance see, Examples 4.1 and 4.2, \cite{GU}.  Equivalently we say that the function $\mathrm{F}(\cdot, \psi):J\multimap\mathbb{X}$ has a measurable selection for every $\psi\in \mathcal{B}$ is not true in general. This problem is finally overcome by using the integral norm given in \eqref{norm} instead of the uniform norm in the definition of phase space introduced in \cite{YKC}.
	\end{rem}
	We now provide some important properties of the Nemytskii operator $\mathrm{N}_\mathrm{F}(x)$.
	\begin{lem}\label{lm3.1}
		Let $x\in \mathrm{PC}(J;\mathbb{X})$ and $\mathrm{F}:J\times\mathcal{B}\multimap\mathbb{X}$ satisfy (H2)-(H4). Then there exists at least one Bochner integrable selection  $w(\cdot)\in \mathrm{F}(\cdot, \tilde{x}_{(\cdot)})$. In other words, Nemytskii operator has nonempty values.
	\end{lem}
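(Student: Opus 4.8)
The plan is to produce a single Bochner integrable selection of the multifunction $t\mapsto \mathrm{F}(t,\tilde{x}_t)$ by an approximation-and-weak-limit argument, rather than by first establishing full measurability of this multifunction (which the hypotheses, supplying only one measurable selection of $\mathrm{F}(\cdot,\psi)$ per fixed $\psi$, do not directly give). The first and conceptually most important step is to record that the superposition $t\mapsto\tilde{x}_t$ is strongly measurable, indeed continuous, as a map from $J$ into $\mathcal{B}_g$. This is exactly where the integral norm \eqref{norm}--\eqref{22} is used: for $x\in\mathrm{PC}(J;\mathbb{X})$ the translates $\tilde{x}_t$ and $\tilde{x}_{t'}$ differ, in the $\mathcal{B}_g$-norm, by $\int_{-\infty}^{0}g(s)\int_{s}^{0}\|\tilde{x}(t+\theta)-\tilde{x}(t'+\theta)\|_{\mathbb{X}}\,\mathrm{d}\theta\,\mathrm{d}s$, which tends to $0$ as $t'\to t$ by the $\mathrm{L}^1$-continuity of translations, notwithstanding the finitely many jumps of $x$. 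Under the uniform norm this continuity fails at the impulse points, which is precisely the defect noted in Remark \ref{rem3.1}.

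Next I would approximate. Since $t\mapsto\tilde{x}_t$ is continuous, its range is separable, so by the Pettis theorem there is a sequence of countably-valued simple functions $\xi_n:J\to\mathcal{B}_g$ with $\xi_n(t)\to\tilde{x}_t$ in $\mathcal{B}_g$ for a.e. $t\in J$, where each $\xi_n$ is constant, say $\xi_n\equiv\psi_i^{(n)}\in\mathcal{B}_g$, on the members $E_i^{(n)}$ of a measurable partition of $J$. On each such $E_i^{(n)}$ the hypothesis (H3) furnishes a measurable selection of $\mathrm{F}(\cdot,\psi_i^{(n)})$; piecing these together over the countable partition yields a measurable $g_n:J\to\mathbb{X}$ with $g_n(t)\in\mathrm{F}(t,\xi_n(t))$ for a.e. $t$. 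By (H4) we have $\|g_n(t)\|_{\mathbb{X}}\le\gamma(t)$ a.e., so each $g_n\in\mathrm{L}^1(J;\mathbb{X})$ and the whole sequence is dominated by $\gamma\in\mathrm{L}^1(J;[0,\infty))$.

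The domination gives uniform integrability, and since $\mathbb{X}$ is reflexive the sets $\{g_n(t)\}$ lie in the relatively weakly compact balls $\{z:\|z\|_{\mathbb{X}}\le\gamma(t)\}$; hence, by the Dunford--Pettis criterion, $\{g_n\}$ is relatively weakly compact in $\mathrm{L}^1(J;\mathbb{X})$ and, along a subsequence, $g_n\rightharpoonup w$ weakly. It remains to identify the limit, that is, to show $w(t)\in\mathrm{F}(t,\tilde{x}_t)$ for a.e. $t$; this is the crux. Here I would invoke a convergence theorem for multifunctions: by Mazur's lemma suitable convex combinations of the $g_n$ converge strongly in $\mathrm{L}^1$, hence a.e. in $\mathbb{X}$ along a further subsequence, so that for a.e. $t$,
\begin{align*}
w(t)\in\bigcap_{N\ge1}\overline{\mathrm{conv}}\bigcup_{n\ge N}\mathrm{F}(t,\xi_n(t)).
\end{align*}
Using $\xi_n(t)\to\tilde{x}_t$ together with the upper hemicontinuity of $\mathrm{F}(t,\cdot)$ and the weak compactness and convexity of its values from (H2)--(H3), the right-hand side is contained in $\mathrm{F}(t,\tilde{x}_t)$, which is weakly closed and convex. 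This gives $w(t)\in\mathrm{F}(t,\tilde{x}_t)$ a.e., so $w\in\mathrm{N}_{\mathrm{F}}(x)$ and the Nemytskii operator has nonempty values.

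The main obstacle is this final identification: weak $\mathrm{L}^1$-convergence alone does not locate $w(t)$ inside $\mathrm{F}(t,\tilde{x}_t)$, and one must marry Mazur's lemma with the closed-graph property that the upper hemicontinuity of (H3) yields once the values are weakly compact and convex by (H2). The reflexivity and separability of $\mathbb{X}$, so that bounded sets are relatively weakly compact and the weak topology is metrizable on them, and the convexity of the values, are exactly the ingredients that legitimize the passage to the limit; the integral-norm continuity of $t\mapsto\tilde{x}_t$ is what guarantees a good approximating sequence $\xi_n$ in the first place.
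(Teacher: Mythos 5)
Your proposal is correct and follows the same overall architecture as the paper's proof: approximate the history map, extract measurable selections dominated by $\gamma$ via (H4), pass to a weak $\mathrm{L}^1$ limit by Dunford--Pettis, and identify the limit as a selection of $\mathrm{F}(\cdot,\tilde{x}_{(\cdot)})$ using upper hemicontinuity together with the weak compactness and convexity of the values (your explicit Mazur-lemma argument is precisely the content of the convergence theorem, Theorem 3.2.6 of Aubin--Ekeland, that the paper cites for this step). The one place where you genuinely diverge is the approximation: the paper takes step functions $x^n$ converging uniformly to $x$ in $\mathrm{PC}(J;\mathbb{X})$, transfers this to $\tilde{x^n}_t\to\tilde{x}_t$ uniformly via axiom (A1)(ii), and then asserts that (H3) yields a measurable selection $w^n(t)\in\mathrm{F}(t,\tilde{x^n}_t)$ --- but even for a step function $x^n$ the segment $\tilde{x^n}_t$ is not constant in $t$ (the history window slides and carries $\phi$ with it), so (H3), which only supplies a measurable selection of $\mathrm{F}(\cdot,\psi)$ for each \emph{fixed} $\psi\in\mathcal{B}$, does not literally apply. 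Your route --- first proving continuity of $t\mapsto\tilde{x}_t$ into $\mathcal{B}_g$ (which is exactly where the integral norm earns its keep, as in Remark \ref{rem3.1}), then using separability of the range and the Pettis theorem to approximate by countably-valued simple functions $\xi_n$ that are genuinely constant on each cell of a partition --- makes (H3) applicable verbatim on each cell and so closes this gap cleanly. In short, same strategy, but your handling of the measurable-selection step is the more careful of the two.
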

	\begin{proof}
		Since $x\in \mathrm{PC}(J;\mathbb{X})$, more precisely $x:J\to\mathbb{X}$ is a piecewise continuous function with finite points of discontinuity. Let us consider a sequence $\{x^n\}_{n\ge1}$ of step functions, which converge uniformly to $x$ on $J$. By using the Assumption \ref{as3.1} (H3), there exists a measurable selection $w^{n}(t)\in \mathrm{F}(t,\tilde{x^n}_{t} )$. Since $w^{n}(t)$ is measurable for all $t\in J,\ n\ge1$, so $\{w^n(t): n\ge1\}$ is measurable and hence the closure of convex-hull, $\overline{\text{conv}\{w^{n}(t):  n\ge 1\}}$ is measurable as well (Theorem III.9, \cite{CC}). It is clear that $\tilde{x^n}(t)\to\tilde{x}(t)\ \text{as}\ n\to\infty$, uniformly for $t\in(-\infty, T]$, and then by axiom (A1)(ii), we have  
		\begin{align*}
		\left\|\tilde{x_t^n}-\tilde{x}_t\right\|_{\mathcal{B}}&\le\Lambda(t) \sup_{\theta\in[0,t]}\left\|x^n(\theta)-x(\theta)\right\|_{\mathbb{X}}\le K\sup_{\theta\in[0,T]}\left\|x^n(\theta)-x(\theta)\right\|_{\mathbb{X}}\\&\to0,\ \text{as}\ n\to\infty,\ \mbox{ uniformly for $t\in J$.}
		\end{align*}
		where $K=\sup_{t\in J}\Lambda(t)$. Let us define $\mathrm{W}(t):=\left\{\tilde{x_t^n}: n\ge1 \right\},\ t\in J$.  Clearly, $\mathrm{W}(t)\subset\mathcal{B}$ for $t\in J$. But $\tilde{x_t^n}$ converges $\tilde{x}_t$ uniformly in $t\in J$, and thus, it is evident the set $\mathrm{W}(t)$ is relatively compact. By employing upper hemicontinuity of $\mathrm{F}(t, \cdot)$ and Krein-Smulian theorem (cf. \cite{Rw}), we obtain that the set $\overline{\text{conv}(\mathrm{F}(t, \mathrm{W}(t)))}$ is weakly compact. Therefore, by applying the Dunford-Pettis theorem, we can find a subsequence of $w_n$, still denotes by $w_n$, such that
		$$ w_n \xrightharpoonup{w}w \ \text{ in }\ \mathrm{L}^1(J;\mathbb{X}), \ \mbox{as}\ n\to\infty.$$ 
		Moreover, by using the convergence result given in Theorem 3.2.6, \cite{JPA}, we obtain that $w(t)\in \mathrm{F}(t, \tilde{x}_t)$ for almost all $t\in J$.
	\end{proof}
	\begin{theorem}\label{thm:3.1}
	The Nemytskii operator $\mathrm{N}_{\mathrm{F}}$ is sequentially u.h.c with weakly compact values.
	\end{theorem}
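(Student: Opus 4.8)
The plan is to prove the two assertions in turn, both of which rest on the integrable bound (H4) together with the Dunford-Pettis theorem and the convergence theorem already exploited in Lemma \ref{lm3.1}. Throughout, recall from the remark following \eqref{32} that $\mathrm{N}_{\mathrm{F}}(x)$ is convex, and from Lemma \ref{lm3.1} that it is nonempty, for every $x\in\mathrm{PC}(J;\mathbb{X})$.

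For the sequential upper hemicontinuity I would reduce the claim, as is standard for multifunctions with weakly compact values that carry bounded sets into relatively weakly compact sets, to verifying that $\mathrm{N}_{\mathrm{F}}$ has a weakly sequentially closed graph: whenever $x_n\to x$ in $\mathrm{PC}(J;\mathbb{X})$ and $f_n\in\mathrm{N}_{\mathrm{F}}(x_n)$ with $f_n\xrightharpoonup{w}f$ in $\mathrm{L}^1(J;\mathbb{X})$, one has $f\in\mathrm{N}_{\mathrm{F}}(x)$. Exactly as in Lemma \ref{lm3.1}, axiom (A1)(ii) yields $\|\tilde{x}^n_t-\tilde{x}_t\|_{\mathcal{B}}\le K\sup_{\theta\in[0,T]}\|x^n(\theta)-x(\theta)\|_{\mathbb{X}}\to0$ uniformly in $t\in J$. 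The bound (H4) makes $\{f_n\}$ integrably bounded, hence uniformly integrable, so by Dunford-Pettis the weak limit $f$ exists along a subsequence. Invoking Mazur's lemma, a sequence of convex combinations of $\{f_n\}$ converges strongly and thus, along a further subsequence, pointwise a.e. to $f$. Using the upper hemicontinuity of $\mathrm{F}(t,\cdot)$ and the Krein-Smulian theorem (cf. \cite{Rw}) to keep the relevant closed convex hulls weakly compact, the convergence theorem (Theorem 3.2.6, \cite{JPA}) then gives $f(t)\in\mathrm{F}(t,\tilde{x}_t)$ for a.e. $t\in J$, i.e. $f\in\mathrm{N}_{\mathrm{F}}(x)$.

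The weak compactness of the values then follows essentially for free. Fixing $x$ and applying the argument of the previous paragraph to the constant sequence $x_n\equiv x$ shows that $\mathrm{N}_{\mathrm{F}}(x)$ is weakly sequentially closed; on the other hand (H4) makes it integrably bounded, hence uniformly integrable, so by Dunford-Pettis it is relatively weakly compact in $\mathrm{L}^1(J;\mathbb{X})$. Combining these two facts through the Eberlein-Smulian theorem delivers that $\mathrm{N}_{\mathrm{F}}(x)$ is weakly compact, and the same relative weak compactness of images is exactly what upgrades the weak sequential closed graph into sequential upper hemicontinuity.

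The main obstacle I anticipate is precisely the passage from the weak $\mathrm{L}^1$-convergence of the selections $f_n$ to the pointwise set-membership $f(t)\in\mathrm{F}(t,\tilde{x}_t)$: weak convergence carries no pointwise information by itself, so one must first convert it into a.e. convergence via Mazur's lemma and then control the limit superior sets $\bigcap_{n}\overline{\mathrm{conv}}\,\bigcup_{m\ge n}\mathrm{F}(t,\tilde{x}^m_t)$, which requires the upper hemicontinuity of $\mathrm{F}(t,\cdot)$ and the weak compactness of these hulls (Krein-Smulian). A secondary, purely topological point is the need to phrase the whole argument sequentially, for which the Eberlein-Šmulian theorem is used to translate the weak-sequential closed graph, together with relative weak compactness of the images, into genuine sequential upper hemicontinuity.
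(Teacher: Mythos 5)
Your argument is sound, but note that there is no in-paper proof to match it against: the paper's entire ``proof'' of Theorem \ref{thm:3.1} is the single sentence that it can be obtained from Theorem 2.1 of \cite{KR}. What you have written is essentially a self-contained reconstruction of that cited result, adapted to the delay setting: (H4) gives integrable boundedness, hence via Dunford--Pettis relative weak compactness of the values and of images of bounded sets; Mazur's lemma converts weak $\mathrm{L}^1$-convergence of the selections into a.e.\ convergence of convex combinations; and the multivalued convergence theorem (Theorem 3.2.6 of \cite{JPA}), together with (H2), (H3), Krein--Smulian and axiom (A1)(ii) for the convergence $\tilde{x}^n_t\to\tilde{x}_t$ in $\mathcal{B}$, yields $f(t)\in\mathrm{F}(t,\tilde{x}_t)$ a.e. This is exactly the machinery the paper itself deploys in Lemma \ref{lm3.1}, and it is also how the statement is used later (Step 5 of Theorem \ref{thm3.2}), so your formulation via a weakly sequentially closed graph plus relative weak compactness of images is the right one. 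Two points worth tightening in a full write-up: weak closedness of the convex set $\mathrm{N}_{\mathrm{F}}(x)$ already follows from strong closedness plus convexity, so the Eberlein--Smulian detour for the values can be shortened; and the upgrade from closed graph to sequential upper hemicontinuity needs $\bigcup_n\mathrm{N}_{\mathrm{F}}(x_n)$ to be relatively weakly compact, which holds precisely because the bound $\gamma$ in (H4) is uniform in $\psi$ --- this is the one place that uniformity is essential and deserves an explicit mention.
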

	\begin{proof}
		A proof of the above can be obtained from Theorem 2.1, \cite{KR}.
	\end{proof}
		In order to establish the main result of this section, that is, the existence of  a mild solution of the problem \eqref{eqn:1.1}, first we prove the following lemma. In the next lemma, $\mathbb{X}$ can be any Banach space. 
	\begin{lem}\label{lm3.2}Let us define the operator $\mathrm{G}:\mathrm{L}^1(J;\mathbb{X})\to \mathrm{PC}(J;\mathbb{X})$ such that 
		\begin{align}\label{eqn:3.2}
		(\mathrm{G}h)(t):=\int_{0}^{t}\mathrm{U}(t,s)h(s)\mathrm{d}s,\ t\in J.
		\end{align}
		If $\{h_n\}_{n\ge1
		}\subset\mathrm{L}^1(J;\mathbb{X})$ is any integrably bounded sequence, then the sequence $v_n:=\mathrm{G}(h_n)$ is relatively compact.
	\end{lem}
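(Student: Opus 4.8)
The plan is to invoke the Arzel\`a--Ascoli theorem for the $\mathbb{X}$-valued functions $v_n$. First note that each $v_n=\mathrm{G}(h_n)$ in fact lies in $\mathrm{C}(J;\mathbb{X})\subset\mathrm{PC}(J;\mathbb{X})$, so it suffices to show that $\{v_n\}_{n\ge1}$ is relatively compact in $\mathrm{C}(J;\mathbb{X})$, for which I must establish (i) that $\{v_n(t):n\ge1\}$ is relatively compact in $\mathbb{X}$ for each fixed $t\in J$, and (ii) that $\{v_n\}_{n\ge1}$ is equicontinuous on $J$. The crucial structural input is the integrable bound: since $\{h_n\}$ is integrably bounded there is $\sigma\in\mathrm{L}^1(J;[0,\infty))$ with $\|h_n(s)\|_{\mathbb{X}}\le\sigma(s)$ for a.e.\ $s$ and every $n$. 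This single dominating function $\sigma$ is what renders every estimate below uniform in $n$, and it is precisely the feature that $\mathrm{G}$ loses on a merely norm-bounded (rather than integrably bounded) sequence; hence it should be exploited throughout. By Lemma \ref{lem2.1}(1), $\|v_n(t)\|_{\mathbb{X}}\le C\int_0^T\sigma(s)\mathrm{d}s$, so the family is uniformly bounded.

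For (i), fix $t\in(0,T]$ and $\eta\in(0,t)$, and split
\[
v_n(t)=\mathrm{U}(t,t-\eta)\int_0^{t-\eta}\mathrm{U}(t-\eta,s)h_n(s)\mathrm{d}s+\int_{t-\eta}^{t}\mathrm{U}(t,s)h_n(s)\mathrm{d}s.
\]
The inner integral in the first term is bounded in $\mathbb{X}$ uniformly in $n$ (again by the $\sigma$-bound), while $\mathrm{U}(t,t-\eta)$ is compact by Lemma \ref{lem2.2} since $\eta>0$; hence the set of first terms is relatively compact. The second term has norm at most $C\int_{t-\eta}^{t}\sigma(s)\mathrm{d}s$, which tends to $0$ as $\eta\downarrow0$ uniformly in $n$ by absolute continuity of $s\mapsto\int_0^s\sigma$. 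Thus $\{v_n(t):n\ge1\}$ is a uniform limit of relatively compact sets and is therefore totally bounded, i.e.\ relatively compact; at $t=0$ one has $v_n(0)=0$.

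For (ii), take $0\le t_1<t_2\le T$ and, for $\eta\in(0,t_1)$, write $v_n(t_2)-v_n(t_1)$ as the sum of $\int_{t_1}^{t_2}\mathrm{U}(t_2,s)h_n(s)\mathrm{d}s$, of $\int_{t_1-\eta}^{t_1}[\mathrm{U}(t_2,s)-\mathrm{U}(t_1,s)]h_n(s)\mathrm{d}s$, and of $\int_0^{t_1-\eta}[\mathrm{U}(t_2,s)-\mathrm{U}(t_1,s)]h_n(s)\mathrm{d}s$. The first two terms are bounded by $C\int_{t_1}^{t_2}\sigma(s)\mathrm{d}s+2C\int_{t_1-\eta}^{t_1}\sigma(s)\mathrm{d}s$, which is small for $t_2-t_1$ and $\eta$ small, uniformly in $n$. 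The last term is at most $\big(\sup_{s\in[0,t_1-\eta]}\|\mathrm{U}(t_2,s)-\mathrm{U}(t_1,s)\|_{\mathcal{L}(\mathbb{X})}\big)\|\sigma\|_{\mathrm{L}^1}$; since $t_1-s\ge\eta$ and $t_2-s\ge\eta$ on this range, I invoke the fact that a strongly continuous family of compact operators is continuous in the operator norm away from the diagonal (a consequence of Lemma \ref{lem2.2} together with the strong continuity in Definition \ref{def2.1}(2)) to conclude that this supremum tends to $0$ as $t_2-t_1\to0$ with $\eta$ fixed. Because $J$ is compact, this pointwise equicontinuity upgrades to uniform equicontinuity of $\{v_n\}$.

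The main obstacle is exactly this last operator-norm estimate in (ii): the pointwise compactness (i) follows routinely from the compactness of $\mathrm{U}(t,t-\eta)$, whereas equicontinuity is what fails for general $\mathrm{L}^1$-bounded sequences, and obtaining it here requires both the off-diagonal uniform operator-norm continuity of the evolution family and the uniform-in-$n$ control furnished by the common dominating function $\sigma$. Alternatively, one may avoid citing the off-diagonal norm continuity by using the factorization $\mathrm{U}(t_2,s)-\mathrm{U}(t_1,s)=[\mathrm{U}(t_2,t_1)-\mathrm{I}]\mathrm{U}(t_1,s)$, observing that $\int_0^{t_1-\eta}\mathrm{U}(t_1,s)h_n(s)\mathrm{d}s$ ranges over a relatively compact set (by the argument of (i)) and that $\mathrm{U}(t_2,t_1)\to\mathrm{I}$ strongly, hence uniformly on that compact set, as $t_2\to t_1$. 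With (i) and (ii) established, Arzel\`a--Ascoli yields the relative compactness of $\{v_n\}$ in $\mathrm{C}(J;\mathbb{X})$, and therefore in $\mathrm{PC}(J;\mathbb{X})$.
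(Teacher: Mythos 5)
Your proof is correct and follows essentially the same route as the paper's: pointwise relative compactness via the split at $t-\eta$ and the compactness of $\mathrm{U}(t,t-\eta)$, equicontinuity via the three-piece decomposition and the uniform-operator-norm continuity of $\mathrm{U}(t,s)$ away from the diagonal (which the paper justifies by Lemma \ref{lem2.2} together with Theorem 3.2, Chapter 2 of \cite{P}), and then Arzel\`a--Ascoli. Your alternative factorization $\mathrm{U}(t_2,s)-\mathrm{U}(t_1,s)=[\mathrm{U}(t_2,t_1)-\mathrm{I}]\mathrm{U}(t_1,s)$ combined with uniform strong convergence on the relatively compact set $\left\{\int_0^{t_1-\eta}\mathrm{U}(t_1,s)h_n(s)\mathrm{d}s\right\}$ is a valid way to sidestep the off-diagonal norm-continuity citation, but it is only a local variant of the same argument.
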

	\begin{proof}
			Since we know that the sequence $h_n$ is integrably bounded, then there exists a function $\sigma\in\mathrm{L}^1(J;[0,+\infty))$ such that $$\left\|h_n(t)\right\|_{\mathbb{X}}\le\sigma(t), \ \text{for a.e.}\ t\in J.$$ For $t=0$, it is trivial. Take $t>0$ and for any $\varepsilon>0$, we can choose a $\delta\in(0,t)$  such that
			\begin{align}\label{eqn:3.3}
			C\int_{t-\delta}^{t}\sigma(s)\mathrm{d}s<\varepsilon/2,
			\end{align}
			where $C$ is the constant appearing in Lemma \ref{lem2.1} (1). We now define
			\begin{align*}
			y_n(t)=\int_{0}^{t-\delta}\mathrm{U}(t,s)h_n(s)\mathrm{d}s,\  n\ge1,\ t\in(0, T].
			\end{align*}
			Using Definition \ref{def2.1} (1), we can also write it as 
			\begin{align*}
			y_n(t)=&\mathrm{U}(t, t-\delta)\int_{0}^{t-\delta}\mathrm{U}(t-\delta,s)h_n(s)\mathrm{d}s\\
			=&\mathrm{U}(t, t-\delta)y^\delta_n(t),
			\end{align*}
			where $y^\delta_n(t)=\int_{0}^{t-\delta}\mathrm{U}(t-\delta,s)h_n(s)\mathrm{d}s$, for $n\ge1,\ t\in(0, T]$. Then it is easy to see that the sequence $\{y^\delta_n(t)\}_{n\ge1}$ is bounded in $\mathbb{X}$ for $t\in(0, T]$. Hence, by using the compactness of $\mathrm{U}(t,s)$ for $t-s>0$, we can find a finite $ z_{i}$'s, for $i=1,\dots, p $ in $ \mathbb{X} $ such that 
			\begin{align}\label{eqn:3.4}
			\{y_n(t)\}_{n\ge1}\subset \bigcup_{i=1}^{p}B(z_{i}, \varepsilon/2).
			\end{align}
			Using the definition of $v_n$ and $y_n$, and \eqref{eqn:3.3},  we compute 
			\begin{align*}
			\left\|v_n(t)-y_n(t)\right\|_{\mathbb{X}}&=\left\|\int_{0}^{t}\mathrm{U}(t,s)h_n(s)\mathrm{d}s-\int_{0}^{t-\delta}\mathrm{U}(t,s)h_n(s)\mathrm{d}s\right\|_{\mathbb{X}}\nonumber\\&\le\left\|\int_{t-\delta}^{t}\mathrm{U}(t,s)h_n(s)\mathrm{d}s\right\|_{\mathbb{X}}\le C\int_{t-\delta}^{t}\sigma(s)\mathrm{d}s\le\varepsilon/2.
			\end{align*}
			Consequently, from \eqref{eqn:3.4}, we infer that 
			\begin{align*}
			\{v_n(t)\}_{n\ge1}\subset\bigcup_{i=1}^{p}B(z_{i}, \varepsilon).
			\end{align*} 
			Thus, for each $t\in J,$ the sequence $\{v_n(t)\}_{n\ge1}$ is relatively compact in $\mathbb{X}$. 
			
			Next, we show that the sequence $\{v_n \}_{n\ge1}$ is equicontinuous on $J$. Let us take  $0\le t_1\le t_2\le T$ and estimate
			\begin{align}\label{eqn:3.5}
			\left\|v_n(t_2)-v_n(t_1)\right\|_{\mathbb{X}}&\le\int_{0}^{t_1}\left\|\left[\mathrm{U}(t_{2},s)-\mathrm{U}(t_{1},s)\right]h_n(s)\right\|_{\mathbb{X}}\mathrm{d}s+\int_{t_1}^{t_2}\left\|\mathrm{U}(t_2,s)h_n(s)\right\|_{\mathbb{X}}\mathrm{d}s\nonumber\\&\le\int_{0}^{t_1}\left\|\mathrm{U}(t_{2},s)-\mathrm{U}(t_{1},s)\right\|_{\mathcal{L}(\mathbb{X})}\sigma(s)\mathrm{d}s+C\int_{t_1}^{t_2}\sigma(s)\mathrm{d}s.
			\end{align}
			If $t_1=0,$ then from the above expression, we deduce that 
			\begin{align*}
			\lim_{t_2\to0^+}\left\|v_n(t_2)-v_n(t_1)\right\|_{\mathbb{X}}=0.
			\end{align*} 
			For $0<\delta<t_1<T$, from \eqref{eqn:3.5}, we have
			\begin{align}\label{eqn:3.6}
			&	\left\|v_n(t_2)-v_n(t_1)\right\|_{\mathbb{X}}\nonumber\\&\le\int_{0}^{t_1-\delta}\left\|\mathrm{U}(t_{2},s)-\mathrm{U}(t_{1},s)\right\|_{\mathcal{L}(\mathbb{X})}\sigma(s)\mathrm{d}s+\int_{t_1-\delta}^{t_1}\left\|\mathrm{U}(t_{2},s)-\mathrm{U}(t_{1},s)\right\|_{\mathcal{L}(\mathbb{X})}\sigma(s)\mathrm{d}s\nonumber\\&\quad+C\int_{t_1}^{t_2}\sigma(s)\mathrm{d}s\nonumber\\&\le\sup_{s\in[0,t_1-\delta]}\left\|\mathrm{U}(t_{2},s)-\mathrm{U}(t_{1},s)\right\|_{\mathcal{L}(\mathbb{X})}\int_{0}^{t_1-\delta}\sigma(s)\mathrm{d}s+2C\int_{t_1-\delta}^{t_1}\sigma(s)\mathrm{d}s+C\int_{t_1}^{t_2}\sigma(s)\mathrm{d}s.\nonumber\\
			\end{align}
			From Lemma \ref{lem2.2}, we infer that the evolution family $\mathrm{U}(t,s)$ is compact for $t-s>0$, and hence $\mathrm{U}(t,s)$ is continuous in the uniform operator topology for $\delta\leq s<t\leq T$ (see Theorem 3.2, Chapter 2, \cite{P}). Therefore, by using the continuity of $\mathrm{U}(t,s)$  in the uniform operator topology and using the arbitrariness of $ \delta$, the right hand side of \eqref{eqn:3.6} converges to zero as $|t_{2}-t_{1}| \rightarrow 0$. Thus, the sequence $\{v_n\}$ is equicontinuous on $J$. Hence, the sequence $\{v_n\}$ is totally bounded, so relatively compact.
	\end{proof}
	\begin{cor}\label{cor:3.1}
		If $\{h_n\}_{n\ge1}\subset\mathrm{L}^1(J;\mathbb{X})$ is any integrably bounded sequence such that $$h_n\xrightharpoonup{w} h \ \text{in}\ \mathrm{L}^1(J;\mathbb{X}), \mbox{as}\ n\to\infty$$ then
		$$\mathrm{G}(h_n)\to\mathrm{G}(h)\ \text{in}\ \mathrm{PC}(J;\mathbb{X}), \ \mbox{as}\ n\to\infty.$$ 
	\end{cor}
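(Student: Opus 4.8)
The plan is to combine the relative compactness supplied by Lemma \ref{lm3.2} with a subsequence argument in which the weak convergence of $h_n$ serves only to identify the limit. The guiding principle is that weak $\mathrm{L}^1$-convergence by itself yields merely \emph{weak, pointwise} convergence of the images under $\mathrm{G}$, while Lemma \ref{lm3.2} upgrades this to \emph{strong, uniform} convergence.

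First I would fix $t\in J$ and observe that the evaluation map $T_t:\mathrm{L}^1(J;\mathbb{X})\to\mathbb{X}$, $T_t h:=\int_0^t\mathrm{U}(t,s)h(s)\mathrm{d}s=(\mathrm{G}h)(t)$, is a bounded linear operator, since by Lemma \ref{lem2.1}(1) one has $\left\|T_t h\right\|_{\mathbb{X}}\le C\int_0^t\left\|h(s)\right\|_{\mathbb{X}}\mathrm{d}s\le C\left\|h\right\|_{\mathrm{L}^1(J;\mathbb{X})}$. Every bounded linear operator between Banach spaces is continuous from the weak topology to the weak topology, so the hypothesis $h_n\xrightharpoonup{w}h$ in $\mathrm{L}^1(J;\mathbb{X})$ forces $(\mathrm{G}h_n)(t)=T_t h_n\xrightharpoonup{w}T_t h=(\mathrm{G}h)(t)$ in $\mathbb{X}$, for each $t\in J$. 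This already singles out $\mathrm{G}h$ as the only possible limit, albeit in a weak pointwise sense, and also shows $\mathrm{G}h\in\mathrm{PC}(J;\mathbb{X})$ (indeed it is continuous).

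Next I would invoke Lemma \ref{lm3.2}: since $\{h_n\}_{n\ge1}$ is integrably bounded by assumption, the sequence $v_n:=\mathrm{G}(h_n)$ is relatively compact in the metric space $\mathrm{PC}(J;\mathbb{X})$. I would then conclude by the standard ``relative compactness plus unique subsequential limit'' device: it suffices to show that every strongly convergent subsequence of $\{v_n\}$ has limit $\mathrm{G}h$. Suppose $v_{n_k}\to v$ in $\mathrm{PC}(J;\mathbb{X})$; then $v_{n_k}(t)\to v(t)$ strongly in $\mathbb{X}$ for each $t\in J$, hence weakly, and comparing with the weak limit obtained in the first step together with uniqueness of weak limits gives $v(t)=(\mathrm{G}h)(t)$ for all $t$, i.e.\ $v=\mathrm{G}h$. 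Consequently, if $v_n\not\to\mathrm{G}h$ there would exist $\varepsilon>0$ and a subsequence with $\left\|v_{n_k}-\mathrm{G}h\right\|_{\mathrm{PC}}\ge\varepsilon$; relative compactness would extract a further convergent sub-subsequence whose limit is forced to be $\mathrm{G}h$, a contradiction. Therefore the whole sequence satisfies $\mathrm{G}(h_n)\to\mathrm{G}(h)$ in $\mathrm{PC}(J;\mathbb{X})$.

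The only delicate point, and the one I expect to be the crux, is the passage from weak to strong convergence: the operator $T_t$ transports weak convergence only to weak convergence in $\mathbb{X}$, so nothing in the first step alone delivers convergence in the uniform norm of $\mathrm{PC}(J;\mathbb{X})$. It is precisely the compactness of the evolution family $\mathrm{U}(t,s)$ for $t-s>0$, exploited in Lemma \ref{lm3.2} to obtain relative compactness of $\{v_n\}$, that makes the subsequence argument close and thereby promotes the pointwise weak convergence to strong uniform convergence.
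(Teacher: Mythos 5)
Your proof is correct and follows exactly the route the paper intends: the corollary is stated as an immediate consequence of Lemma \ref{lm3.2}, with the relative compactness of $\{\mathrm{G}(h_n)\}$ combined with the weak-to-weak continuity of the bounded linear evaluation maps $T_t$ to identify $\mathrm{G}h$ as the unique subsequential limit. The subsequence/contradiction device you use to upgrade this to convergence of the full sequence is the standard (and the intended) argument, so there is nothing to add.
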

	\begin{rem}
		The above result is not true in  general, for instance, see Example 3.4, Chapter 3, \cite{LY}. In this way, we are rectifying some of the works reported on differential inclusions (cf. \cite{NIM,VV}), which claims the compactness of the operator $\mathrm{G}:\mathrm{L}^1(J;\mathbb{X})\to \mathrm{PC}(J;\mathbb{X})$ defined in \eqref{eqn:3.2}. 
	\end{rem}
	Let us now state and prove the main result of this section.
	\begin{theorem}\label{thm3.2}
		If the  Assumptions (R1)-(R4) and (H1)-(H5) hold true, then for each $u\in\mathrm{L}^2(J;\mathbb{U})$, the system \eqref{eqn:1.1} has a mild solution on $J$.
	\end{theorem}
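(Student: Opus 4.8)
The plan is to reformulate the existence problem as a fixed point problem for a multi-valued solution operator, and then apply the Leray--Schauder type alternative for multi-valued maps stated in Theorem~\ref{thm:2.1}. Concretely, I would fix $u\in\mathrm{L}^2(J;\mathbb{U})$ and define a multi-valued map $\Psi:\mathrm{PC}(J;\mathbb{X})\multimap\mathrm{PC}(J;\mathbb{X})$ whose image $\Psi(x)$ consists of all functions of the form
\[
t\mapsto \mathrm{U}(t,0)\phi(0)+\int_{0}^{t}\mathrm{U}(t,s)\bigl[\mathrm{B}u(s)+f(s)\bigr]\mathrm{d}s+\sum_{0<\tau_k<t}\mathrm{U}(t,\tau_k)I_k(x(\tau_k)),
\]
where $f$ ranges over the Nemytskii selection set $\mathrm{N}_{\mathrm{F}}(x)$ defined in \eqref{32}. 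A fixed point of $\Psi$ is precisely a mild solution of \eqref{eqn:1.1}, so the whole theorem reduces to verifying the hypotheses of Theorem~\ref{thm:2.1} on a suitable closed convex set $\mathcal{K}$ (for instance the whole space, since $0\in\mathrm{PC}(J;\mathbb{X})$) and then ruling out alternative~(i).

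The verification splits into the standard list of properties. First, $\Psi(x)$ is nonempty by Lemma~\ref{lm3.1} (the Nemytskii operator has nonempty values) and convex because $\mathrm{N}_{\mathrm{F}}(x)$ is convex, as noted after \eqref{32}; the linear map $f\mapsto\mathrm{G}(f)$ preserves convexity. Second, to obtain \emph{compactness}, I would write $\Psi(x)$ as the sum of the fixed term $\mathrm{U}(t,0)\phi(0)+\int_0^t\mathrm{U}(t,s)\mathrm{B}u(s)\mathrm{d}s$, the impulse sum (finite-dimensional range steps, continuous by (H5) and compactness of $\mathrm{U}$ for $t-\tau_k>0$), and the integral term $\mathrm{G}(f)$. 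The crucial compactness of the image under a bounded set of selectors comes from Lemma~\ref{lm3.2}: by (H4), for $x$ in a bounded set the associated selectors $f\in\mathrm{N}_{\mathrm{F}}(x)$ satisfy $\|f(t)\|_{\mathbb{X}}\le\gamma(t)$, so the family is integrably bounded and $\mathrm{G}$ maps it into a relatively compact subset of $\mathrm{PC}(J;\mathbb{X})$. Third, \emph{upper semicontinuity with closed graph}: by Remark~\ref{rem2.1} it suffices to show $\Psi$ has a closed graph given complete continuity and compact values, and this I would derive from the sequential upper hemicontinuity with weakly compact values of $\mathrm{N}_{\mathrm{F}}$ (Theorem~\ref{thm:3.1}) combined with the strong convergence $\mathrm{G}(f_n)\to\mathrm{G}(f)$ guaranteed by Corollary~\ref{cor:3.1} whenever $f_n\rightharpoonup f$ weakly in $\mathrm{L}^1$.

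Finally, to exclude alternative~(i), I would take any $x\in\kappa\Psi(x)$ with $\kappa\in(0,1)$ and derive an a priori bound on $\|x\|_{\mathrm{PC}}$. Estimating the mild-solution formula and using $\|\mathrm{U}(t,s)\|_{\mathcal{L}(\mathbb{X})}\le C$ from Lemma~\ref{lem2.1}(1), the uniform bound $\|f\|\le\gamma\in\mathrm{L}^1$ from (H4), the impulse bounds $d_k$ from (H5), and $\|\mathrm{B}\|=M_B$ from (H1), one gets
\[
\|x(t)\|_{\mathbb{X}}\le C\Bigl(\|\phi(0)\|_{\mathbb{X}}+M_B\sqrt{T}\,\|u\|_{\mathrm{L}^2}+\|\gamma\|_{\mathrm{L}^1}+\sum_{k=1}^{m}d_k\Bigr),
\]
a bound independent of $x$, so the set $\mathcal{D}$ is bounded and (i) fails. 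Thus (ii) holds and $\Psi$ has a fixed point. \textbf{The main obstacle} I anticipate is the closed-graph/upper-semicontinuity step: one must pass to the limit through a weakly convergent sequence of selectors, where the nonlinearity is only upper hemicontinuous and weakly-compact valued, and this is exactly where the interplay between Theorem~\ref{thm:3.1} (to control the $\mathrm{F}$-side weakly) and Corollary~\ref{cor:3.1} (to upgrade the weak $\mathrm{L}^1$ convergence of selectors to strong $\mathrm{PC}$ convergence after applying $\mathrm{G}$) becomes indispensable, since the operator $\mathrm{G}$ itself is \emph{not} compact and only maps integrably bounded sequences to relatively compact ones.
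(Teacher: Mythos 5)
Your proposal is correct and follows essentially the same route as the paper: reformulate as a fixed point problem for the multi-valued solution operator built from $\mathrm{N}_{\mathrm{F}}$, verify convexity, boundedness, equicontinuity and pointwise relative compactness (the paper carries out the $\varepsilon$-truncation of Lemma~\ref{lm3.2} directly on the full operator, impulse terms included, rather than quoting the lemma for the integral term alone), establish the closed graph via Dunford--Pettis, Theorem~\ref{thm:3.1} and Corollary~\ref{cor:3.1}, and rule out alternative~(i) of Theorem~\ref{thm:2.1} by the same a priori estimate. The only cosmetic difference is that the paper works on the set $\mathrm{E}=\{x\in\mathrm{PC}_{\mathcal{B}}:x(0)=\phi(0)\}$ rather than all of $\mathrm{PC}(J;\mathbb{X})$, and your identification of the closed-graph step as the delicate point (weak $\mathrm{L}^1$ convergence of selectors upgraded to strong $\mathrm{PC}$ convergence through $\mathrm{G}$) matches exactly where the paper invests its effort.
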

	\begin{proof}
		Let us define \begin{align}\label{set}\mathrm{E}&:=\{x\in \mathrm{PC}_{\mathcal{B}} : x(0)=\phi(0)\}\ \text{be the space endowed with the norm}\ \left\|\cdot\right\|_{\mathrm{PC}_{\mathcal{B}}}.\end{align} We now define a multi-valued operator $\Gamma : \mathrm{E}\multimap\mathrm{E}$ as	
		\begin{align*}
		\Gamma(x)&=\bigg\{z\in \mathrm{E}: z(t)=\mathrm{U}(t,0)\phi(0) +\int_{0}^{t}\mathrm{U}(t,s)[\mathrm{B}u(s)+f(s)]\mathrm{d}s\\&\qquad+\sum_{0<\tau_{k}<t}\mathrm{U}(t,\tau_{k})I_{k}(\tilde{x}(\tau_{k})),\ f\in \mathrm{N}_{\mathrm{F}}(x) \bigg\},
		\end{align*}
		has nonempty values. It is clear from the definition of the multi-valued operator $\Gamma$ that the problem of finding a mild solution of system $\eqref{eqn:1.1}$ is equivalent to finding a fixed point of the operator $\Gamma$. We divide the proof of the multi-valued operator $\Gamma$ has a fixed point in the following steps.
		\vskip 0.1in 
		\noindent\textbf{Step (1):} \emph{$\Gamma(x)$ is convex for each $x\in \mathrm{E}$}. Let us assume $z_1, z_2\in\Gamma(x),$ for $x\in \mathrm{E}$, then there exist $f_1,f_2\in\mathrm{N}_\mathrm{F}(x)$ such that, for each $t\in J$, we have
		\begin{align*}
		z_i(t)= \mathrm{U}(t,0)\phi(0)+\int_{0}^{t}\mathrm{U}(t,s)\left[\mathrm{B}u(s)+f_i(s)\right]\mathrm{d}s+\sum_{0<\tau_{k}<t}\mathrm{U}(t,\tau_{k})I_{k}(\tilde{x}(\tau_{k})),
		\end{align*}
		for $i=1,2.$ For any $\lambda\in[0,1]$ and $x\in \mathrm{E}$, we compute
		\begin{align*}
		&(\lambda z_1+(1-\lambda)z_2)(t)\\&=\mathrm{U}(t,0)\phi(0)+\int_{0}^{t}\mathrm{U}(t,s)\mathrm{B}u(s)\mathrm{d}s+\int_{0}^{t}\mathrm{U}(t,s)\left[\lambda f_1(s)+(1-\lambda)f_2(s)\right]\mathrm{d}s\\&\quad+\sum_{0<\tau_{k}<t}\mathrm{U}(t,\tau_{k})I_{k}(\tilde{x}(\tau_{k})).
		\end{align*}
		Since the operator $\mathrm{N}_\mathrm{F}(x)$ is convex, we obtain, $\lambda f_1(s)+(1-\lambda)f_2(s)\in\mathrm{N}_\mathrm{F}(x),$ for $s\in J$. Hence, $\lambda z_1+(1-\lambda)z_2\in\Gamma(x)$, more precisely the multi-valued operator $\Gamma$ is convex. 
		\vskip 0.1in 
		\noindent\textbf{Step (2):} \emph{$\Gamma$ is bounded}. That is, the image of any bounded set under the multi-valued operator $\Gamma$ is bounded in $\mathrm{E}$. For any $q>0$, we consider a set  $\mathrm{E}_q:=\{x\in\mathrm{E} : \left\|x\right\|_{\mathrm{PC}_{\mathcal{B}}}\le q\}$. Let us take $x\in \mathrm{E}_q$ and $z\in\Gamma(x)$, then there exists $f\in\mathrm{N}_\mathrm{F}(x)$ such that 
		\begin{align}\label{eqn:3.7}
		z(t)&=\mathrm{U}(t,0)\phi(0)+\int_{0}^{t}\mathrm{U}(t,s)\left[\mathrm{B}u(s)+f(s)\right]\mathrm{d}s+\sum_{0<\tau_{k}<t}\mathrm{U}(t,\tau_{k})I_{k}(\tilde{x}(\tau_{k})).
		\end{align}
		We now estimate
		\begin{align}
		&\left\|z(t)\right\|_{\mathbb{X}}\nonumber\\&\le\left\|\mathrm{U}(t,0)\phi(0)\right\|_{\mathbb{X}}+\int_{0}^{t}\left\|\mathrm{U}(t,s)\left[\mathrm{B}u(s)+f(s)\right]\right\|_{\mathbb{X}}\mathrm{d}s+\sum_{0<\tau_{k}<t}\left\|\mathrm{U}(t,\tau_{k})I_{k}(\tilde{x}(\tau_{k}))\right\|_{\mathbb{X}}\nonumber\\&\le \left\|\mathrm{U}(t,0)\right\|_{\mathcal{L}(\mathbb{X})}\left\|\phi(0)\right\|_{\mathbb{X}}+\int_{0}^{t}\left\|\mathrm{U}(t,s)\right\|_{\mathcal{L}(\mathbb{X})}\left[\left\|\mathrm{B}\right\|_{\mathcal{L}(\mathbb{U},\mathbb{X})}\left\|u(s)\right\|_{\mathbb{U}}+\left\|f(s)\right\|_{\mathbb{X}}\right]\mathrm{d}s\nonumber\\&\quad+\sum_{0<\tau_{k}<t}\left\|\mathrm{U}(t,\tau_{k})\right\|_{\mathcal{L}(\mathbb{X})}\left\|I_{k}(\tilde{x}(\tau_{k}))\right\|_{\mathbb{X}}\nonumber\\&\le C\left\|\phi(0)\right\|_{\mathbb{X}}+CM_B\left\|u\right\|_{\mathrm{L}^2(J;\mathbb{U})}\sqrt{T}+C\left\|\gamma\right\|_{\mathrm{L}^1(J;[0,+\infty))}+C\sum_{k=1}^{m}d_k,\nonumber
		\end{align}
		for all $t\in J$. Here, we have used the condition (1)  of Lemma \ref{lem2.1}, Assumption \ref{as3.1}, and H\"{o}lder inequality. Thus, the above expression ensures that the operator $\Gamma$ is bounded. 
		\vskip 0.1in 
		\noindent\textbf{Step (3):} \emph{The image $\Gamma(\mathrm{E}_q)$ is equicontinuous for every $q>0$}. Since we know that for $z\in\Gamma(x)$ with $x\in \mathrm{E}_q$, there exists $f\in\mathrm{N}_\mathrm{F}(x)$ such that \eqref{eqn:3.7} holds. Using Lemma \ref{lem2.1} and Assumptions \ref{as3.1}, for $0\le t_1\le t_2\le T$ and $x\in \mathrm{E}_q$, we compute 
		\begin{align*}
		&\left\|z(t_2)-z(t_1)\right\|_{\mathbb{X}}\nonumber\\&\le\left\|\left[\mathrm{U}(t_{2},0)-\mathrm{U}(t_{1},0)\right]\phi(0)\right\|_{\mathbb{X}}+\int_{0}^{t_1}\left\|\left[\mathrm{U}(t_{2},s)-\mathrm{U}(t_{1},s)\right]\left[\mathrm{B}u(s)+f(s)\right]\right\|_{\mathbb{X}}\mathrm{d}s\nonumber\\&\quad+\int_{t_1}^{t_2}\left\|\mathrm{U}(t_2,s)\left[\mathrm{B}u(s)+f(s)\right]\right\|_{\mathbb{X}}\mathrm{d}s+\sum_{0<\tau_{k}<t_{1}}\left\|\left[\mathrm{U}(t_{2},\tau_{k})-\mathrm{U}(t_{1},\tau_{k})\right]I_{k}(\tilde{x}(\tau_{k}))\right\|_{\mathbb{X}}\nonumber\\&\quad+\sum_{t_1\leq\tau_{k}\leq t_2}\left\|\mathrm{U}(t_{2},\tau_{k})I_{k}(\tilde{x}(\tau_{k}))\right\|_{\mathbb{X}}\nonumber\\&\le\left\|\left[\mathrm{U}(t_{2},0)-\mathrm{U}(t_{1},0)\right]\phi(0)\right\|_{\mathbb{X}}+\int_{0}^{t_1}\left\|\mathrm{U}(t_{2},s)-\mathrm{U}(t_{1},s)\right\|_{\mathcal{L}(\mathbb{X})}\left[M_B\left\|u(s)\right\|_{\mathbb{U}}+\gamma(s)\right]\mathrm{d}s\nonumber\\&\quad+C\int_{t_1}^{t_2}\left[M_B\left\|u(s)\right\|_{\mathbb{U}}+\gamma(s)\right]\mathrm{d}s+\sum_{0<\tau_{k}<t_{1}}\left\|\mathrm{U}(t_{2},\tau_{k})-\mathrm{U}(t_{1},\tau_{k})\right\|_{\mathcal{L}(\mathbb{X})}d_k\nonumber\\&\quad+C\sum_{t_1\leq\tau_{k}\leq t_2}d_k.
		\end{align*}
		If $t_1=0,$ then from the above expression, we deduce that 
		$$\lim_{t_2\to0^+}\left\|z(t_2)-z(t_1)\right\|_{\mathbb{X}}=0,\; \text{ unifromly for } x\in \mathrm{E}_q.$$
		For $0<\delta<t_1<T$, we have
		\begin{align*}
		&	\left\|z(t_2)-z(t_1)\right\|_{\mathbb{X}}\nonumber\\&\le\left\|\mathrm{U}(t_{2},0)-\mathrm{U}(t_{1},0)\right\|_{\mathcal{L}(\mathbb{X})}\left\|\phi(0)\right\|_{\mathbb{X}}+C\int_{t_1}^{t_2}\left[M_B\left\|u(s)\right\|_{\mathbb{U}}+\gamma(s)\right]\mathrm{d}s\nonumber\\&\quad+\int_{0}^{t_1-\delta}\left\|\mathrm{U}(t_{2},s)-\mathrm{U}(t_{1},s)\right\|_{\mathcal{L}(\mathbb{X})}\left[M_B\left\|u(s)\right\|_{\mathbb{U}}+\gamma(s)\right]\mathrm{d}s\nonumber\\&\quad+\int_{t_1-\delta}^{t_1}\left\|\mathrm{U}(t_{2},s)-\mathrm{U}(t_{1},s)\right\|_{\mathcal{L}(\mathbb{X})}\left[M_B\left\|u(s)\right\|_{\mathbb{U}}+\gamma(s)\right]\mathrm{d}s\nonumber\\&\quad+\sum_{0<\tau_{k}<t_{1}}\left\|\mathrm{U}(t_{2},\tau_{k})-\mathrm{U}(t_{1},\tau_{k})\right\|_{\mathcal{L}(\mathbb{X})}d_k+C\sum_{t_1\leq\tau_{k}\leq t_2}d_k\nonumber\\&\le\left\|\mathrm{U}(t_{2},0)-\mathrm{U}(t_{1},0)\right\|_{\mathcal{L}(\mathbb{X})}\left\|\phi(0)\right\|_{\mathbb{X}}+CM_B\left\|u\right\|_{\mathrm{L}^2(J;\mathbb{U})}(t_2-t_1)^{1/2}+C\int_{t_1}^{t_2}\gamma(s)\mathrm{d}s\nonumber\\&\quad+\sup_{s\in[0,t_1-\delta]}\left\|\mathrm{U}(t_{2},s)-\mathrm{U}(t_{1},s)\right\|_{\mathcal{L}(\mathbb{X})}\left[M_B\left\|u\right\|_{\mathrm{L}^2(J;\mathbb{U})}\sqrt{T}+\int_{0}^{t_1-\delta}\gamma(s)\mathrm{d}s\right]\nonumber\\&\quad+2C\left[M_B\left\|u\right\|_{\mathrm{L}^2(J;\mathbb{U})}\sqrt{\delta}+\int_{t_1-\delta}^{t_1}\gamma(s)\mathrm{d}s\right]+\sum_{0<\tau_{k}<t_{1}}\left\|\mathrm{U}(t_{2},\tau_{k})-\mathrm{U}(t_{1},\tau_{k})\right\|_{\mathcal{L}(\mathbb{X})}d_k\nonumber\\&\quad+C\sum_{t_1\leq\tau_{k}\leq t_2}d_k.
		\end{align*}
		Similar to the estimate \eqref{eqn:3.6}, it is easy to verify that for arbitrary $\delta$, the right hand side of the above estimate converges to zero uniformly for $x\in \mathrm{E}_q,$ whenever $|t_{2}-t_{1}| \rightarrow 0$.
		\vskip 0.1in 
		\noindent\textbf{Step (4):} \emph{$\Gamma$ is completely continuous}. For this, we need to show that for any fixed $q>0,$ the image of $\mathrm{E}_q$ under the map $\Gamma$ is relatively compact in $\mathrm{E}$. In order to do this, taking into account of Steps 2 and 3 and the Arzela-Ascoli theorem, it  suffices to show that the set $\mathrm{V}(t):=\{z(t): z\in\Gamma(\mathrm{E}_q)\}$, for all $t\in J$ is relatively compact in $\mathbb{X}$. For $t=0$, it is trivial. Take  $0<t\le T$ and choose $0<\varepsilon<t$, then we  define the operator $\Gamma^\varepsilon:\mathrm{E}\multimap\mathrm{E}$ as
		\begin{align*}
		\Gamma^\varepsilon(x):&=\bigg\{z^\varepsilon\in\mathrm{E}: z^\varepsilon(t)= \mathrm{U}(t,0)\phi(0)+\int_{0}^{t-\varepsilon}\mathrm{U}(t,s)\left[\mathrm{B}u(s)+f(s)\right]\mathrm{d}s\nonumber\\&\qquad+\sum_{0<\tau_{k}<t-\varepsilon}\mathrm{U}(t,\tau_{k})I_{k}(\tilde{x}(\tau_{k})),\ f\in\mathrm{N}_{\mathrm{F}}(x)\bigg\}.
		\end{align*}
		For $\ x\in \mathrm{E}_q,$ and $z^\varepsilon\in\Gamma^\varepsilon(x)$, there exists $f\in\mathrm{N}_{\mathrm{F}}(x)$ such that
		\begin{align*}
		z^\varepsilon(t)&=\mathrm{U}(t,0)\phi(0)+\int_{0}^{t-\varepsilon}\mathrm{U}(t,s)\left[\mathrm{B}u(s)+f(s)\right]\mathrm{d}s+\sum_{0<\tau_{k}<t-\varepsilon}\mathrm{U}(t,\tau_{k})I_{k}(\tilde{x}(\tau_{k}))\nonumber\\&=\mathrm{U}(t,0)\phi(0)+\mathrm{U}(t,t-\varepsilon)\bigg[\int_{0}^{t-\varepsilon}\mathrm{U}(t-\varepsilon,s)\left[\mathrm{B}u(s)+f(s)\right]\mathrm{d}s\nonumber\\&\quad+\sum_{0<\tau_{k}<t-\varepsilon}\mathrm{U}(t-\varepsilon,\tau_{k})I_{k}(\tilde{x}(\tau_{k}))\bigg]\nonumber\\&=\mathrm{U}(t,t-\varepsilon)y^{\varepsilon}(t),
		\end{align*}
		where 
		\begin{align*}
		y^{\varepsilon}(t)&=\mathrm{U}(t-\varepsilon, 0)\phi(0)+\int_{0}^{t-\varepsilon}\mathrm{U}(t-\varepsilon,s)\left[\mathrm{B}u(s)+f(s)\right]\mathrm{d}s\\&\quad+\sum_{0<\tau_{k}<t-\varepsilon}\mathrm{U}(t-\varepsilon,\tau_{k})I_{k}(\tilde{x}(\tau_{k})).
		\end{align*}
		Using Lemma \ref{lem2.1} and Assumption \ref{as3.1}, one can estimate $\|y^{\varepsilon}(\cdot)\|_{\mathbb{X}}$ as
		\begin{align*}
		\|y^{\varepsilon}(t)\|_{\mathbb{X}}&\leq C\left\|\phi(0)\right\|_{\mathbb{X}}+CM_B\left\|u\right\|_{\mathrm{L}^2(J;\mathbb{U})}\sqrt{T}+C\left\|\gamma\right\|_{\mathrm{L}^1(J;[0, +\infty))}+C\sum_{k=1}^{m}d_k.
		\end{align*}
		Since the operator $ \mathrm{U}(t,s),$ for $t-s>0$ is compact,  the set $ \mathrm{V}_{\varepsilon}(t)=\{z^\varepsilon(t):z^\varepsilon\in\Gamma^{\varepsilon}(\mathrm{E}_q)\}$ is relatively compact in $ \mathbb{X}$. Moreover, for every $z\in\Gamma(\mathrm{E}_q)$, we deduce that
		\begin{align*}
		\nonumber\left\|z(t)-z^\varepsilon(t)\right\|_{\mathbb{X}}&\le\left\|\int_{0}^{t}\mathrm{U}(t,s)\left[\mathrm{B}u(s)+f(s)\right]\mathrm{d}s-\int_{0}^{t-\varepsilon}\mathrm{U}(t,s)\left[\mathrm{B}u(s)+f(s)\right]\mathrm{d}s\right\|_{\mathbb{X}}\\&\quad+\left\|\sum_{0<\tau_{k}<t}\mathrm{U}(t,\tau_{k})I_{k}(\tilde{x}(\tau_{k}))-\sum_{0<\tau_{k}<t-\varepsilon}\mathrm{U}(t,\tau_{k})I_{k}(\tilde{x}(\tau_{k}))\right\|_\mathbb{X}\\&\le\left\|\int_{t-\varepsilon}^{t}\mathrm{U}(t,s)\left[\mathrm{B}u(s)+f(s)\right]\mathrm{d}s\right\|_{\mathbb{X}}+\left\|\sum_{t-\varepsilon<\tau_{k}<t}\mathrm{U}(t,\tau_{k})I_{k}(\tilde{x}(\tau_{k}))\right\|_\mathbb{X}\\&\le C\left(M_B\left\|u\right\|_{\mathrm{L}^2(J;\mathbb{X})}\sqrt{\varepsilon}+\int_{t-\varepsilon}^{t}\gamma(s)\mathrm{d}s+\sum_{t-\varepsilon<\tau_k<t}d_k \right)\\&\to 0, \ \text{as}\  \varepsilon\to 0.
		\end{align*} 
		Thus, the set $\mathrm{V}(t)$ is totally bounded (following similarly as in the proof of Lemma \ref{lm3.2}), so by using  Step 3, we obtain that $\mathrm{V}(t)$ is relatively compact in $\mathbb{X}$.
		\vskip 0.1in 
		\noindent\textbf{Step (5):} \emph{$\Gamma$ is upper semicontinuous}. Since $\mathbb{X}$ is a reflexive Banach space,  in view of Step (4) and Remark \ref{rem2.1}, we infer that  the operator $\Gamma$ is u.s.c if and only if $\Gamma$ has a closed graph. To prove $\Gamma$ has a closed graph, we consider two sequences $x^n\to x^*$ in $\mathrm{E}$ and $z^n\to z^*$ in $\mathrm{E}$ with $z^n\in\Gamma(x^n)$. Since $z^n\in\Gamma(x^n)$, there exists $f_n\in \mathrm{N}_\mathrm{F}(x^n) $ such that 
		\begin{align}\label{eqn:3.13}
		z^n(t)= \mathrm{U}(t,0)\phi(0)+\int_{0}^{t}\mathrm{U}(t,s)\left[\mathrm{B}u(s)+f_n(s)\right]\mathrm{d}s+\sum_{0<\tau_{k}<t}\mathrm{U}(t,\tau_{k})I_{k}(\tilde{x}(\tau_{k})).
		\end{align}
		By using the Assumption \ref{as3.1} \textit{(H4)}, it is easy to verify that the sequence $\{f_n\}_{n\ge 1}$ is integrably bounded. Since the multi-valued map $\mathrm{F}$ is weakly compact valued and $f_n(t)\in \mathrm{F}(t, \tilde{x_t^n})$, then by using Dunford-Pettis theorem $\{f_n\}_{n\ge 1}$ is relatively weakly compact in $\mathrm{L}^1(J;\mathbb{X})$. Therefore, we can find a subsequence of $\{f_n\}_{n\ge 1}$, still denoted as $\{f_n\}_{n\ge 1}$ such that 
		\begin{align}\label{eqn:3.14}
		f_n\xrightharpoonup{w}  f^{*} \ \text{in}\ \mathrm{L}^1(J;\mathbb{X}), \ \mbox{as}\ n\to\infty. 
		\end{align}
		By passing $n\to\infty$ in \eqref{eqn:3.13}, we get
		\begin{align}\label{eqn:3.15}
		z^n(t)\to\mathrm{U}(t,0)\phi(0)+\int_{0}^{t}\!\!\!\mathrm{U}(t,s)\left[\mathrm{B}u(s)+f^{*}(s)\right]\mathrm{d}s+\sum_{0<\tau_{k}<t}\mathrm{U}(t,\tau_{k})I_{k}(\tilde{x}(\tau_{k}))=z^{0}(t),\end{align}
		where we used the weak convergence \eqref{eqn:3.14} together with Corollary \ref{cor:3.1}. Since we know that $x^n\to x^*$ in $\mathrm{E}$ and $f_n\in\mathrm{N}_\mathrm{F}(x^n)$, then by using the weak convergence of \eqref{eqn:3.14} and Lemma \ref{thm:3.1}, we obtain a subsequence  $f_{n_k}\xrightharpoonup{w} f^{*}\in\mathrm{N}_\mathrm{F}(x^*)$ as $k\to\infty$ (by the uniqueness of weak limit). This implies that  $z^0\in\Gamma(x^*)$. Moreover, the convergence \eqref{eqn:3.15} guarantees that $z^0(t)= z^*(t)$ for $t\in J$. Thus, we have $z^*\in\Gamma(x^*)$, which ensures that the graph of $\Gamma$ is closed.
		\vskip 0.1in 
		\noindent\textbf{Step (6):} \emph{The set $\mathcal{D}:=\{x\in\mathrm{E}: x\in\kappa\Gamma(x), \ 0<\kappa<1\}$ is bounded}. Let us take $x\in\mathcal{D}$ and $0<\kappa<1$. Then there exists $f\in\mathrm{N}_\mathrm{F}(x)$ such that
		\begin{align*}
		x(t)=\kappa\mathrm{U}(t,0)\phi(0)+\kappa\int_{0}^{t}\mathrm{U}(t,s)\left[\mathrm{B}u(s)+f(s)\right]\mathrm{d}s+\kappa\sum_{0<\tau_{k}<t}\mathrm{U}(t,\tau_{k})I_{k}(\tilde{x}(\tau_{k})).
		\end{align*}
		Using the condition (1) of Lemma \ref{lem2.1} and Assumption \ref{as3.1}, we estimate 
		\begin{align*}
		&\left\|x(t)\right\|_{\mathbb{X}}\\&\le\left\|\mathrm{U}(t,0)\phi(0)\right\|_{\mathbb{X}}+\int_{0}^{t}\left\|\mathrm{U}(t,s)\left[\mathrm{B}u(s)+f(s)\right]\right\|_{\mathbb{X}}\mathrm{d}s+\sum_{0<\tau_{k}<t}\left\|\mathrm{U}(t,\tau_{k})I_{k}(\tilde{x}(\tau_{k}))\right\|_{\mathbb{X}}\nonumber\\&\le \left\|\mathrm{U}(t,0)\right\|_{\mathcal{L}(\mathbb{X})}\left\|\phi(0)\right\|_{\mathbb{X}}+\int_{0}^{t}\left\|\mathrm{U}(t,0)\right\|_{\mathcal{L}(\mathbb{X})}\left[\left\|\mathrm{B}\right\|_{\mathcal{L}(\mathbb{U},\mathbb{X})}\left\|u(s)\right\|_{\mathbb{U}}+\left\|f(s)\right\|_{\mathbb{X}}\right]\mathrm{d}s\nonumber\\&\quad+\sum_{0<\tau_{k}<t}\left\|\mathrm{U}(t,\tau_{k})\right\|_{\mathcal{L}(\mathbb{X})}\left\|I_{k}(\tilde{x}(\tau_{k}))\right\|_{\mathbb{X}}\nonumber\\&\le C\left\|\phi(0)\right\|_{\mathbb{X}}+CM_B\left\|u\right\|_{\mathrm{L}^2(J;\mathbb{X})}\sqrt{T}+C\left\|\gamma\right\|_{\mathrm{L}^1(J;[0,+\infty))}+C\sum_{k=1}^{m}d_k,
		\end{align*}
		for all $t\in J.$ The above estimate ensures that the set $\mathcal{D}$ is bounded.
		
		Then, by invoking Theorem \ref{thm:2.1}, the operator $\Gamma$ has a fixed point, which implies that the system \eqref{eqn:1.1} has a mild solution.
	\end{proof}
	\section{Approximate Controllability Result for Semilinear System} \label{sec4}\setcounter{equation}{0}
	In this section, we investigate the approximate controllability of the system \eqref{eqn:1.1}. This will be done through the linear control problem corresponding to the system \eqref{eqn:1.1}. First, we define a resolvent operator which is an important and useful tool to study the approximate controllability of control systems. Let us define the following operators:
	\begin{equation}\label{opt:4.2}
	\left\{
	\begin{aligned}
	L_Tu&:=\int_0^T\mathrm{U}(T,t)\mathrm{B}u(t)\mathrm{d}t,\\
	\Psi_{0}^{T}&:=\int^{T}_{0}\mathrm{U}(T,t)\mathrm{B}\mathrm{B}^{*}\mathrm{U}^{*}(T,t)\mathrm{d}t=L_T(L_T)^*,\\
	\mathrm{R}(\lambda,\Psi_{0}^{T})&:=(\lambda \mathrm{I}+\Psi_{0}^{T}\mathcal{J})^{-1},\ \lambda > 0,
	\end{aligned}
	\right.
	\end{equation}
	where $\mathrm{B}^{*}$ and $\mathrm{U}^{*}(t,s)$ denote the adjoint operators of $\mathrm{B}$ and $\mathrm{U}(t,s),$ respectively. The map $\mathcal{J} : \mathbb{X} \multimap\mathbb{X}^*$ is called duality mapping which is defined as 
	\begin{align*}
	\mathcal{J}&=\{x^* \in \mathbb{X}^* : \langle x, x^* \rangle=\left\|x\right\|_{\mathbb{X}}^2= \left\|x^*\right\|_{\mathbb{X}^*}^2 \}, \text{ for all } x\in \mathbb{X}.
	\end{align*}
	If the space $\mathbb{X}$ is a reflexive Banach space, then $\mathbb{X}$ can be renormed such that $\mathbb{X}$ and $\mathbb{X}^*$ become strictly convex (\cite{AA}). From the strict convexity of $\mathbb{X}^*$,  the mapping $\mathcal{J}$ becomes single-valued as well as demicontinuous, that is, $$x_k\to x\ \text{ in }\ \mathbb{X}\ \text{ implies }\ \mathcal{J}[x_k] \xrightharpoonup{w} \mathcal{J}[x]\ \text{ in } \ \mathbb{X}^*\ \text{ as }\ k\to\infty.$$
	Note that if $\mathbb{X}$ is a separable Hilbert space, which is identified with its own dual (the duality mapping $\mathcal{J}$ becomes $\mathrm{I}$, the identity operator), then the resolvent operator is defined as $\mathrm{R}(\lambda,\Psi_{0}^{T}):=(\lambda \mathrm{I}+\Psi_{0}^{T})^{-1},\ \lambda > 0$. 
	
	We need the following assumption to establish the approximate controllability of the linear and nonlinear systems. 
	\begin{Assumption}\label{ass4.1}
	We assume that 
	\begin{enumerate}
		\item [\textit{(H0)}] for every $h\in\mathbb{X}$, $z_{\lambda}(h)=\lambda(\lambda \mathrm{I}+\Psi_{0}^{T}\mathcal{J})^{-1}(h) \rightarrow 0$ as $\lambda\downarrow 0$ in strong topology, where $z_{\lambda}(h)$ is a solution of the equation: \begin{align}\label{eqn:4.3}\lambda z_{\lambda}+\Psi_{0}^{T}\mathcal{J}[z_{\lambda}]=\lambda h.\end{align}
	\end{enumerate}	
	\end{Assumption}
	\begin{rem}
		If $\mathbb{X}$ is a separable reflexive Banach space, then for every $h\in\mathbb{X}$ and $\lambda>0$, the equation \eqref{eqn:4.3} has a unique solution  $z_{\lambda}(h)=\lambda(\lambda \mathrm{I}+\Psi_{0}^{T}\mathcal{J})^{-1}(h)=\lambda\mathrm{R}(\lambda,\Psi_{0}^{T})(h)$ (see Lemma 2.2, \cite{M}). Moreover,
		\begin{align}\label{eqn:4.4}
		\left\|z_{\lambda}(h)\right\|_{\mathbb{X}}=\left\|\mathcal{J}[z_{\lambda}(h)]\right\|_{\mathbb{X}^{*}}\leq\left\|h\right\|_{\mathbb{X}}.
		\end{align}	
	\end{rem}
	\begin{Def}
	The system given in (\ref{eqn:1.1}) is said to be \emph{approximately controllable} on $ J $, if $\ \overline{\mathfrak{R}(T;\phi,u)}=\mathbb{X}$, where $\mathfrak{R}(T;\phi,u)$ (\emph{reachable set})  is define as \begin{align*}\mathfrak{R}(T;\phi,u) = \{x(T;\phi,u): u\in \mathrm{L}^{2}(J;\mathbb{U})\}.\end{align*}
	\end{Def}
	\subsection{Linear control problem}In this subsection, we investigate the approximate controllability of the linear control problem corresponding to \eqref{eqn:1.1} in terms of the resolvent operator. To establish this, we first formulate an optimal control problem by considering the linear-quadratic regulator problem, consisting of minimizing a cost functional. The cost functional is given  by  
	\begin{equation}\label{eqn:4.5}
	\mathcal{F}(x,u)=\left\|x(T)-x_{T}\right\|^{2}_{\mathbb{X}}+\lambda\int^{T}_{0}\left\|u(t)\right\|^{2}_{\mathbb{U}}\mathrm{d}t,
	\end{equation}
	where $x(\cdot)$ is the solution of the linear control system 
	\begin{equation}\label{linear}
	\left\{
	\begin{aligned}
	x'(t)&= \mathrm{A}(t)x(t)+\mathrm{B}u(t),\ t\in J,\\
	x(0)&=\phi(0), \ \phi\in\mathcal{B},
	\end{aligned}
	\right.
	\end{equation}
	with the control $u\in \mathbb{U}$, $x_{T}\in \mathbb{X}$ and $\lambda >0$. Since $\mathrm{B}u\in\mathrm{L}^1(J;\mathbb{X})$, the system \eqref{linear} has a unique mild solution $x\in \mathrm{C}(J;\mathbb{X}) $ given by ( Corollary 2.2, Chapter 4, \cite{P})
	\begin{align*}
	x(t)= \mathrm{U}(t,0)\phi(0)+\int^{t}_{0}\mathrm{U}(t,s)\mathrm{B}u(s)\mathrm{d}s,
	\end{align*}
	for any $u\in\mathrm{L}^2(J;\mathbb{U})$.  Next, we define the \emph{admissible class} $\mathscr{A}_{\text{ad}}$  for the system \eqref{linear} as
	\begin{align*}
	\mathscr{A}_{\text{ad}}:=\big\{(x,u) :x\text{ is \text{a unique mild solution} of }\eqref{linear}  \text{ with control }u\in\mathrm{L}^2(J;\mathbb{U})\big\}.
	\end{align*}
	For a given control $u\in\mathrm{L}^2(J;\mathbb{U})$, the system \eqref{linear} has a mild solution, which ensures that the set $\mathscr{A}_{\text{ad}}$ is nonempty. By using the definition of the cost functional, we can formulate the optimal control problem as  :
	\begin{align}\label{eqn:4.7}
	\min_{ (x,u) \in \mathscr{A}_{\text{ad}}}  \mathcal{F}(x,u).
	\end{align}
	An optimal pair for the problem \eqref{eqn:4.7} is denoted by $({x}^0, u^0),$ where $u^0$ denotes an \emph{optimal control}. The existence of optimal pair for the problem \eqref{eqn:4.7} is given by the following theorem:
	\begin{theorem}[Existence of an optimal pair, \cite{MTM}]\label{optimal}
		For a given $\phi(0)\in\mathbb{X}$, there exists at least one pair  $(x^0,u^0)\in\mathscr{A}_{\text{ad}}$  such that the functional $\mathcal{F}(x,u)$ attains its minimum at $(x^0,u^0)$, where $x^0$ is the unique mild solution of the system  \eqref{linear}  with the control $u^0.$
	\end{theorem}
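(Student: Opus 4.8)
The plan is to employ the direct method of the calculus of variations. Since $\mathcal{F}(x,u)\geq 0$ for every admissible pair and $\mathscr{A}_{\text{ad}}\neq\varnothing$, the infimum $m:=\inf_{(x,u)\in\mathscr{A}_{\text{ad}}}\mathcal{F}(x,u)$ is a finite nonnegative number. First I would fix a minimizing sequence $\{(x^n,u^n)\}_{n\geq1}\subset\mathscr{A}_{\text{ad}}$ with $\mathcal{F}(x^n,u^n)\to m$. Boundedness of the cost along the sequence forces $\lambda\int_0^T\|u^n(t)\|_{\mathbb{U}}^2\mathrm{d}t$ to stay bounded, so $\{u^n\}_{n\geq1}$ is a bounded sequence in the Hilbert (hence reflexive) space $\mathrm{L}^2(J;\mathbb{U})$.

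Next, by reflexivity of $\mathrm{L}^2(J;\mathbb{U})$ I would extract a subsequence (not relabelled) with $u^n\rightharpoonup u^0$ weakly in $\mathrm{L}^2(J;\mathbb{U})$, and define $x^0$ to be the unique mild solution of \eqref{linear} associated with $u^0$, namely $x^0(t)=\mathrm{U}(t,0)\phi(0)+L_tu^0$, where for brevity $L_tu:=\int_0^t\mathrm{U}(t,s)\mathrm{B}u(s)\mathrm{d}s$. Since $u^0\in\mathrm{L}^2(J;\mathbb{U})$ gives $\mathrm{B}u^0\in\mathrm{L}^1(J;\mathbb{X})$, the pair $(x^0,u^0)$ indeed belongs to $\mathscr{A}_{\text{ad}}$. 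The crucial point is to pass to the limit in the terminal state: I would show that the terminal control-to-state operator $L_T:\mathrm{L}^2(J;\mathbb{U})\to\mathbb{X}$ carries the weak topology of $\mathrm{L}^2(J;\mathbb{U})$ into (at least) the weak topology of $\mathbb{X}$, so that $x^n(T)=\mathrm{U}(T,0)\phi(0)+L_Tu^n\rightharpoonup\mathrm{U}(T,0)\phi(0)+L_Tu^0=x^0(T)$. As $L_T$ is a bounded linear operator, this weak-to-weak continuity is automatic; alternatively, using compactness of $\mathrm{U}(T,s)$ for $T-s>0$ (Lemma \ref{lem2.2}), one checks that $L_T$ is the uniform limit of the compact operators $u\mapsto\int_0^{T-\varepsilon}\mathrm{U}(T,s)\mathrm{B}u(s)\mathrm{d}s$, with operator-norm error at most $CM_B\sqrt{\varepsilon}$ by Hölder's inequality, so $L_T$ is itself compact and $x^n(T)\to x^0(T)$ strongly in $\mathbb{X}$.

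Finally I would invoke weak lower semicontinuity. The terminal cost $v\mapsto\|v-x_T\|_{\mathbb{X}}^2$ is convex and continuous, hence weakly lower semicontinuous on $\mathbb{X}$, so $\|x^0(T)-x_T\|_{\mathbb{X}}^2\leq\liminf_{n\to\infty}\|x^n(T)-x_T\|_{\mathbb{X}}^2$ (an equality if one uses the strong convergence above); likewise $u\mapsto\int_0^T\|u(t)\|_{\mathbb{U}}^2\mathrm{d}t$ is convex and strongly continuous on $\mathrm{L}^2(J;\mathbb{U})$, hence weakly lower semicontinuous, giving $\int_0^T\|u^0(t)\|_{\mathbb{U}}^2\mathrm{d}t\leq\liminf_{n\to\infty}\int_0^T\|u^n(t)\|_{\mathbb{U}}^2\mathrm{d}t$. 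Adding these inequalities yields $\mathcal{F}(x^0,u^0)\leq\liminf_{n\to\infty}\mathcal{F}(x^n,u^n)=m$, while $(x^0,u^0)\in\mathscr{A}_{\text{ad}}$ forces $\mathcal{F}(x^0,u^0)\geq m$; hence $\mathcal{F}(x^0,u^0)=m$ and $(x^0,u^0)$ is the desired optimal pair. The main obstacle is the limit passage in the quadratic terminal term: one must ensure that weak convergence of the controls transfers to weak or strong convergence of the terminal states and then couple it with the correct lower-semicontinuity property of the norm on the Banach space $\mathbb{X}$ — this is exactly where reflexivity of $\mathbb{X}$ and $\mathrm{L}^2(J;\mathbb{U})$, together with the compactness furnished by Lemma \ref{lem2.2}, enter.
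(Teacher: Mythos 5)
Your proof is correct; the paper itself gives no argument for this theorem but simply cites \cite{MTM}, and the proof there is exactly this direct method — a minimizing sequence, boundedness and hence weak sequential compactness of the controls in the Hilbert space $\mathrm{L}^2(J;\mathbb{U})$, passage to the limit in the terminal state through the bounded (indeed compact, by the $\varepsilon$-truncation argument you sketch) operator $L_T$, and weak lower semicontinuity of the convex cost. Nothing essential is missing, and your observation that weak-to-weak continuity of $L_T$ already suffices (compactness being a bonus that upgrades the terminal convergence to strong) is accurate.
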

	\begin{rem}
		Since the cost functional defined in \eqref{eqn:4.5} is convex, the constraint \eqref{linear} is linear and the admissible control class $\mathrm{L}^2(J;\mathbb{U})$ is convex, then the optimal control obtained in Theorem \ref{optimal} is unique. 
	\end{rem}
	\noindent The explicit expression of the optimal control ${u}$ is given by the following lemma:
	\begin{lem}[\cite{MTM}]\label{lem3.1}
			For $\lambda>0$, the optimal control $u_\lambda$ satisfying \eqref{linear} and minimizing the cost functional \eqref{eqn:4.5} is given by
			\begin{align*}
			u_\lambda(t)=\mathrm{B}^{*}\mathrm{U}^{*}(T,t)\mathcal{J}\left[\mathrm{R}(\lambda,\Psi_0^{T})p(x(\cdot))\right],\ t\in J,
			\end{align*}
			where
			\begin{align*}
			p(x(\cdot))=x_{T}-\mathrm{U}(T,0)\phi(0).
			\end{align*}
	\end{lem}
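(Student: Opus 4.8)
The plan is to characterize the unique minimizer of the convex cost functional \eqref{eqn:4.5} through its first-order optimality condition and then to recast that condition in terms of the resolvent operator introduced in \eqref{opt:4.2}. Substituting the mild-solution formula for \eqref{linear} evaluated at $t=T$, namely $x_\lambda(T)=\mathrm{U}(T,0)\phi(0)+L_Tu_\lambda$, the functional becomes a functional of the control alone,
\[
\mathcal{F}(u)=\left\|\mathrm{U}(T,0)\phi(0)+L_Tu-x_{T}\right\|_{\mathbb{X}}^{2}+\lambda\int_{0}^{T}\left\|u(t)\right\|_{\mathbb{U}}^{2}\mathrm{d}t,
\]
which is strictly convex and coercive in $u$. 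Its unique minimizer $u_\lambda$ exists by Theorem \ref{optimal} together with the subsequent uniqueness remark, so it suffices to compute the stationarity (Euler--Lagrange) condition, which for a convex Gâteaux-differentiable functional is both necessary and sufficient for optimality.

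First I would compute the Gâteaux derivative of $\mathcal{F}$ at $u_\lambda$ in an arbitrary direction $v\in\mathrm{L}^{2}(J;\mathbb{U})$. The only delicate point is differentiating the term $\|\cdot\|_{\mathbb{X}}^{2}$: because $\mathbb{X}$ has been renormed so that $\mathbb{X}^{*}$ is strictly convex, the duality map $\mathcal{J}$ is single-valued and the map $y\mapsto\|y\|_{\mathbb{X}}^{2}$ is Gâteaux differentiable with derivative $2\mathcal{J}[y]$ (equivalently, $\mathcal{J}[y]$ is the single element of $\partial(\tfrac12\|\cdot\|_{\mathbb{X}}^{2})(y)$). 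Writing $\zeta_\lambda:=x_\lambda(T)-x_T$, the condition $\frac{\mathrm{d}}{\mathrm{d}\varepsilon}\mathcal{F}(u_\lambda+\varepsilon v)\big|_{\varepsilon=0}=0$ becomes
\[
\langle L_Tv,\mathcal{J}[\zeta_\lambda]\rangle+\lambda\int_{0}^{T}\left(u_\lambda(t),v(t)\right)\mathrm{d}t=0.
\]
Expanding $L_Tv=\int_{0}^{T}\mathrm{U}(T,t)\mathrm{B}v(t)\mathrm{d}t$ and transferring $\mathrm{U}(T,t)\mathrm{B}$ onto the covector $\mathcal{J}[\zeta_\lambda]$ via the adjoints $\mathrm{B}^{*}$ and $\mathrm{U}^{*}(T,t)$ (the Hilbert space $\mathbb{U}$ being identified with its dual) rewrites the first term as $\int_{0}^{T}\left(v(t),\mathrm{B}^{*}\mathrm{U}^{*}(T,t)\mathcal{J}[\zeta_\lambda]\right)\mathrm{d}t$. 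Since $v$ is arbitrary, the fundamental lemma of the calculus of variations yields the pointwise feedback law $u_\lambda(t)=-\tfrac{1}{\lambda}\mathrm{B}^{*}\mathrm{U}^{*}(T,t)\mathcal{J}[\zeta_\lambda]$ for $t\in J$.

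It then remains to identify $\zeta_\lambda$. Inserting this feedback law into $x_\lambda(T)=\mathrm{U}(T,0)\phi(0)+\int_{0}^{T}\mathrm{U}(T,t)\mathrm{B}u_\lambda(t)\mathrm{d}t$ and recognizing $\Psi_{0}^{T}=\int_{0}^{T}\mathrm{U}(T,t)\mathrm{B}\mathrm{B}^{*}\mathrm{U}^{*}(T,t)\mathrm{d}t$ gives $x_\lambda(T)=\mathrm{U}(T,0)\phi(0)-\tfrac{1}{\lambda}\Psi_{0}^{T}\mathcal{J}[\zeta_\lambda]$, so that with $p(x(\cdot))=x_{T}-\mathrm{U}(T,0)\phi(0)$ one obtains
\[
\lambda\zeta_\lambda+\Psi_{0}^{T}\mathcal{J}[\zeta_\lambda]=-\lambda\,p(x(\cdot)).
\]
This is exactly the resolvent equation \eqref{eqn:4.3} with right-hand side $-\lambda\,p(x(\cdot))$, hence $\zeta_\lambda=\mathrm{R}(\lambda,\Psi_{0}^{T})(-\lambda\,p(x(\cdot)))$. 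Using that $\mathcal{J}$ is homogeneous of degree one (that is, $\mathcal{J}[\alpha y]=\alpha\mathcal{J}[y]$ for every $\alpha\in\mathbb{R}$, an immediate consequence of the defining identities of $\mathcal{J}$ together with its single-valuedness), so too is $\mathrm{R}(\lambda,\Psi_{0}^{T})$, giving $\zeta_\lambda=-\lambda\,\mathrm{R}(\lambda,\Psi_{0}^{T})p(x(\cdot))$ and therefore $\mathcal{J}[\zeta_\lambda]=-\lambda\,\mathcal{J}[\mathrm{R}(\lambda,\Psi_{0}^{T})p(x(\cdot))]$. Substituting this into the feedback law cancels the factors $-\lambda$ and produces precisely the claimed expression.

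The main obstacle I anticipate is the Banach-space differentiation step: justifying rigorously that $\|\cdot\|_{\mathbb{X}}^{2}$ is Gâteaux differentiable with derivative $2\mathcal{J}$, and that the resulting variational identity can be localized to a pointwise relation. This hinges on the renorming that makes $\mathbb{X}^{*}$ strictly convex, which is what renders $\mathcal{J}$ single-valued and demicontinuous; in a generic Banach space the subdifferential of the squared norm is genuinely set-valued and the clean feedback formula would fail. Once this point is secured, the algebraic manipulations involving $\Psi_{0}^{T}$, the resolvent equation, and the homogeneity of $\mathcal{J}$ are entirely routine.
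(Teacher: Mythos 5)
Your derivation is correct and is essentially the argument the paper relies on: the lemma is quoted from \cite{MTM}, and the proof there is exactly this linear--quadratic variational computation (G\^ateaux differentiation of the squared norm giving the duality map $\mathcal{J}$, passage to the adjoints $\mathrm{B}^{*}\mathrm{U}^{*}(T,t)$, substitution back into the mild-solution formula to obtain the resolvent equation $\lambda\zeta_\lambda+\Psi_0^T\mathcal{J}[\zeta_\lambda]=-\lambda p$, and cancellation of the factor $-\lambda$ via the homogeneity of $\mathcal{J}$ and of $\mathrm{R}(\lambda,\Psi_0^T)$). Your handling of the one genuinely delicate point --- that strict convexity of $\mathbb{X}^{*}$ makes $\mathcal{J}$ single-valued and $\|\cdot\|_{\mathbb{X}}^{2}$ G\^ateaux differentiable with derivative $2\mathcal{J}$ --- is also consistent with the paper's setup.
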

	Next, we state a theorem to investigate the approximate controllability of the linear control system \eqref{linear}.  
	\begin{theorem}[Theorem 3.2, \cite{SM}]\label{thm4.2}
			The following statements are equivalent:
			\begin{enumerate}
				\item [(i)]The linear control system \eqref{linear} is approximately controllable on $J$.
				\item [(ii)] If $x^*\in \mathbb{X}^{*}$, we have $\mathrm{B}^{*}\mathrm{U}^*(T,t)x^*=0$, for all $t\in J,$ then $x^*=0.$
				\item [(iii)] The Assumption (H0) holds. 
			\end{enumerate}
	\end{theorem}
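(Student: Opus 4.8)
The goal is to prove the equivalence of three statements concerning the approximate controllability of the linear system \eqref{linear}. The standard strategy is to establish a cycle of implications, say $(i)\Leftrightarrow(ii)$ and $(ii)\Leftrightarrow(iii)$, exploiting the resolvent operator machinery and the duality mapping $\mathcal{J}$ introduced in \eqref{opt:4.2}. Since this is cited as Theorem 3.2 of \cite{SM}, the core ideas are classical, but I would reproduce the argument adapted to the reflexive Banach space setting with the duality-mapping-based resolvent.

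\medskip

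The plan is to first prove $(i)\Leftrightarrow(ii)$. The reachable set of \eqref{linear} from the fixed initial datum is the affine set $\mathrm{U}(T,0)\phi(0)+\mathrm{Range}(L_T)$, where $L_T$ is defined in \eqref{opt:4.2}. Thus approximate controllability is equivalent to $\overline{\mathrm{Range}(L_T)}=\mathbb{X}$, which by the Hahn--Banach theorem holds if and only if the annihilator of $\mathrm{Range}(L_T)$ is trivial. For $x^*\in\mathbb{X}^*$, the condition $\langle L_Tu,x^*\rangle=0$ for all $u\in\mathrm{L}^2(J;\mathbb{U})$ unwinds, using the adjoint, to $\int_0^T(u(t),\mathrm{B}^*\mathrm{U}^*(T,t)x^*)\,\mathrm{d}t=0$ for all $u$, which forces $\mathrm{B}^*\mathrm{U}^*(T,t)x^*=0$ for a.e.\ $t\in J$; by strong continuity this holds for all $t\in J$. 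Hence the annihilator is trivial precisely when statement $(ii)$ holds, giving $(i)\Leftrightarrow(ii)$.

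\medskip

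Next I would establish $(ii)\Leftrightarrow(iii)$, which is the heart of the argument and where the duality mapping enters. The key computation is to test the defining equation \eqref{eqn:4.3} for $z_\lambda(h)$ against $\mathcal{J}[z_\lambda(h)]$, yielding
\begin{align*}
\lambda\left\|z_\lambda(h)\right\|_{\mathbb{X}}^2+\bigl\langle\Psi_0^T\mathcal{J}[z_\lambda(h)],\mathcal{J}[z_\lambda(h)]\bigr\rangle=\lambda\langle h,\mathcal{J}[z_\lambda(h)]\rangle,
\end{align*}
where the middle term equals $\left\|(L_T)^*\mathcal{J}[z_\lambda(h)]\right\|_{\mathbb{U}}^2=\left\|\mathrm{B}^*\mathrm{U}^*(T,\cdot)\mathcal{J}[z_\lambda(h)]\right\|_{\mathrm{L}^2(J;\mathbb{U})}^2$. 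For the direction $(ii)\Rightarrow(iii)$, one uses the bound \eqref{eqn:4.4}, reflexivity of $\mathbb{X}$ to extract a weakly convergent subsequence of $z_\lambda(h)$ as $\lambda\downarrow0$, and the demicontinuity of $\mathcal{J}$. Dividing by $\lambda$ and passing to the limit shows the quadratic term forces $\mathrm{B}^*\mathrm{U}^*(T,\cdot)\mathcal{J}[z_\lambda]\to0$, so that any weak limit point $z_0$ satisfies $\mathrm{B}^*\mathrm{U}^*(T,t)\mathcal{J}[z_0]=0$; statement $(ii)$ then gives $\mathcal{J}[z_0]=0$, hence $z_0=0$, and one upgrades weak to strong convergence to conclude $z_\lambda(h)\to0$ strongly. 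The converse $(iii)\Rightarrow(ii)$ is obtained by contraposition: if some nonzero $x^*$ annihilates the range, one shows $z_\lambda$ cannot converge to zero.

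\medskip

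The main obstacle, and the point requiring most care, is the nonlinearity of the duality mapping $\mathcal{J}$ in the genuinely Banach (non-Hilbert) setting: the resolvent $\mathrm{R}(\lambda,\Psi_0^T)=(\lambda\mathrm{I}+\Psi_0^T\mathcal{J})^{-1}$ is not linear, so one cannot manipulate it as in the Hilbert-space theory of Mahmudov. The delicate steps are justifying the passage to the limit using only weak convergence plus the demicontinuity of $\mathcal{J}$ (rather than continuity), and upgrading weak convergence of $z_\lambda(h)$ to strong convergence, which typically invokes the fact that in the renormed space $\mathbb{X}$ (with $\mathbb{X}^*$ strictly convex) $\mathcal{J}$ enjoys better monotonicity and continuity properties. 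Care is also needed to ensure that the subsequential limits are independent of the subsequence so that the full family $z_\lambda(h)$ converges; this follows from uniqueness of the solution of \eqref{eqn:4.3} guaranteed in the preceding remark. Since the full details are carried out in \cite{SM}, I would cite that reference for the technical limit arguments while presenting the computations above explicitly.
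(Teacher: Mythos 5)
The paper does not prove this theorem at all: it is imported verbatim as Theorem 3.2 of \cite{SM}, so there is no in-paper argument to compare yours against. Judged on its own, your overall architecture --- $(i)\Leftrightarrow(ii)$ via the identification of the reachable set with $\mathrm{U}(T,0)\phi(0)+\mathrm{Range}(L_T)$ and a Hahn--Banach annihilator argument, and $(ii)\Leftrightarrow(iii)$ via pairing \eqref{eqn:4.3} with $\mathcal{J}[z_\lambda(h)]$ to get the energy identity $\lambda\left\|z_\lambda\right\|_{\mathbb{X}}^2+\left\|\mathrm{B}^*\mathrm{U}^*(T,\cdot)\mathcal{J}[z_\lambda]\right\|_{\mathrm{L}^2(J;\mathbb{U})}^2=\lambda\langle h,\mathcal{J}[z_\lambda]\rangle$ --- is the standard Mahmudov-type route and is the right one here.

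There is, however, one genuine misstep in your $(ii)\Rightarrow(iii)$ direction. You extract a weakly convergent subsequence $z_\lambda\xrightharpoonup{w}z_0$ in $\mathbb{X}$ and then assert that the limit point satisfies $\mathrm{B}^*\mathrm{U}^*(T,t)\mathcal{J}[z_0]=0$, invoking demicontinuity of $\mathcal{J}$. Demicontinuity is \emph{norm}-to-weak continuity; it does not allow you to pass from $z_\lambda\xrightharpoonup{w}z_0$ to $\mathcal{J}[z_\lambda]\xrightharpoonup{w}\mathcal{J}[z_0]$, and the duality map is in general not weak-to-weak continuous, so the identification of the limit as $\mathcal{J}[z_0]$ does not follow from your stated tools. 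The fix is to bypass $z_0$ entirely: by \eqref{eqn:4.4} the family $\mathcal{J}[z_\lambda]$ is bounded in the reflexive space $\mathbb{X}^*$, so extract $\mathcal{J}[z_\lambda]\xrightharpoonup{w}y^*$ directly; since $(L_T)^*$ is a bounded linear (hence weakly continuous) operator and the energy identity forces $\left\|(L_T)^*\mathcal{J}[z_\lambda]\right\|_{\mathrm{L}^2(J;\mathbb{U})}^2\le\lambda\left\|h\right\|_{\mathbb{X}}^2\to0$, weak lower semicontinuity of the norm gives $(L_T)^*y^*=0$, and $(ii)$ yields $y^*=0$. Strong convergence of $z_\lambda$ then comes for free from the same identity, $\left\|z_\lambda\right\|_{\mathbb{X}}^2\le\langle h,\mathcal{J}[z_\lambda]\rangle\to\langle h,y^*\rangle=0$, with the usual subsequence argument giving convergence of the full family. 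Your contrapositive for $(iii)\Rightarrow(ii)$ is fine and can be made concrete by taking $h$ with $\mathcal{J}[h]=x^*$, for which $z_\lambda(h)=h$ for every $\lambda$.
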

	\subsection{Approximate controllability of a semilinear system}
	Here, we establish the semilinear system \eqref{eqn:1.1}  is approximate controllable, whenever the corresponding linear system \eqref{linear} is approximately controllable. To achieve this goal, we first show   the existence of a fixed point of the operator $ \Gamma_\lambda : \mathrm{E}\multimap\mathrm{E} $ (the space $\mathrm{E}$ specified in \eqref{set}) as	
	\begin{align*}
	\Gamma_\lambda(x)&=\bigg\{z\in \mathrm{E}: z(t)=\mathrm{U}(t,0)\phi(0) +\int_{0}^{t}\mathrm{U}(t,s)[\mathrm{B}u_\lambda(s)+f(s)]\mathrm{d}s\\&\qquad+\sum_{0<\tau_{k}<t}\mathrm{U}(t,\tau_{k})I_{k}(\tilde{x}(\tau_{k})), f\in \mathrm{N}_{\mathrm{F}}(x) \bigg\},
	\end{align*} 
	where $\lambda>0$ and  $x_{T}\in \mathbb{X}$.	The control $u_{\lambda}(\cdot)$ is defined as
	\begin{align}
	u_{\lambda}(t)&=\mathrm{B}^{*}\mathrm{U}^{*}(T,t)\mathcal{J}\left[\mathrm{R}(\lambda,\Psi_{0}^{T})g(x(\cdot))\right],\label{eqn:cont}\end{align}
	where 
	\begin{align*}
	g(x(\cdot))&=x_{T}-\mathrm{U}(T,0)\phi(0)-\int^{T}_{0}\mathrm{U}(T,s)f(s)\mathrm{d}s-\sum_{k=1}^{m}\mathrm{U}(T,\tau_{k})I_{k}(\tilde{x}(\tau_{k})),
	\end{align*}
	and $\tilde{x}:(-\infty,T]\rightarrow\mathbb{X}$  such that $\tilde{x}_{0}=\phi$ and $\tilde{x}=x$ on $J$. The existence of a fixed point of the operator $\Gamma_\lambda$ guarantees that the system \eqref{eqn:1.1} has a mild solution with the control \eqref{eqn:cont}. 
	\begin{theorem}\label{thm4.3}
		Let the Assumptions (R1)-(R4) and (H1)-(H5) hold true. Then  for every $ \lambda>0 $ and for fixed $x_{T}\in\mathbb{X},$ the operator $ \Gamma_\lambda : \mathrm{E}\multimap\mathrm{E}$ has a fixed point.
	\end{theorem}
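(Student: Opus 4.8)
The plan is to mimic the structure of the proof of Theorem~\ref{thm3.2} almost verbatim, since $\Gamma_\lambda$ differs from $\Gamma$ only through the replacement of the fixed control $u$ by the state-dependent control $u_\lambda$ defined in \eqref{eqn:cont}. I would first establish that $u_\lambda$ is well-behaved: using the estimate \eqref{eqn:4.4} together with (H1), the boundedness of $\mathrm{U}^*(T,\cdot)$, the demicontinuity of $\mathcal{J}$, and (H4)--(H5), I would obtain a uniform bound of the form
\begin{align*}
\left\|u_\lambda\right\|_{\mathrm{L}^2(J;\mathbb{U})}\le \frac{M_B C\sqrt{T}}{\lambda}\left\|g(x(\cdot))\right\|_{\mathbb{X}},
\end{align*}
where $\left\|g(x(\cdot))\right\|_{\mathbb{X}}$ is itself bounded in terms of $\left\|x_T\right\|_{\mathbb{X}}$, $\left\|\phi(0)\right\|_{\mathbb{X}}$, $\left\|\gamma\right\|_{\mathrm{L}^1}$ and $\sum_k d_k$, independently of $x$. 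This uniform bound is the key observation that lets every estimate in the proof of Theorem~\ref{thm3.2} go through with $u$ replaced by $u_\lambda$.

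Next I would run through the same six steps as in Theorem~\ref{thm3.2}. Convexity (Step 1) is unchanged because $u_\lambda$ does not depend on the selector $f$ in a way that breaks the convex-combination argument—though here one must be slightly careful, since $g(x(\cdot))$ and hence $u_\lambda$ \emph{do} depend on $f$ through the term $\int_0^T\mathrm{U}(T,s)f(s)\mathrm{d}s$. I would therefore verify convexity by checking that replacing $f$ by $\lambda f_1+(1-\lambda)f_2$ produces exactly the control $\lambda u_\lambda^{(1)}+(1-\lambda)u_\lambda^{(2)}$ by linearity of $g$ in $f$, so the convex combination of the two image points again lies in $\Gamma_\lambda(x)$. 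Boundedness (Step 2) and equicontinuity (Step 3) follow from the uniform control bound above together with Lemma~\ref{lem2.1}, exactly as before. Complete continuity (Step 4) again uses the compactness of $\mathrm{U}(t,s)$ for $t-s>0$ via the operator $\Gamma_\lambda^\varepsilon$; the added $\mathrm{B}u_\lambda$ term is handled just like the $\mathrm{B}u$ term, since it is a fixed $\mathrm{L}^2$ function once $x$ is frozen.

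The main obstacle is the closed-graph/upper-semicontinuity argument (Step 5), because now the control itself varies along the sequence: for $x^n\to x^*$ and $z^n\in\Gamma_\lambda(x^n)$ with selectors $f_n$, the associated controls $u_\lambda^n$ depend on $f_n$ through $g$. The plan is to first extract, via (H4) and the Dunford--Pettis theorem, a subsequence with $f_n\xrightharpoonup{w}f^*$ in $\mathrm{L}^1(J;\mathbb{X})$, with $f^*\in\mathrm{N}_{\mathrm{F}}(x^*)$ by Lemma~\ref{thm:3.1}. Then I would show $g(x^n(\cdot))\to g(x^*(\cdot))$ strongly in $\mathbb{X}$, using the continuity of $I_k$ (H5), the convergence $x^n\to x^*$, and the fact that $h\mapsto\int_0^T\mathrm{U}(T,s)h(s)\mathrm{d}s$ maps the integrably bounded weakly convergent sequence $f_n$ to a convergent sequence (a single-time instance of Corollary~\ref{cor:3.1}). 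The demicontinuity of $\mathcal{J}$ and continuity of $\mathrm{R}(\lambda,\Psi_0^T)$ then give $u_\lambda^n\to u_\lambda^*$ in an appropriate sense, and Corollary~\ref{cor:3.1} applied to the $f_n$ term lets me pass to the limit in the integral representation of $z^n$, concluding $z^*\in\Gamma_\lambda(x^*)$. Finally, Step 6 (boundedness of the set $\mathcal{D}=\{x:x\in\kappa\Gamma_\lambda(x),\ 0<\kappa<1\}$) is immediate from the uniform bounds, and an application of Theorem~\ref{thm:2.1} yields the fixed point.
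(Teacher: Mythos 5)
Your overall strategy coincides with the paper's: its proof also runs through the six steps of Theorem \ref{thm3.2}, with the pointwise bound $\left\|u_\lambda(t)\right\|_{\mathbb{U}}\le CMM_B/\lambda$ (obtained from \eqref{eqn:4.4} together with (H1), (H4), (H5)) playing exactly the role of your uniform control bound in Steps 2--4 and 6. The one place where your proposal is genuinely underspecified is the closed-graph argument, at the point you describe as ``$u_\lambda^n\to u_\lambda^*$ in an appropriate sense.'' Demicontinuity of $\mathcal{J}$ yields only the \emph{weak} convergence $\mathcal{J}\left[\mathrm{R}(\lambda,\Psi_0^T)g_n\right]\xrightharpoonup{w}\mathcal{J}\left[\mathrm{R}(\lambda,\Psi_0^T)g^*\right]$ in $\mathbb{X}^*$, hence only weak convergence of $u_{\lambda,n}(t)$; but to conclude $z^n\to z^0$ in $\mathrm{PC}(J;\mathbb{X})$ you need \emph{strong} convergence of $\int_0^t\mathrm{U}(t,s)\mathrm{B}u_{\lambda,n}(s)\mathrm{d}s$, and weak convergence of the integrand does not give this for free. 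The paper closes this gap by invoking the compactness of the operator $\varphi\mapsto\int_0^t\mathrm{U}(t,s)\mathrm{B}\mathrm{B}^*\mathrm{U}^*(t,s)\varphi(s)\mathrm{d}s$ from $\mathrm{L}^2(J;\mathbb{X})$ to $\mathrm{C}(J;\mathbb{X})$ (Lemma 4.1 of \cite{MTM}); alternatively, since $\mathcal{J}\left[\mathrm{R}(\lambda,\Psi_0^T)g_n\right]$ is a single weakly convergent element of $\mathbb{X}^*$ (independent of $t$), the sequence $\mathrm{B}u_{\lambda,n}$ is integrably bounded and weakly convergent in $\mathrm{L}^1(J;\mathbb{X})$, so Corollary \ref{cor:3.1} --- which you propose to apply only to the $f_n$ term --- would also do the job if applied to the control term. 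Either way, a compactness ingredient must be named here; without it the step does not close.

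One further caveat on your Step 1: you correctly observe that $u_\lambda$ depends on the selector $f$ through $g$, but resolving convexity ``by linearity of $g$ in $f$'' requires in addition that $f\mapsto\mathcal{J}\left[\mathrm{R}(\lambda,\Psi_0^T)g\right]$ be affine, and in a general Banach space neither the duality map $\mathcal{J}$ nor $\mathrm{R}(\lambda,\Psi_0^T)=(\lambda\mathrm{I}+\Psi_0^T\mathcal{J})^{-1}$ is linear (they are only when $\mathbb{X}$ is a Hilbert space). The paper itself glosses over this by referring back to Step 1 of Theorem \ref{thm3.2}, so this is a shared weakness rather than a defect unique to your argument, but your proposed justification as stated does not suffice.
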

	\begin{proof} We use  Theorem \ref{thm:2.1} to show that the multi-valued operator $\Gamma_\lambda$ has a fixed point.  Similar to the proof of Theorem \ref{thm3.2}, we divide the proof in the following steps.
		\vskip 0.1in 
		\noindent\textbf{Step (1): } \emph{$\Gamma_\lambda$ is convex for each $x\in \mathrm{E}$}. The proof of convexity of the operator $\Gamma_\lambda(x),$ for any $x\in \mathrm{E}$ can be carried out in a similar manner as in Step 1 of the proof of Theorem \ref{thm3.2}.
		\vskip 0.1in 
		\noindent\textbf{Step (2): } \emph{$\Gamma_\lambda$ is bounded}. For this we must show that for any $q>0$, the operator $\Gamma_\lambda$ maps $\mathrm{E}_q$  into a bounded subset of $\mathrm{E}$, where $\mathrm{E}_q:=\{x\in\mathrm{E}:\left\|x\right\|_{\mathrm{PC}_{\mathcal{B}}}\le q\}$. First, we estimate $\| u_{\lambda}(t)\|_{\mathbb{U}},$ by using  (\ref{eqn:cont}), \eqref{eqn:4.4}, and  Assumption \ref{as3.1} as
		\begin{align}\label{48}
		\left\|u_{\lambda}(t)\right\|_{\mathbb{U}}&=\left\|\mathrm{B}^{*}\mathrm{U}^{*}(T,t)\mathcal{J}[\mathrm{R}(\lambda,\Psi_{0}^{T})g(x(\cdot))]\right\|_{\mathbb{U}}\nonumber\\&\leq\frac{1}{\lambda}\left\|\mathrm{B}^*\right\|_{\mathcal{L}(\mathbb{X}^{*},\mathbb{U})}\left\|\mathrm{U}^{*}(T,t)\right\|_{\mathcal{L}(\mathbb{X}^{*})}\left\|\mathcal{J}[\lambda\mathrm{R}(\lambda,\Psi_{0}^{T})g(x(\cdot))]\right\|_{\mathbb{X}^{*}}\nonumber\\&\leq \frac{CM_{B}}{\lambda}\left\|g(x(\cdot))\right\|_{\mathbb{X}}\nonumber\\&\leq\frac{CM_{B}}{\lambda}\bigg(\left\|x_T\right\|_{\mathbb{X}}+\left\|\mathrm{U}(T,0)\right\|_{\mathcal{L}(\mathbb{X})}\left\|\phi(0)\right\|_{\mathbb{X}}+\int_0^T\left\|\mathrm{U}(T,s)\right\|_{\mathcal{L}(\mathbb{X})}\left\|f(s)\right\|_{\mathbb{X}}\mathrm{d}s\nonumber\\&\qquad \qquad +\sum_{k=1}^{m}\left\|\mathrm{U}(T,\tau_{k})\right\|_{\mathcal{L}(\mathbb{X})}\left\|I_{k}(\tilde{x}(\tau_{k}))\right\|_{\mathbb{X}}\bigg)\nonumber\\&\leq\frac{CM_{B}}{\lambda}\left(\left\|x_T\right\|_{\mathbb{X}}+C\left\|\phi(0)\right\|_{\mathbb{X}}+C\int_0^T\gamma(s)\mathrm{d}s+C\sum_{k=1}^{m}d_k\right)\nonumber\\&\leq\frac{CM_{B}}{\lambda}\left(\left\|x_T\right\|_{\mathbb{X}}+C\left\|\phi(0)\right\|_{\mathbb{X}}+C\left\|\gamma\right\|_{\mathrm{L}^1(J;[0,+\infty))}+C\sum_{k=1}^{m}d_k\right)\nonumber\\&\leq\frac{CMM_{B}}{\lambda},
		\end{align}
		for all $t\in J$, where $M=\left\|x_T\right\|_{\mathbb{X}}+C\left\|\phi(0)\right\|_{\mathbb{X}}+C\left\|\gamma\right\|_{\mathrm{L}^1(J;[0,+\infty))}+C\sum_{k=1}^{m}d_k$. Let us take $x\in \mathrm{E}_q$ and $z\in\Gamma_\lambda(x)$, then there exists $f\in\mathrm{N}_\mathrm{F}(x)$ such that 
		\begin{align}\label{eqn:4.10}
		z(t)=\mathrm{U}(t,0)\phi(0)+\int_{0}^{t}\mathrm{U}(t,s)\left[\mathrm{B}u_\lambda(s)+f(s)\right]\mathrm{d}s+\sum_{0<\tau_{k}<t}\mathrm{U}(t,\tau_{k})I_{k}(\tilde{x}(\tau_{k})).
		\end{align}
		We now compute 
		\begin{align*}
		&\left\|z(t)\right\|_{\mathbb{X}}\\&\le\left\|\mathrm{U}(t,0)\phi(0)\right\|_{\mathbb{X}}+\int_{0}^{t}\left\|\mathrm{U}(t,s)\left[\mathrm{B}u_\lambda(s)+f(s)\right]\mathrm{d}s\right\|_{\mathbb{X}}+\sum_{0<\tau_{k}<t}\left\|\mathrm{U}(t,\tau_{k})I_{k}(\tilde{x}(\tau_{k}))\right\|_{\mathbb{X}}\nonumber\\&\le \left\|\mathrm{U}(t,0)\right\|_{\mathcal{L}(\mathbb{X})}\left\|\phi(0)\right\|_{\mathbb{X}}+\int_{0}^{t}\left\|\mathrm{U}(t,s)\right\|_{\mathcal{L}(\mathbb{X})}\left[\left\|\mathrm{B}\right\|_{\mathcal{L}(\mathbb{U},\mathbb{X})}\left\|u_\lambda(s)\right\|_{\mathbb{U}}+\left\|f(s)\right\|_{\mathbb{X}}\right]\mathrm{d}s\nonumber\\&\quad+\sum_{0<\tau_{k}<t}\left\|\mathrm{U}(t,\tau_{k})\right\|_{\mathcal{L}(\mathbb{X})}\left\|I_{k}(\tilde{x}(\tau_{k}))\right\|_{\mathbb{X}}\nonumber\\&\le C\left\|\phi(0)\right\|_{\mathbb{X}}+\frac{C^2M_B^2MT}{\lambda}+C\left\|\gamma\right\|_{\mathrm{L}^1(J;[0, +\infty))}+C\sum_{k=1}^{m}d_k,
		\end{align*}
		for all $t\in J$. Here, we  used the condition (1) of Lemma \ref{lem2.1}, Assumption \ref{as3.1} \textit{(H1)}, \textit{(H4)} and \textit{(H5)}. Thus, the operator $\Gamma_\lambda$ is bounded. 
		\vskip 0.1in 
		\noindent\textbf{Step (3):} \emph{$\Gamma_\lambda$ is completely continuous}.  Let us take $x\in \mathrm{E}_q$ and $z\in\Gamma_\lambda(x)$, then there exists $f\in\mathrm{N}_\mathrm{F}(x)$ such that \eqref{eqn:4.10} holds. Using the estimate \eqref{48} of $\left\|u_\lambda(t)\right\|_{\mathbb{U}},$ for all $t\in J$, and then following the Steps (3) and (4)  in the proof of Theorem \ref{thm3.2}, we obtain that the multi-valued map $\Gamma_\lambda$ is completely continuous.
		\vskip 0.1in 
		\noindent\textbf{Step (4):} \emph{$\Gamma_\lambda$ is upper semicontinuous}.  From Step (3), it is clear that the operator $\Gamma_\lambda$  is completely continuous.  To prove the operator $\Gamma_\lambda$ is u.s.c, it suffices to prove that $\Gamma_\lambda$ has a closed graph (see Remark \ref{rem2.1}). For this, we consider a sequences $x^n\to x^*$ in $\mathrm{E}$ and $z^n\to z^*$ in $\mathrm{E}$ with $z^n\in\Gamma_\lambda(x^n)$, then there exist $f_n\in \mathrm{N}_\mathrm{F}(x^n) $ such that 
		\begin{align}\label{eqn:4.12}
		z^n(t)= \mathrm{U}(t,0)\phi(0)+\int_{0}^{t}\mathrm{U}(t,s)\left[\mathrm{B}u_{\lambda,n}(s)+f_n(s)\right]+\sum_{0<\tau_{k}<t}\mathrm{U}(t,\tau_{k})I_{k}(\tilde{x}(\tau_{k})), 
		\end{align}
		where 
		\begin{align*}
		u_{\lambda,n}(t)&=\mathrm{B}^{*}\mathrm{U}^{*}(T,t)\mathcal{J}\left[\mathrm{R}(\lambda,\Psi_{0}^{T})g_n(x(\cdot))\right],\end{align*}
		with
		\begin{align*}
		g_n(x(\cdot))&=x_{T}-\mathrm{U}(T,0)\phi(0)-\int^{T}_{0}\mathrm{U}(T,s)f_n(s)\mathrm{d}s-\sum_{k=1}^{m}\mathrm{U}(T,\tau_{k})I_{k}(\tilde{x}(\tau_{k})).
		\end{align*}
		It is straightforward by the Assumption \ref{as3.1}  \textit{(H4)} that the sequence $\{f_n\}_{n\ge 1}$ is integrably bounded. Since the multi-valued map $\mathrm{F}$ is weakly compact valued and $f_n(t)\in \mathrm{F}(t, \tilde{x_t^n})$, then by invoking the Dunford-Pettis theorem, we see that $\{f_n\}_{n\ge 1}$ is relatively weakly compact in $\mathrm{L}^1(J;\mathbb{X})$. Hence, we can find a subsequence of $\{f_n\}_{n\ge 1}$ relabeled as $\{f_n\}_{n\ge 1}$ such that 
		\begin{align}\label{eqn:4.13}
		f_n\xrightharpoonup{w}  f^{*} \ \text{in}\ \mathrm{L}^1(J;\mathbb{X}), \ \text{ as }\ n\to\infty. 
		\end{align}
		Let us define 
		\begin{align*}
		u^*_{\lambda}(t)&=\mathrm{B}^{*}\mathrm{U}^{*}(T,t)\mathcal{J}\left[\mathrm{R}(\lambda,\Psi_{0}^{T})g^*(x(\cdot))\right],\end{align*}
		where
		\begin{align*}
		g^*(x(\cdot))&=x_{T}-\mathrm{U}(T,0)\phi(0)-\int^{T}_{0}\mathrm{U}(T,s)f^{*}(s)\mathrm{d}s-\sum_{k=1}^{m}\mathrm{U}(T,\tau_{k})I_{k}(\tilde{x}(\tau_{k})).
		\end{align*}
		By using \eqref{eqn:4.4}, \eqref{eqn:4.13} and Corollary \ref{cor:3.1}, we deduce that
		\begin{align*}
		\left\|\mathrm{R}(\lambda,\Psi_{0}^{T})g_n(x(\cdot))-\mathrm{R}(\lambda,\Psi_{0}^{T})g^*(x(\cdot))\right\|_{\mathbb{X}}&=\frac{1}{\lambda}\left\|\lambda\mathrm{R}(\lambda,\Psi_{0}^{T})\left(g_n(x(\cdot))-g^*(x(\cdot))\right)\right\|_{\mathbb{X}}\nonumber\\&\leq\frac{1}{\lambda}\left\|g_n(x(\cdot))-g^*(x(\cdot))\right\|_{\mathbb{X}}\nonumber\\&\leq\left\|\int^{T}_{0}\mathrm{U}(T,s)\left[f_n(s)-f^{*}(s)\right]\mathrm{d}s\right\|_{\mathbb{X}}\nonumber\\&\to0,\ \text{as}\ n\to\infty,
		\end{align*}
		and hence $\mathrm{R}(\lambda,\Psi_{0}^{T})g_n(x(\cdot))\to \mathrm{R}(\lambda,\Psi_{0}^{T})g^*(x(\cdot))$ in $\mathbb{X}$ as $n\to\infty$. Since the mapping $\mathcal{J}:\mathbb{X}\to\mathbb{X}^*$  is  demicontinuous, it is immediate that 
		\begin{align}\label{eqn:4.14}
		\mathcal{J}\left[\mathrm{R}(\lambda,\Lambda_{T})g_n(x(\cdot))\right]\xrightharpoonup{w}\mathcal{J}\left[\mathrm{R}(\lambda,\Lambda_{T})g^*(x(\cdot))\right] \ \text{ as } \ n\to\infty  \ \text{ in }\ \mathbb{X}^*.
		\end{align}
		Since $\mathrm{U}(t,s)$ is compact for $t>s$ and $\mathrm{B}$ is a bounded linear operator from $\mathbb{U}$ to $\mathbb{X}$, we obtain that the operator $\mathrm{U}(t,s)\mathrm{B}\mathrm{B}^*\mathrm{U}^*(t,s),$ $t>s$, is a compact operator from  $\mathbb{X}$ into itself. By using the Lemma 4.1, \cite{MTM}, one can show that the operator $$\varphi\mapsto\int_0^t\mathrm{U}(t,s)\mathrm{B}\mathrm{B}^*\mathrm{U}(t,s)\varphi(s)\mathrm{d}s$$ is a compact operator from $\mathrm{L}^2(J;\mathbb{X})\to\mathrm{C}(J;\mathbb{X})$. Combining these facts with the weak convergence \eqref{eqn:4.14}, we deduce that
		\begin{align}\label{eqn:4.15}
		&\left\|\int_0^t\mathrm{U}(t,s)\mathrm{B}\mathrm{B}^{*}\mathrm{U}^{*}(T,t)\left\{\mathcal{J}\left[\mathrm{R}(\lambda,\Lambda_{T})g_n(x(\cdot))\right]-\mathcal{J}\left[\mathrm{R}(\lambda,\Lambda_{T})g^*(x(\cdot))\right]\right\}\mathrm{d}s\right\|_{\mathbb{X}}\nonumber\\&\to 0 \ \text{ as } \ n\to\infty. 
		\end{align}
		By passing $n\to\infty$ in \eqref{eqn:4.12}, we get
		\begin{align}\label{eqn:4.16}
		z^n(t)\to\mathrm{U}(t,0)\phi(0)+\int_{0}^{t}\mathrm{U}(t,s)\left[\mathrm{B}u^*_\lambda(s)+f^{*}(s)\right]+\sum_{0<\tau_{k}<t}\mathrm{U}(t,\tau_{k})I_{k}(\tilde{x}(\tau_{k}))=z^0(t),\end{align}
		where we  used the convergences \eqref{eqn:4.13}, \eqref{eqn:4.15} and Corollary \ref{cor:3.1}. Since we know that $x^n\to x^*$ in $\mathrm{E}$ and $f_n\in\mathrm{N}_\mathrm{F}(x^n)$,  using \eqref{eqn:4.13} and then applying Lemma \ref{thm:3.1}, we obtain  $f^{*}\in\mathrm{N}_\mathrm{F}(x^*)$, which implies that  $z^0\in\Gamma_\lambda(x^*)$. Moreover, the convergence \eqref{eqn:4.16} ensures that  $z^0(t)= z^*(t),$ for $ t\in J$. Therefore, we have $z^*\in\Gamma_\lambda(x^*)$ and hence, the graph of the operator $\Gamma_\lambda$ is closed.
		\vskip 0.1in 
		\noindent\textbf{Step (6):} \emph{The set $\mathcal{D}_{\lambda}:=\{x\in\mathrm{E}: x\in\kappa\Gamma_\lambda(x), \ 0<\kappa<1\}$ is bounded}. Using the estimate \eqref{48} of $\left\|u_\lambda(t)\right\|_{\mathbb{U}},$ for all $t\in J$ and then following the Step 6 of the proof of Theorem \ref{thm3.2}, we can easily verify that the set $\mathcal{D}_{\lambda}$ is bounded.
		
		
		Then, Theorem \ref{thm:2.1} yields that the operator $\Gamma_\lambda$ has a fixed point, which implies that the system \eqref{eqn:1.1} has a mild solution.
	\end{proof}
	\begin{theorem}\label{thm4.4}
		Let the Assumptions (R1)-(R4), (H0)-(H5) hold true.  Then, the system (\ref{eqn:1.1}) is approximately controllable on $J$.
	\end{theorem}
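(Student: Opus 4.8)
The plan is to run, for each $\lambda>0$, the mild solution furnished by Theorem \ref{thm4.3} with the control $u_\lambda$ from \eqref{eqn:cont}, and then let $\lambda\downarrow0$ to steer the terminal value to an arbitrary target. First I would fix $x_T\in\mathbb{X}$ and write $x^\lambda$ for a fixed point of $\Gamma_\lambda$, with an associated selection $f^\lambda\in\mathrm{N}_{\mathrm{F}}(x^\lambda)$. Inserting $u_\lambda$ into the representation of $x^\lambda(T)$ and pulling the constant vector $\mathcal{J}[\mathrm{R}(\lambda,\Psi_0^T)g(x^\lambda(\cdot))]$ out of the integral, the relation $\int_0^T\mathrm{U}(T,s)\mathrm{B}\mathrm{B}^*\mathrm{U}^*(T,s)\,\mathrm{d}s=\Psi_0^T$ yields
\[
x^\lambda(T)-x_T=-g(x^\lambda(\cdot))+\Psi_0^T\mathcal{J}\!\left[\mathrm{R}(\lambda,\Psi_0^T)g(x^\lambda(\cdot))\right].
\]
Setting $v_\lambda=\mathrm{R}(\lambda,\Psi_0^T)g(x^\lambda(\cdot))$, the defining identity $\lambda v_\lambda+\Psi_0^T\mathcal{J}[v_\lambda]=g(x^\lambda(\cdot))$ gives $\Psi_0^T\mathcal{J}[v_\lambda]=g(x^\lambda(\cdot))-\lambda v_\lambda$, and the expression above collapses to
\[
x^\lambda(T)-x_T=-\lambda\mathrm{R}(\lambda,\Psi_0^T)g(x^\lambda(\cdot))=-z_\lambda\!\left(g(x^\lambda(\cdot))\right),
\]
so the whole theorem reduces to proving that $\left\|z_\lambda(g(x^\lambda(\cdot)))\right\|_{\mathbb{X}}\to0$ as $\lambda\downarrow0$.

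The decisive point is that $\{g(x^\lambda(\cdot))\}_{\lambda>0}$ is bounded in $\mathbb{X}$ \emph{uniformly} in $\lambda$: the estimate carried out in Step (2) of Theorem \ref{thm4.3} gives $\left\|g(x^\lambda(\cdot))\right\|_{\mathbb{X}}\le M$, where $M$ is independent of $\lambda$. By (H4) the selections $\{f^\lambda\}$ are integrably bounded, hence relatively weakly compact in $\mathrm{L}^1(J;\mathbb{X})$ by the Dunford--Pettis theorem; along a subsequence I would take $f^\lambda\xrightharpoonup{w}f^*$ in $\mathrm{L}^1(J;\mathbb{X})$. Corollary \ref{cor:3.1} then forces $\int_0^T\mathrm{U}(T,s)f^\lambda(s)\,\mathrm{d}s\to\int_0^T\mathrm{U}(T,s)f^*(s)\,\mathrm{d}s$ strongly, while the compactness of $\mathrm{U}(T,\tau_k)$ (Lemma \ref{lem2.2}) together with $\left\|I_k\right\|_{\mathbb{X}}\le d_k$ places each impulse term $\mathrm{U}(T,\tau_k)I_k(\tilde{x}^\lambda(\tau_k))$ in a relatively compact set. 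Hence, after a further extraction, $g(x^\lambda(\cdot))\to w$ strongly in $\mathbb{X}$ for some $w\in\mathbb{X}$.

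The hard part is then to pass to the limit in $z_\lambda(g(x^\lambda(\cdot)))$, and the obstacle is precisely the nonlinearity of the duality map: since $\mathcal{J}$ is nonlinear, $z_\lambda$ is not linear and one cannot split off the $\lambda$-dependence of its argument (this is the very feature the paper stresses about the Banach-space resolvent). I would instead compare $a_\lambda:=z_\lambda(g(x^\lambda(\cdot)))$ with $c_\lambda:=z_\lambda(w)$. Subtracting their defining equations, dividing by $\lambda$, and pairing with $\mathcal{J}[a_\lambda]-\mathcal{J}[c_\lambda]$, the positivity $\langle\Psi_0^T(\mathcal{J}[a_\lambda]-\mathcal{J}[c_\lambda]),\mathcal{J}[a_\lambda]-\mathcal{J}[c_\lambda]\rangle=\|L_T^{*}(\mathcal{J}[a_\lambda]-\mathcal{J}[c_\lambda])\|^2\ge0$ and the monotonicity of $\mathcal{J}$ leave
\begin{align*}
\langle a_\lambda-c_\lambda,\mathcal{J}[a_\lambda]-\mathcal{J}[c_\lambda]\rangle&\le\left\|g(x^\lambda(\cdot))-w\right\|_{\mathbb{X}}\big(\left\|a_\lambda\right\|_{\mathbb{X}}+\left\|c_\lambda\right\|_{\mathbb{X}}\big)\\&\le 2M\left\|g(x^\lambda(\cdot))-w\right\|_{\mathbb{X}}\to0,
\end{align*}
where $\left\|a_\lambda\right\|_{\mathbb{X}},\left\|c_\lambda\right\|_{\mathbb{X}}\le M$ by \eqref{eqn:4.4}. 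Expanding the left-hand side as $\left\|a_\lambda\right\|_{\mathbb{X}}^2+\left\|c_\lambda\right\|_{\mathbb{X}}^2-\langle a_\lambda,\mathcal{J}[c_\lambda]\rangle-\langle c_\lambda,\mathcal{J}[a_\lambda]\rangle$ and invoking Assumption (H0), which gives $c_\lambda=z_\lambda(w)\to0$ for the \emph{fixed} vector $w$, every term except $\left\|a_\lambda\right\|_{\mathbb{X}}^2$ vanishes in the limit, forcing $\left\|a_\lambda\right\|_{\mathbb{X}}\to0$.

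Consequently $\left\|x^\lambda(T)-x_T\right\|_{\mathbb{X}}=\left\|z_\lambda(g(x^\lambda(\cdot)))\right\|_{\mathbb{X}}\to0$ along the subsequence; since the limit is $0$ regardless of which $w$ was produced, a routine subsequence argument upgrades this to convergence as $\lambda\downarrow0$. As each $x^\lambda(T)\in\mathfrak{R}(T;\phi,u_\lambda)$ and $x_T\in\mathbb{X}$ was arbitrary, I conclude $\overline{\mathfrak{R}(T;\phi,u)}=\mathbb{X}$, that is, the system \eqref{eqn:1.1} is approximately controllable on $J$. I expect the only genuine subtlety to be the third paragraph, where the monotonicity of $\Psi_0^T\mathcal{J}$ and the contraction bound \eqref{eqn:4.4} must compensate for the absence of linearity of the resolvent.
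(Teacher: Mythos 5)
Your proposal follows the same overall architecture as the paper's proof: take the fixed point $x^{\lambda}$ of $\Gamma_\lambda$ from Theorem \ref{thm4.3}, derive the identity $x^{\lambda}(T)-x_T=-\lambda\mathrm{R}(\lambda,\Psi_0^T)g(x^{\lambda}(\cdot))$, use (H4), Dunford--Pettis and Corollary \ref{cor:3.1} together with the compactness of $\mathrm{U}(T,\tau_k)$ to extract a subsequence along which $g(x^{\lambda}(\cdot))\to\omega$ strongly, and then invoke (H0). Where you genuinely diverge is the final limit passage, and your version is the stronger one. The paper concludes by writing
\begin{align*}
\left\|\lambda \mathrm{R}(\lambda, \Psi_{0}^{T})g(x^{\lambda}(\cdot))\right\|_{\mathbb{X}}
\leq \left\|\lambda \mathrm{R}(\lambda, \Psi_{0}^{T})\omega\right\|_{\mathbb{X}}+\left\|\lambda  \mathrm{R}(\lambda, \Psi_{0}^{T})\left(g(x^{\lambda}(\cdot))-\omega\right)\right\|_{\mathbb{X}},
\end{align*}
which is a triangle inequality for the map $h\mapsto\lambda\mathrm{R}(\lambda,\Psi_0^T)h=\lambda(\lambda\mathrm{I}+\Psi_0^T\mathcal{J})^{-1}h$; but this map is \emph{not} linear (nor obviously subadditive) in a Banach space precisely because $\mathcal{J}$ is nonlinear --- the very point the paper emphasizes when distinguishing the Banach-space resolvent from the Hilbert-space one. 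Your replacement --- subtracting the defining equations of $a_\lambda=z_\lambda(g(x^{\lambda}(\cdot)))$ and $c_\lambda=z_\lambda(\omega)$, pairing with $\mathcal{J}[a_\lambda]-\mathcal{J}[c_\lambda]$, discarding the nonnegative term $\left\|L_T^{*}(\mathcal{J}[a_\lambda]-\mathcal{J}[c_\lambda])\right\|^2$, and then combining monotonicity of $\mathcal{J}$ with the uniform bound \eqref{eqn:4.4} and $c_\lambda\to0$ from (H0) --- is a correct quantitative continuity argument that closes this gap. The only cosmetic points worth tightening are: (i) state explicitly that the positive homogeneity of $\mathcal{J}$ is what makes $a_\lambda$ satisfy $\lambda a_\lambda+\Psi_0^T\mathcal{J}[a_\lambda]=\lambda g(x^{\lambda}(\cdot))$ consistently with $a_\lambda=\lambda\mathrm{R}(\lambda,\Psi_0^T)g(x^{\lambda}(\cdot))$, and (ii) note that the impulse terms converge \emph{strongly} because the compact operators $\mathrm{U}(T,\tau_k)$ map the weakly convergent (Banach--Alaoglu) subsequences to norm-convergent ones, which is exactly how the paper handles \eqref{eqn:1.17}. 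With those remarks your argument is complete and, at the critical step, more rigorous than the published one.
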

	\begin{proof}
		Let $x^{\lambda}(\cdot)$ be a fixed point of the operator $\Gamma_\lambda,$ then $x^{\lambda}(\cdot)$ is a mild solution of the equation \eqref{eqn:1.1} with the control 
		\begin{align}
		u_{\lambda}(t)&=\mathrm{B}^{*}\mathrm{U}^{*}(T,t)\mathcal{J}\left[\mathrm{R}(\lambda,\Psi_{0}^{T})g(x^\lambda(\cdot))\right],\label{cont:24}
		\end{align}
		\begin{align*}
		g(x^\lambda(\cdot))&=x_{T}-\mathrm{U}(T,0)\phi(0)-\int^{T}_{0}\mathrm{U}(T,s)f^\lambda(s)\mathrm{d}s-\sum_{k=1}^{m}\mathrm{U}(T,\tau_{k})I_{k}(\tilde{x^\lambda}(\tau_{k})),
		\end{align*}
		where $f^\lambda\in\mathrm{N}_\mathrm{F}(x^\lambda)$.
		That is, $x^{\lambda}(\cdot)$ satisfies 
		\begin{align*}
		x^{\lambda}(t)= \mathrm{U}(t,0)\phi(0) +\int_{0}^{t}\mathrm{U}(t,s)[\mathrm{B}u_{\lambda}(s)+f^\lambda(s)]\mathrm{d}s+\sum_{0<\tau_{k}<t}\mathrm{U}(t,\tau_{k})I_{k}(\tilde{x^{\lambda}}(\tau_{k})),
		\end{align*}
		with the control $u_{\lambda}(\cdot)$ given in \eqref{cont:24}.
		It is easy to verify that 
		\begin{align}\label{eqn:4.18}
		x^{\lambda}(T)= x_{T}-\lambda \mathrm{R}(\lambda, \Psi_{0}^{T})g(x^{\lambda}(\cdot)).
		\end{align}
		Using the Assumption \ref{as3.1} \textit{(H4)}  and Dunford-Pettis theorem, we know that the sequence $\{f^{\lambda}\}$ is weakly compact in $\mathrm{L}^1(J;\mathbb{X})$.  Therefore, there exists a subsequence, still denoted as $\{f^{\lambda}\}$, such that  
		\begin{align*}
		f^\lambda\xrightharpoonup{w}  f^{*} \ \text{ in }\ \mathrm{L}^1(J;\mathbb{X}) \ \mbox{ as }\ \lambda\to0^+.
		\end{align*}
		Moreover, by the Assumption \ref{as3.1} \textit{(H5)}, we infer that the sequence $\{I_{k}(\tilde{x^\lambda})(\tau_{k}) : \lambda > 0\} $ is bounded in $ \mathbb{X}$, for $ k=1,\ldots,m.$ By invoking the Banach-Alaoglu theorem, we can find weakly convergent subsequences relabeled as $\{I_{k}(\tilde{x^\lambda})(\tau_{k}) : \lambda > 0\},$ with pointwise weak limit $\eta_{k}\in\mathbb{X},$ for each $k=1,\ldots,m.$  We now evaluate
		\begin{align}\label{eqn:1.17}
		&\left\|g(x^{\lambda}(\cdot))-\omega\right\|_{\mathbb{X}}\nonumber\\&\leq\left\|\int_{0}^{T}\mathrm{U}(T,s)[f^\lambda(s)-f(s)]\mathrm{d}s\right\|_{\mathbb{X}}+\left\|\sum_{k=1}^{m}\mathrm{U}(T,\tau_k)\left[I_{k}(\tilde{x^{\lambda}}(\tau_{k}))-\eta_k\right]\right\|_{\mathbb{X}}\nonumber\\&\leq\left\|\int_{0}^{T}\mathrm{U}(T,s)[f^\lambda(s)-f(s)]\mathrm{d}s\right\|_{\mathbb{X}}+\sum_{k=1}^{m}\left\|\mathrm{U}(T, \tau_k)\left[I_{k}(\tilde{x^{\lambda}}(\tau_{k}))-\eta_k\right]\right\|_{\mathbb{X}}\nonumber\\&\to0 \text { as } \lambda\to0^{+},
		\end{align}
		where
		\begin{align*}
		\omega= x_{T}-\mathrm{U}(T,0)\phi(0)-\int_{0}^{T}\mathrm{U}(T, s)f(s)ds-\sum_{k=1}^{m}\mathrm{U}(T,\tau_{k})\eta_k.
		\end{align*}
		The first term in the right hand side of \eqref{eqn:1.17} goes to zero using  Corollary \ref{cor:3.1} and  the final term tends to zero using the compactness of the evolution system $\mathrm{U}(t,s)$ for $t-s>0$.
		
		Finally, by using the equality \eqref{eqn:4.18}, we estimate $\left\|x^{\lambda}(T)-x_{T}\right\|_{\mathbb{X}}$ as 
		\begin{align*}
		\nonumber\left\|x^{\lambda}(T)-x_{T}\right\|_{\mathbb{X}}&=\left\|\lambda \mathrm{R}(\lambda, \Psi_{0}^{T})g(x^{\lambda}(\cdot))\right\|_{\mathbb{X}}\\
		&\leq \left\|\lambda \mathrm{R}(\lambda, \Psi_{0}^{T})\omega\right\|_{\mathbb{X}}+\left\|\lambda  \mathrm{R}(\lambda, \Psi_{0}^{T})(g(x^{\lambda}(\cdot))-\omega)\right\|_{\mathbb{X}}.
		\end{align*}
		Using the estimate \eqref{eqn:1.17} and the Assumption \ref{ass4.1} \textit{(H0)},  we easily obtain 
		\begin{align*}
		\left\|x^{\lambda}(T)-x_{T}\right\|_{\mathbb{X}} \rightarrow 0  \mbox{ as }  \lambda \rightarrow 0^{+},
		\end{align*}
		which ensures that the system (\ref{eqn:1.1}) is  approximately controllable.
	\end{proof}
	\section{Application}\label{sec5}\setcounter{equation}{0}
In this section, we provide an example to validate the results obtained in the previous sections.
	\begin{Ex}\label{ex1} Let us consider a control system governed by the functional impulsive differential inclusions of the form
		\begin{equation}\label{57}
		\left\{
		\begin{aligned}
		\frac{\partial y(t,\xi)}{\partial t}&\in a(t)\frac{\partial^2y(t,\xi)}{\partial \xi^2}+\mathrm{B}u(t,\xi)+F\left(t,y(t-r,\xi)\right), \\&\qquad \qquad r>0,\ t\in J=[0,T], \ t\neq \tau_{k},\ k=1,\dots,m, \ \xi\in[0,\pi], \\
		y(t,0)&=0=y(t,\pi), \  t\in J=[0, T], \\
		y(\theta,\xi)&=\phi(\theta,\xi), \ \xi\in[0,\pi], \ \theta\in(-\infty, 0],\\
		\Delta y(\tau_{k},\xi)&= \int_{0}^{\pi}\rho_{k}(\xi, \eta)\cos^{2}(y(\tau_{k}, \eta))d\eta, \ k=1,\ldots,m, \ \xi\in[0,\pi],
		\end{aligned}
		\right.
		\end{equation}
		where $a:J\to\mathbb{R}^+,$ is  H\"older continuous function of order $0<\kappa\leq 1$, that is, there exists a positive constant $C_a$ such that 
		$$\vert a(t)-a(s)\vert\leq C_a\vert t-s \vert^{\kappa}, \ \text{ for all } \ t,s\in J,$$ and $ \rho_{k} \in C([0, \pi]\times [0, \pi]; \mathbb{R})$.
	\end{Ex}
	\vskip 0.1 cm
	\noindent\textbf{Step 1:} \emph{Evolution family and phase space.}
	Let $\mathbb{X}_p=\mathrm{L}^{p}([0,\pi];\mathbb{R})$, for $p\in[2,\infty)$ and the family of operators $\mathrm{A}_p(t)$ defined as $\mathrm{A}_p(t)f(\xi)=a(t)f''(\xi),$ with the domain $\mathrm{D}(\mathrm{A}_p(t))=\mathrm{D}(\mathrm{A}_p)=\mathrm{W}^{2,p}([0,\pi];\mathbb{R})\cap\mathrm{W}_0^{1,p}([0,\pi];\mathbb{R})$. We define the operator $\mathrm{A}_p$ as
	\begin{align*}
	\mathrm{A}_pf(\xi)=f''(\xi), \ \ \xi\in [0,\pi],
	\end{align*}
	with the domain $\mathrm{D}(\mathrm{A}_p)$. Moreover, for  $t\in J$ and $f\in\mathrm{D}(\mathrm{A}_p) $, the operator $\mathrm{A}_p(t)$ can be written as
	\begin{align*}
	\mathrm{A}_p(t)f= \sum_{n=1}^{\infty}(-n^{2}a(t))\langle f, w_{n} \rangle  w_{n},\ f\in \mathrm{D}(\mathrm{A}_p), \text{ with } \langle f,w_n\rangle =\int_0^{\pi}f(\xi)w_n(\xi)\mathrm{d}\xi,
	\end{align*}
	where, $-n^2$($n\in\mathbb{N}$) and $w_n(\xi)=\sqrt{\frac{2}{\pi}}\sin(n\xi)$, are the eigenvalues and the  corresponding normalized eigenfunctions of the operator $\mathrm{A}_p$ respectively. The operator  $\mathrm{A}_p(t)$ fulfills all the conditions (\textit{R1})-(\textit{R4}) of the Assumption \ref{ass2.1} (see, application section of \cite{MTM}).
	Then by applying Lemma \ref{lem2.1}, we obtain the existence of a unique evolution system $\left\{\mathrm{U}_p(t,s):0\leq s\leq t\leq T\right\}$. From Lemma $ \ref{lem2.2}$, one can observe that the evolution system $\left\{\mathrm{U}_p(t,s):0\leq s\leq t\leq T\right\}$ is compact for $t-s>0.$ The evolution system  $\mathrm{U}_p(t,s)$ can be explicitly written as
	\begin{align*}
	\mathrm{U}_p(t,s)f= \sum_{n=1}^{\infty}e^{-n^{2}\int_{s}^{t}a(\tau)\mathrm{d}\tau}\langle f, w_{n} \rangle w_{n}, \text{ for each }  f\in\mathbb{X}_p.
	\end{align*}
	Moreover, we have 
	\begin{align*}
	\mathrm{U}^*_p(t,s)f^*= \sum_{n=1}^{\infty}e^{-n^{2}\int_{s}^{t}a(\tau)\mathrm{d}\tau}\langle f^*, w_{n} \rangle w_{n}, \text{ for each }  f^*\in\mathbb{X}^*_p.
	\end{align*}
	Now, we define the phase space as \begin{align*}
	\mathcal{B}_{g,p}=\left\{\phi:(-\infty, 0]\to\mathbb{X}_p:\text{for}\  r'>0,\ \phi\vert_{[-r', 0]}\in\mathcal{B}_p \text{ and } \int_{-\infty}^{0}\!\!\!\!\!\!g(\theta)\left\|\phi\right\|_{[\theta, 0]}\mathrm{d}\theta<+\infty \right\},
	\end{align*}
	endowed with the norm $$\left\|\phi\right\|_{\mathcal{B}_{g,p}}:=\int_{-\infty}^{0}g(\theta)\left\|\phi\right\|_{[\theta, 0]}\mathrm{d}\theta, \ \text{ for all }\ \phi\in\mathcal{B}_{g,p},$$ where
	$\mathcal{B}_p=\left\{\phi:[-r, 0]\to\mathbb{X}_p, r>0:\ \phi\ \text{is bounded and measurable}\right\}$, provided with the norm $\left\|\phi\right\|_{[-r, 0]}:=\int_{-r}^{0}\left\|\phi(\theta)\right\|_{\mathbb{X}_p}\mathrm{d}\theta$ and $g(\theta)=e^{\nu\theta},\ \theta<0,\nu>0$.
	\vskip 0.1 in
	\noindent\textbf{Step 2:} \emph{Abstract formulation and approximate controllability.}
	Let us define $$x(t)(\xi):=y(t,\xi),\ \text{ for }\ t\in J\ \text{ and }\ \xi\in[0,\pi],$$ and the bounded linear operator $\mathrm{B}:\mathbb{U}\to\mathbb{X}_p$  as
	\begin{align*}
	\mathrm{B}u:=2u_2w_1+\sum_{n=2}^{\infty}u_nw_n, \ \text{ for all } \ u\in\mathbb{U},
	\end{align*}
	where $u_n=(u,w_n)$ and the control space 
	\begin{align*}
	\mathbb{U}:=\left\{u: u=\sum_{n=2}^{\infty}u_nw_n \ \text{with}\ \sum_{n=2}^{\infty}u_n^2< +\infty\right\},
	\end{align*}
	equipped with the norm $\left\|u\right\|_{\mathbb{U}}=\left(\sum_{n=2}^{\infty}u_n^2\right)^2$.
	It is easy to verify that
	\begin{align*}
	\mathrm{B}^*x^*=(2x_1+x_2)w_2+\sum_{n=3}^{\infty}x_nw_n, \ \text{ for all }\ x^*=\sum_{n=1}^{\infty}x_nw_n\in\mathbb{X}^*_p.
	\end{align*}
	We assume the following assumptions on the multi-valued map $\mathrm{F}:J\times\mathbb{R}\multimap\mathbb{R}$. 
	\begin{enumerate}
		\item The function $\mathrm{F}$ is weakly compact and convex valued.
		\item For any $r>0$, the function $\mathrm{F}\left(t,y(t-r)(\xi)\right)$ has a measurable selection for $t\in J$ and each fixed $y\in\mathbb{X}_p,\ p\in[2,\infty)$, where $\xi\in[0,\pi]$.
		\item For any $r>0$, the function $\mathrm{F}\left(t,y(t-r)(\xi)\right)$ is u.h.c in $y\in\mathbb{X}_p,\ p\in[2,\infty)$, for a.e. $t\in J$, where $\xi\in[0,\pi]$.
		\item  There exist a function $\mu\in\mathrm{L}^1(J;[0,+\infty))$ such that $$ \int_{0}^{\pi}\vert \mathrm{F}(t, y(t-r)(\xi))\vert^{p}\mathrm{d}\xi\leq \mu(t), \ \mbox{for a.e.}\ t\in J,\ r>0,\ \xi\in[0,\pi],\ p\in[2,\infty).$$
	\end{enumerate}
	We define the multi-valued map $\mathrm{F}:J\times\mathcal{B}_{g}\multimap\mathbb{X}_p$ such that
	$$\mathrm{F}(t,x_t)(\xi):=\mathrm{F}(t,y(t-r,\xi) ), \mbox{for}\ r>0,\ \xi\in[\pi,0],\ y\in\mathbb{X}_p.$$
	It is clear from the above facts that the multi-valued map $\mathrm{F}$ satisfies the conditions (\textit{H2})-(\textit{H4}) of the Assumption \ref{as3.1}. Next, we define the impulses $I_{k}:\mathbb{X}_p\to\mathbb{X}_p,$ for $k=1,\ldots,m$  as 
	\begin{align*}
	I_{k}(x)(\xi)&=\int_{0}^{\pi}\rho_{k}(\xi, \eta)\cos^{2}(x(\eta))d\eta,\ \mbox{for} \ k=1,\ldots,m,\ \xi\in[0, \pi].
	\end{align*}
	Thus, the impulses  $I_{k}$, for $k=1,\ldots,m$ satisfy the condition (\textit{H5}) of the Assumption \ref{as3.1} (see \cite{SA}). 
	
	The system \eqref{57} can be expressed as an abstract form given in (\ref{eqn:1.1}) by using the above substitutions and it satisfies the Assumption \ref{as3.1}. Moreover, it remains to show  that the linear system corresponding to the system \eqref{eqn:1.1} is approximately controllable. For this, we need to prove that for any $x^*\in\mathbb{X}_p^*$ such that  $\mathrm{B}^*\mathrm{U}_p^*(T,t)x^*=0$ implies $x^*=0$. To complete the proof, we evaluate 
	\begin{align*}
	\mathrm{B}^*\mathrm{U}_p^*(T,t)x^*=\left(2x_1e^{-\int_{t}^{T}a(\tau)\mathrm{d}\tau}+x_2e^{-4\int_{t}^{T}a(\tau)\mathrm{d}\tau}\right)w_2+\sum_{n=3}^{\infty}e^{-n^2\int_{t}^{T}a(\tau)\mathrm{d}\tau}x_nw_n,
	\end{align*}
	for all $x^*=\sum_{n=1}^{\infty}x_nw_n\in\mathbb{X}^*_p$.  It follows that $\left\|\mathrm{B}^*\mathrm{U}_p^*(T,t)x^*\right\|_{\mathbb{U}}=0,$ for certain $t\in J$, implies $x^*=0$.  Hence, by Theorem \ref{thm4.2}, we obtain that the linear system corresponding to \eqref{eqn:1.1} is approximately controllable and the Assumption \ref{ass4.1} (\textit{H0}) holds. Finally, by applying  Theorem \ref{thm4.4}, we can conclude that the semilinear system \eqref{eqn:1.1} (equivalent to the system \eqref{57}) is approximately controllable.
	\section{Conclusions} In this article, we first investigated the existence of a mild solution of the differential inclusion \eqref{eqn:1.1} and then proved the approximate controllability of the problem \eqref{eqn:1.1} via resolvent operator condition and a generalization of the Leray-Schauder fixed point theorem for multi-valued maps. The conditions considered (Assumptions \ref{ass2.1} and \ref{as3.1}) for obtaining the approximate controllability of the problem \eqref{eqn:1.1}  are sufficient only but not necessary.  Moreover, the resolvent operator in Banach spaces is properly defined  and modified the axioms of the phase space to deal with  impulsive functional differential equations and inclusions with infinite delay. Furthermore, we proved the compactness of the operator $h(\cdot)\mapsto \int_{0}^{\cdot}\mathrm{U}(\cdot,s)h(s)\mathrm{d} s : \mathrm{L}^{1}([0,T];\mathbb{Y}) \rightarrow \mathrm{C}([0,T];\mathbb{Y}),$ for integrably bounded sequences in $\mathrm{L}^{1}([0,T];\mathbb{Y})$ ($\mathbb{Y}$ is a general Banach space).
	
	\medskip\noindent
	{\bf Acknowledgments:} The first author would like to thank Council of Scientific and Industrial Research, New Delhi, Government of India (File No. 09/143(0931)/2013 EMR-I), for financial support to carry out his research work and Department of Mathematics, Indian Institute of Technology Roorkee (IIT Roorkee), for providing stimulating scientific environment and resources. M. T. Mohan would  like to thank the Department of Science and Technology (DST), Govt of India for Innovation in Science Pursuit for Inspired Research (INSPIRE) Faculty Award (IFA17-MA110). 
	
\end{document}